\newtheorem{theorem}{Theorem}
\newtheorem{corollary}[theorem]{Corollary}
\newtheorem{lemma}[theorem]{Lemma}
\newtheorem{notation}[theorem]{Notation}
\newtheorem{proposition}[theorem]{Proposition}
\newtheorem{remark}[theorem]{Remark}
\newcommand{\sgn}{{\rm sgn}}
\newcommand{\eps}{\varepsilon}
\newcommand{\eb}{\bar{\mathcal{E}}}
\newcommand{\f}{\bar{f}}
\newcommand{\ff}{f}
\newcommand{\F}{\bar{F}}
\newcommand{\operatorL}{\ensuremath{\mathcal{L}}}
\DeclarePairedDelimiter\abs{\lvert}{\rvert}
\DeclarePairedDelimiter\norm{\lVert}{\rVert}
\newcommand{\dd}{\ensuremath{\mathrm{d}}}
\newcommand{\energy}{\ensuremath{E}}
\newcommand{\step}{\underline}
\newcommand{\leqsim}{\ensuremath{\lesssim}}
\newcommand{\geqsim}{\ensuremath{\gtrsim}}
\newcommand{\dissipation}{\ensuremath{D}}
\newcommand{\torus}{\mathbb{T}}
\newcommand{\NN}{N_{1}}
\newcommand{\depEH}{\scriptscriptstyle{C_{H},C_{E}}}
\newcommand{\depE}{\scriptscriptstyle{C_{E}}}
\newcommand{\sppt}{\text{sppt}}
\newcommand{\R}{\mathbb{R}}
\newcommand{\N}{\mathcal{N}}
\newcommand{\M}{\mathcal{M}}
\newcommand{\E}{\mathcal{E}}
\newcommand{\Ez}{\mathcal{E}_{0}}
\definecolor{darkred}{rgb}{0.9,0.1,0.1}
\def\Xint#1{\mathchoice
{\XXint\displaystyle\textstyle{#1}}%
{\XXint\textstyle\scriptstyle{#1}}%
{\XXint\scriptstyle\scriptscriptstyle{#1}}%
{\XXint\scriptscriptstyle\scriptscriptstyle{#1}}%
\!\int}
\def\XXint#1#2#3{{\setbox0=\hbox{$#1{#2#3}{\int}$}
\vcenter{\hbox{$#2#3$}}\kern-.5\wd0}}
\def\dashint{\Xint-}
\numberwithin{equation}{section}   %equations numbered by chapter
\numberwithin{theorem}{section}
\begin{document}

\title{Metastability of the Cahn--Hilliard equation\\ in one space dimension}
\author{Sebastian Scholtes and Maria G. Westdickenberg}
\newcommand{\Addresses}{{
  \bigskip
  \footnotesize

  	\noindent \textsc{Sebastian Scholtes, RWTH Aachen University}\\
  	\textit{E-mail address:} \texttt{scholtes@math1.rwth-aachen.de}

  	\medskip

	\noindent \textsc{Maria G. Westdickenberg, RWTH Aachen University}\\
  	\textit{E-mail address:} \texttt{maria@math1.rwth-aachen.de}

}}
\date{\today}
\maketitle
\begin{abstract}
We establish metastability of the one-dimensional Cahn--Hilliard equation for initial data that is order-one in energy and  order-one in $\dot{H}^{-1}$ away from a point on the so-called slow manifold with $N$ well-separated layers. Specifically, we show that, for such initial data on a system of lengthscale $\Lambda$, there are three phases of evolution: (1) the solution is
drawn after a time of order $\Lambda^2$ into an algebraically small neighborhood of the $N$-layer branch of the slow manifold, (2) the solution is drawn after a time of order $\Lambda^3$ into an exponentially small neighborhood of the $N$-layer branch of the slow manifold, (3) the solution is trapped for an exponentially long time exponentially close to the $N$-layer branch of the slow manifold. The timescale in phase (3) is obtained with the sharp constant in the exponential.
\end{abstract}
%\tableofcontents

\section{Introduction}\label{S:intro}
Local energy minimizers are the stable states of a gradient flow:  Solutions started at the minimizers are in equilibrium, and solutions started nearby relax towards this equilibrium state.  There are however physical systems that exhibit \emph{metastability}: Solutions appear to be stationary, but are in fact far from any stable state and evolving on an extremely long timescale.  (This behavior is called dynamic metastability to distinguish it from the noise-induced metastability of stochastic systems.)

Two fundamental examples displaying dynamic metastability are the one-dimensional Allen--Cahn equation
\begin{align}
u_t=u_{xx}-G'(u),\qquad x\in \left(-\tfrac{\Lambda}{2},\tfrac{\Lambda}{2}\right),\;t>0,\label{ac}
\end{align}
and the one-dimensional Cahn--Hilliard equation
\begin{align}
u_t=-(u_{xx}-G'(u))_{xx},\qquad x\in \left(-\tfrac{\Lambda}{2},\tfrac{\Lambda}{2}\right),\;t>0,\label{ch}
\end{align}
subject to suitable boundary conditions. Here $G(u)$ is a double-well potential with nondegenerate minima at $\pm 1$ (cf. remark \ref{rem:G}).
Both~\eqref{ac} and~\eqref{ch} are often studied with a small parameter $\eps$ appearing in the equation.  This is equivalent via rescaling to studying~\eqref{ac} and~\eqref{ch} on an interval with $\Lambda\gg 1$, which is the setting that we will consider in this paper.

Both equations represent phenomenological models for the coexistence of two ``pure'' phases $\pm 1$, and the value of the order parameter $u$ indicates the proportion of each phase.
An important physical and mathematical property of the Cahn--Hilliard equation (under appropriate boundary conditions) is that it preserves the mean:
\begin{align*}
  \dashint_{\left(-\frac{\Lambda}{2},\frac{\Lambda}{2}\right)} u(x,t)\,\dd x=\dashint_{\left(-\frac{\Lambda}{2},\frac{\Lambda}{2}\right)}  u(x,0)\,\dd x=:m\qquad\text{for all }t>0.
\end{align*}

Equations~\eqref{ac} and~\eqref{ch} can be derived from the scalar Ginzburg--Landau energy
\begin{align}
E(u):=\int_{\left(-\frac{\Lambda}{2},\frac{\Lambda}{2}\right)}  \frac{1}{2}u_x^2 +G(u)\,\dd x.\label{energy}
\end{align}
Equations~\eqref{ac} and \eqref{ch} are the gradient flows of~\eqref{energy} with respect to the $L^2$ metric and the $\dot{H}^{-1}$ metric, respectively.

The metastability of equation~\eqref{ac} has been well-analyzed; see \cite{FH,CP,BK,C,OR} and the discussion in subsection \ref{ss:lit}. The generic picture can be described in the following way.
For initial data with large regions of positive phase interspersed with large regions of negative phase, it is observed that the solution quickly settles down to a configuration with large regions of $u\approx\pm 1$ that are connected by so-called transition layers. These transition layers are well-approximated by  energy minimizers on $\R$ connecting $\pm 1$ boundary conditions at infinity.  Subsequently the solution appears almost stationary until a time that is of exponential order with respect to the distance between zeros.  Roughly speaking, the collection of states with $N$ optimal transition layers connecting $\pm 1$ forms a slow motion manifold for the system: The system quickly relaxes to the slow manifold and then evolves slowly along it.  However the solution is far from
the final state.  Indeed, suppose that the two closest transition layers are a distance $\ell$ apart. After a time that is exponentially long with respect to $\ell$, these two layers come together and collapse, reducing the energy and producing a state with $N-2$ transitions.  Again the solution remains metastable for an exponentially long time until the next two layers come together, and so on, until the last pair of layers collapses.  Because of the excess energy in the system around the time of a collision, tracking the evolution through collisions requires controlling initial data that is order one away from the slow manifold.

The same basic picture of metastable evolution sketched above for the Allen--Cahn equation holds for the Cahn--Hilliard equation~\eqref{ch}.  It has been proved that solutions of the Cahn--Hilliard equation are metastable for initial data  \emph{sufficiently close to the slow
manifold} \cite{BH,BX1,BX2,G}.
More is true, however:
Numerical studies show that, as for the Allen--Cahn equation, there
is a large class of initial data \emph{order one away} from the slow manifold that  is \emph{drawn into} a small neighborhood of the slow manifold during an initial relaxation stage; see \cite{EF,SW,num} and also figure \ref{fig:min}.  Theorem \ref{t:main1} below establishes this result. Specifically, we show that order one initial data is drawn within time $O(\Lambda^2)$ into an algebraically small neighborhood of the slow manifold and within time $O(\Lambda^3)$ into an exponentially small neighborhood of the slow manifold. Thereafter, the solution is trapped in the exponentially small neighborhood of the slow manifold for an exponentially long time.
\begin{figure}[t!]
    \centering
    \begin{subfigure}[t]{0.5\textwidth}
        \centering
        \includegraphics[height=2in]{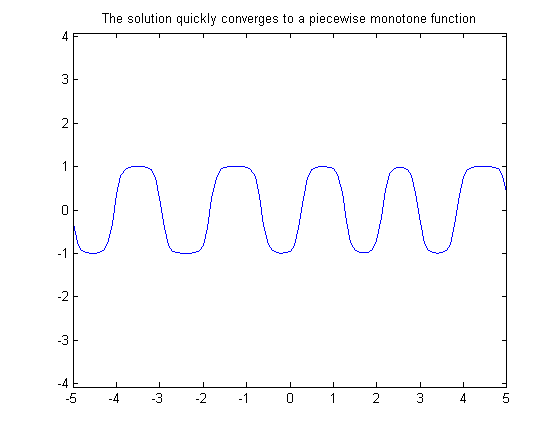}
        \caption{Transition layer structure of the solution.}
    \end{subfigure}%
    ~
    \begin{subfigure}[t]{0.5\textwidth}
        \centering
        \includegraphics[height=2in]{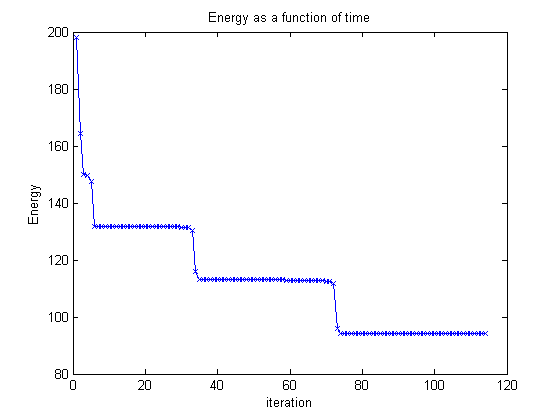}
        \caption{Long energy plateaus.}
    \end{subfigure}
    \caption{The solution of the Cahn-Hilliard equation quickly relaxes to a collection of nearly optimal transition layers.}\label{fig:min}
\end{figure}

A motivation for \cite{OR} was to understand  the features of an energy landscape that lead to metastable dynamics of a gradient flow. By analogy with the finite dimensional case, one expects that the energy landscape should have some sort of convexity transverse to the slow manifold and be fairly flat along it.  These properties are codified in the sufficient conditions for metastability set forth in~\cite{OR} (see also proposition \ref{prop:orweak2}, below), which were in the same paper applied to give a new proof of metastability of the Allen--Cahn equation with the sharp exponential timescale. An energy--energy--dissipation inequality provides the convexity and a Lipschitz condition captures the level grade of the slow manifold.

The fourth order Cahn--Hilliard equation~\eqref{ch} is more complicated than~\eqref{ac} in part because of the absence of maximum principles.  As a result, excess energy can lead to the formation and disappearance of ``spurious'' zeros in the neighborhood of a transition layer, complicating the analysis. Even more important is the absence of a spectral gap for the  problem on $\R$. Although we will see that an energy--energy--dissipation inequality holds on compact intervals (cf. lemma \ref{l:eed}, below), the constant in this inequality depends on the system size and diverges as $\Lambda\to\infty$. Because of this (unavoidable) system-size dependence, a direct application of the buckling argument from \cite{OR} fails to control order-one initial data. Put differently, on the timescale that the method from \cite{OR} would need to show that the energy gap had become small, initial transition layers could already have moved a significant distance.

To overcome this difficulty, we use the so-called relaxation framework introduced in \cite{OW} to control the
initial phase of the evolution, in which the energy gap relaxes from order one to algebraically small as an \emph{algebraic but system-size independent function of time}. Once the energy gap has become sufficiently small, we apply the metastability framework from \cite{OR} in the form of proposition \ref{prop:orweak2}. In terms of the preceding metaphorical description of phase space, this approach can be described as establishing and making use of non-strict convexity away from the slow manifold and strict but system-size dependent convexity near the slow manifold.
Both the relaxation framework and the metastability framework may be useful for other applications including other higher order equations and systems.

\subsection{Setting and main result}\label{ss:setting}
We consider the solution $u$ of the Cahn--Hilliard equation \eqref{ch}
subject to periodic boundary conditions on $\torus_{\Lambda}:=\left(-\frac{\Lambda}{2},\frac{\Lambda}{2}\right]$. One could, with minor modifications, consider the case of Neumann or Dirichlet boundary conditions. To fix ideas, we consider  mean value zero; one could equally well consider any fixed $m\in(-1,1)$.

\begin{remark}\label{rem:G}
 The canonical potential is
\begin{align*}
G(u)=\frac{1}{4}\left(1-u^2\right)^2,
\end{align*}
but $G$ can be any nondegenerate double-well potential; explicitly, we assume:
\begin{itemize}
	\item
		$G$ is $C^2$ and even,
	\item
		$G(u)>0$ for $u\not=\pm 1$ and $G(\pm 1)=0$,
	\item
		$G'(u)\leq 0$ for $u\in [0,1]$ and $G''(\pm 1)>0$.
\end{itemize}
The assumption that $G$ is even could be relaxed.
\end{remark}

%To be added to the discussion: main result and the fact that our result can go through topological changes. We see generation of closeness to $\N_N$ on a time interval $[0,T]$ and, for future times $t\geq T$, the solution becomes close to $\N_{\tilde{N}}$ for some $\tilde{N}\in\{2,\,4,\,\ldots,\, N-2\}$.

We will now explain the set that we will regard as the slow manifold of the evolution.
The collection of so-called alternating-sign energy-optimal profiles with an even number $N$ of well-separated zeros is the set:
\begin{align*}
  \N_N(\ell):&=\Big\{v\in  H^1\left(\torus_{\Lambda}\right) \text{ periodic with $N$ simple zeros }x_1,\ldots, x_N, \text{ such that $|x_{i+1}-x_i|\geq \ell,$}\\
  & v \text{ minimizes the energy on $(x_i,x_{i+1})$},\text{ and $v$ changes sign on consecutive intervals} \Big\}.
\end{align*}
Throughout, distances are interpreted in the periodic sense; i.e., $x_{N+1}:=\Lambda+x_1$.
A function $v\in\N_{N}(\ell)$ is in $C\left(\torus_{\Lambda}\right)\cap H^1\left(\torus_{\Lambda}\right)$ and satisfies
\begin{align}\label{stationary_AC}
	-v_{xx}+G'(v)=0\qquad\text{on}\quad (x_{i},x_{i+1}).
\end{align}
The set $\mathcal{N}_N(\ell)$ comprises the $N$ layer branch of our slow manifold.

The metastable set with which we will work is the set of functions with order one energy and order one $\dot{H}^{-1}$-distance  to some member of the slow manifold:
\begin{align*}
  \mathcal{M}_N(\ell,C_H,C_E):&=\Big\{ u\in H^1\left(\torus_{\Lambda}\right) \text{ periodic with }\dashint u\,\dd x=0, \text{ such that } E(u)\leq C_E,\\
   &\quad\text{ and there exists }\bar{v}\in\N_N(\ell)\text{ such that }\norm{u-\bar{v}}_{\dot{H}^{-1}}^2\leq C_H   \Big\}.
\end{align*}
Above and throughout, integrals are over $\torus_\Lambda$ unless otherwise indicated.

We remark that the finite $\dot{H}^{-1}$ norm of $u-\bar{v}$ implies that $\dashint \bar{v}\,\dd x=\dashint u\,\dd x$ (which we have assumed to be zero).
Note that $u$ and $\bar{v}$ do not have the same zeros, and the $\ell$ in the definition of the set does not refer to the distance between zeros of $u$, but rather to the distance between zeros of $\bar{v}$. It will be important throughout the following, however, that $u$ does have $N$ well-separated zeros and is order one close in $L^2$ to an alternating-sign energy-optimal profile with precisely these zeros; this is the content of the following lemma.
\begin{lemma}\label{l:uv}
	For any $C_H,\,C_E\in (0,\infty)$, there exists $\ell_1\in (0,\infty)$ with the following property.
	For any $\ell\geq\ell_1$ and $u\in\mathcal{M}_N(\ell,C_H,C_E)$, let $\bar{c}$ denote the zeros of an associated $\bar{v}\in \mathcal{N}_N(\ell)$ and $\bar{H}:=\norm{u-\bar{v}}_{\dot H^{-1}}^{2}$.
	Then $u$ has $N$ zeros $c=\{x_1,\ldots,x_N\}$ such that
	\begin{align}\label{ddistancezeroes}
		\abs{c-\bar{c}}\lesssim \bar{H}^\frac{1}{3}+\bar{H}^{\frac{1}{5}}\qquad\text{and}\qquad
		N\lesssim E(u).
	\end{align}
	Moreover, there exists $v\in\mathcal{N}_N(\frac{\ell}{2})$ whose zeros are given by $c$ and such that
	\begin{align}\label{distance_uvc}
		\norm{u-v}_{L^2}^2\lesssim \bar{H}^{\frac{1}{3}}+\bar{H}^{\frac{1}{5}}+\bar{H}^{\frac{1}{2}}\left(\bigl(E(u)\bigr)^{\frac{1}{2}}+1\right).
	\end{align}
\end{lemma}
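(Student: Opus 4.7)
The plan is to combine three ingredients: interpolation between the $\dot{H}^{-1}$ bound $\bar H$ and the $\dot H^1$ bound from the energy (to control $w:=u-\bar v$ in $L^2$); the non-degeneracy $|\bar v'(\bar x_i)|=\sqrt{2G(0)}>0$ at each zero of $\bar v$, which follows from the stationary-profile first integral $\tfrac12\bar v_x^2=G(\bar v)$; and a Modica--Mortola layer-counting lower bound on the energy to obtain $N\lesssim E(u)$.

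To locate zeros of $u$ near each $\bar x_i$, I pass to the periodic antiderivatives $U(x):=\int_{\bar x_1}^{x}u\,\dd y$ and $V(x):=\int_{\bar x_1}^{x}\bar v\,\dd y$ (periodicity follows from mean zero), together with the antiderivative $W$ of $w$, for which $\|W\|_{L^2}^2=\bar H$. Zeros of $u$ are critical points of $U=V+(W-W(\bar x_1))$, while $V$ has a strict quadratic extremum at each $\bar x_i$ with $V(x)-V(\bar x_i)\approx \tfrac{\bar v'(\bar x_i)}{2}(x-\bar x_i)^2$. An elementary comparison of extrema therefore produces a critical point $x_i$ of $U$ satisfying
\[
(x_i-\bar x_i)^2\lesssim |W(x_i)-W(\bar x_i)|.
\]
I then bound the right-hand side two ways: locally via Cauchy--Schwarz $|W(x_i)-W(\bar x_i)|\leq |x_i-\bar x_i|^{1/2}\|w\|_{L^2}$, which after inserting the interpolation $\|w\|_{L^2}^2\leq \bar H^{1/2}\|w\|_{\dot H^1}$ yields the $\bar H^{1/3}$ term; and globally via the Sobolev-type bound $\|W\|_{L^\infty}^2\lesssim \|W\|_{L^2}\|w\|_{L^2}=\bar H^{1/2}\|w\|_{L^2}$, yielding the $\bar H^{1/5}$ term. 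In both cases the $\dot H^1$-norm of $w$ is controlled by $\|w\|_{\dot H^1}^2\lesssim E(u)+E(\bar v)\lesssim E(u)+N$.

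Once $N$ alternating-sign zeros $\{x_i\}$ of $u$ at separation $\geq \ell/2$ are located, the standard Modica--Mortola inequality applied on disjoint intervals around each $x_i$ gives $E(u)\geq Nc_G/2$ (with $c_G=\int_{-1}^1\sqrt{2G(s)}\,\dd s$), closing $N\lesssim E(u)$ and making all previous constants bounded in terms of $C_E,C_H$ alone. I then define $v\in\mathcal{N}_N(\ell/2)$ to be the unique alternating-sign energy-optimal profile with zeros $c=\{x_1,\dots,x_N\}$, the separation $|x_{i+1}-x_i|\geq\ell/2$ being a consequence of $|c-\bar c|$ being small once $\ell_1$ is chosen large. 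For \eqref{distance_uvc} I decompose $\|u-v\|_{L^2}^2\lesssim \|u-\bar v\|_{L^2}^2+\|\bar v-v\|_{L^2}^2$: the first summand is $\lesssim \bar H^{1/2}(E(u)^{1/2}+1)$ by the same $L^2$-interpolation, while the second is $\lesssim N|c-\bar c|\lesssim \bar H^{1/3}+\bar H^{1/5}$, exploiting that $v$ and $\bar v$ differ by $O(1)$ only on a set of measure $\lesssim |c-\bar c|$ near each mismatched zero and by exponentially small tails elsewhere.

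The main obstacle is the sharp distance bound \eqref{ddistancezeroes}: securing the exponents $1/3$ and $1/5$ requires carefully combining the quadratic vanishing of $V$ at its extrema with the two complementary bounds on $W$, and tracking the dependence of constants through a mild bootstrap in which preliminary estimates on $\|w\|_{L^2}$ (depending on $N$) first yield the existence of $N$ zeros of $u$ at a weaker separation, after which the Modica--Mortola step closes $N\lesssim E(u)$ and refines the estimates to the stated form.
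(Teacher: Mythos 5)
Your route via the antiderivatives $U$, $V$, $W$ and comparison of extrema is a genuinely different strategy from the paper's. The paper argues by contradiction: assuming $u$ has no zero in $(-3L,3L)$ around $\bar x_i$ (hence has a fixed sign there), it tests $u-\bar v$ against a cutoff $\eta$ of width $\sim L$ with $|\eta_x|\lesssim L^{-1}$, spending the $\dot H^{-1}$--norm directly in $\dot H^{-1}$--$\dot H^1$ duality,
\begin{align*}
\int_L^{2L}\bar v\,\eta\,\dd x\;\le\;\int(\bar v-u)\,\eta\,\dd x\;\le\;\norm{u-\bar v}_{\dot H^{-1}}\norm{\eta_x}_{L^2}\;\lesssim\;\bar{H}^{1/2}L^{-1/2},
\end{align*}
balanced against the lower bound $\int_L^{2L}\bar v\,\eta\gtrsim L^2$ (if $L$ is within the transition-layer width) or $\gtrsim L$ (if outside), giving $L\lesssim\bar{H}^{1/5}$ and $L\lesssim\bar{H}^{1/3}$ respectively.

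The trouble with your version is that the exponents do not come out as claimed. Starting from your comparison $(x_i-\bar x_i)^2\lesssim\abs{W(x_i)-W(\bar x_i)}$, the local Cauchy--Schwarz route gives $\abs{x_i-\bar x_i}^{3/2}\lesssim\norm{w}_{L^2}\lesssim\bar{H}^{1/4}\norm{w}_{\dot H^1}^{1/2}$, hence $\abs{x_i-\bar x_i}\lesssim\bar{H}^{1/6}\norm{w}_{\dot H^1}^{1/3}$, which is $\bar{H}^{1/6}$, not $\bar{H}^{1/3}$; the global Agmon route gives $(x_i-\bar x_i)^2\lesssim\norm{W}_\infty\lesssim\bigl(\bar{H}^{1/2}\norm{w}_{L^2}\bigr)^{1/2}\lesssim\bar{H}^{3/8}\norm{w}_{\dot H^1}^{1/4}$, hence $\abs{x_i-\bar x_i}\lesssim\bar{H}^{3/16}$, not $\bar{H}^{1/5}$. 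Since $1/6<3/16<1/5$, both are strictly weaker than the asserted bound in the regime $\bar{H}\le 1$, so \eqref{ddistancezeroes} is not established by your computation. The loss is incurred exactly where you detour through $\norm{W}_\infty$ (the Agmon inequality $\norm{W}_\infty^2\lesssim\norm{W}_{L^2}\norm{w}_{L^2}$ already costs a factor of $\norm{w}_{L^2}^{1/2}$) or through pointwise evaluation of $W$ plus Cauchy--Schwarz, whereas the paper pairs $W$ against $\eta_x$ in $L^2$ at the natural lengthscale $L$, which is the sharp way to use $\norm{W}_{L^2}=\bar{H}^{1/2}$. If you want to salvage the antiderivative picture, you would need to replace the pointwise extremum comparison by an averaged one (in effect recovering the paper's test against $\eta$). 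The remaining parts of your argument --- the lower bound $N\lesssim E(u)$ via minimality/Modica--Mortola, the choice of $v\in\N_N(\tfrac{\ell}{2})$ with zeros $c$, and the decomposition $\norm{u-v}_{L^2}^2\lesssim\norm{u-\bar v}_{L^2}^2+\norm{\bar v-v}_{L^2}^2$ --- agree with the paper's proof.
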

We prove lemma \ref{l:uv} in subsection \ref{ss:prelim}. For an explanation of the $\lesssim$ notation, see notation \ref{notation:sim}, below.
\begin{notation}\label{not:cdist}
Above we have introduced the  notation $|c-\bar{c}|$ for $c=(x_1,\ldots,x_N),\, \bar{c}=(\bar{x}_1,\ldots,\bar{x}_N)$ to denote the maximum distance between ``corresponding zeros,'' by which we mean
  \begin{align*}
 |c-\bar{c}|:=   \min\left\{ \max_{i\in 1,\ldots,N}|x_i-\bar{x}_i|\,,\, \max_{i\in 1,\ldots, N} |x_{i+1}-\bar{x}_i|\,,\, \max_{i\in 1,\ldots, N} |x_{i}-\bar{x}_{i+1}| \right\}.
  \end{align*}
  By shifting in $x$, we may  for notational simplicity assume $|c-\bar{c}|=\max_{i\in 1,\ldots,N}|x_i-\bar{x}_i|.$
\end{notation}
\begin{remark}
  Here the existence of $N$ zeros with the given property does not rule out---and is not disturbed by---the existence of additional zeros of $u$. We do not assume that the zeros $c$ associated by lemma \ref{l:uv} to a solution are uniquely determined or continuous in time. However we will show that $u$ has precisely $N$ zeros (and hence, that $c$ and $v$ are uniquely defined and continuous in time) after an initial relaxation stage; cf. theorem \ref{t:main1}.
\end{remark}

\uline{Throughout the paper} we will for  $u\in\M_N(\ell,C_H,C_E)$ let $c$ and $v$ denote the zeros and a corresponding energy-optimal profile, respectively, as in lemma \ref{l:uv}. Whereas $\bar{v}$ is constant in time, $c$ and $v$ depend on time.
We introduce the $v$-related quantities
\begin{align*}
  \E(t):=E\bigl(u\bigr)-E\bigl(v\bigr)\qquad\text{and}\qquad f(t):=u-v
\end{align*}
as well as the $\bar{v}$-related quantities
\begin{align*}
  \eb(t):=E\bigl(u\bigr)-E(\bar{v}),\qquad\f(t):=u-\bar{v},\qquad\text{and}\qquad  \bar{H}(t):=\norm{u-\bar{v}}_{\dot{H}^{-1}}^2.
\end{align*}
We define in addition the dissipation
\begin{align*}
  D(t):=\int \biggl(\Bigl(-u_{xx}+G'\bigl(u\bigr)\Bigr)_x\biggr)^2\,\dd x
\end{align*}
as the negative of the time-derivative of the energy of $u$.
Finally, we define the constant
\begin{align}
  C_G:=\sqrt{G''(\pm 1)},\label{cg}
\end{align}
which gives the sharp exponential timescale of metastability (see \eqref{expt}).
Our main result is:
\begin{theorem}\label{t:main1}
	We fix the number of zeros $N$ and will not track $N$-dependence below.
  Consider $C_H,\,C_E,\,C_1\in (1,\infty)$. Let $C_{\mathrm{ed}}$ be the constant from lemma \ref{l:eed}. There exists $\Lambda_1\in(0,\infty)$ such that for any $\Lambda\geq \Lambda_1$ and
  \begin{align}
    \ell_0\geq \frac{\Lambda}{C_1},\quad \bar{H}_0\leq C_H,\quad E_0\leq C_E\label{ellbig}
  \end{align}
  the following holds true.
 The solution $u(t)$ of the Cahn--Hilliard equation \eqref{ch} with initial data $u_0\in \M_N(\ell_0, \bar{H}_0,E_0)$ satisfies the following. For
 \begin{align}
   \delta :=\Lambda^{-\frac{1}{2}}\exp(-C_G\ell(0))
   %\exp\left(C\left(\bar{H}_0+\Ez^{3}+1\right)^{\frac{1}{2}}\right)
   \quad\text{ and for all }\quad t\leq \delta^{-1},\label{expt}
 \end{align}
the solution is in $\M_N(\ell_0, \bar{H}(t),E_0)$ and $\bar{H}(t)\lesssim 1$. Moreover the following phases of the evolution exist.
%\begin{align}
%  \E(t)&\lesssim\min\left\{\Ez, \frac{\bar{H}_0+\Ez^{3}+1}{\Lambda^{2}}\exp\left(-\frac{2(t-T_{*})}{C_{\mathrm{ed}}\Lambda^2}\right)+\delta \left(\bar{H}_0+\Ez^{3}+1\right)^{\frac{1}{2}}\right\},\notag\\
%   \bigl(c(t)-\bar{c}\bigr)^2&\lesssim \bar{H}_0+\Ez^{3}+\Ez+\frac{\bar{H}_0+\Ez^{3}+1}{\Lambda},\label{ct0}\\
%  \bar{H}(t)&\lesssim \bar{H}_0+\Ez^{3}+1,\label{hbd}
%\end{align}
Below  $c(t)$ and $\bar{c}$ denote the zeros of $u$ and $\bar{v}$ as in lemma \ref{l:uv}, $\ell(0)$ is the minimal distance between the zeros in $c(0)$, $v\in\N_N(\frac{\ell_0}{2})$ is any energy-optimal profile associated to $u$, and $\Ez$ denotes the initial energy gap.
%
%In particular, there are times $s_1$ and $s_2$ and a function $v(t)\in\N_N$ such that
\begin{enumerate}
  \item[(i)] There exists $s_1\lesssim  (\bar{H}_{0}+\Ez+\Ez^{3})\Lambda^2$ such that on $[0,s_1]$ there holds
\begin{align}
 \norm{u(t)-v(t)}_{H^1}^2+ \E(t)&\lesssim \min\Big\{\Ez,\,\frac{\bar{H}_0+\Ez+\Ez^3}{t} \Big\},\label{egap1}\\
  (c(t)-\bar{c})^2&\lesssim \left(\bar{H}_0+\Ez+\Ez^3\right)^\frac{1}{2}\E(t)^\frac{1}{2},\notag\\
  \bar{H}(t)&\lesssim\bar{H}_0+\Ez+\Ez^3,\notag
\end{align}
  so that in particular $u$ is algebraically close to $\N_N(\frac{\ell_{0}}{2})$ at $s_1$ in the sense that
  \begin{align}
    \norm{u(s_1)-v(s_1)}_{H^1}^2+\E(s_1)\lesssim \Lambda^{-2}.\label{uve2}
  \end{align}
%  and changes on $[0,s_1]$ are controlled via
%  \begin{align*}
%    \sup_{t\in[0,s_1]}\abs{c(t)-\bar{c}}\lesssim \big(\bar{H}_0+\Ez+\Ez^3\big)^{\frac{1}{2}}\Ez^{\frac{1}{2}},\quad \sup_{t\in[0,s_1]}\norm{u(t)-\bar{v}}_{\dot{H}^{-1}}^2\lesssim \bar{H}_0+\Ez+\Ez^3.
%  \end{align*}
  \item[(ii)] On $[s_1,\delta^{-1}]$ there holds
 \begin{align}
  \norm{u(t)-v(t)}_{H^1}^2+ \E(t)&\lesssim \Lambda^{-2}\exp\left(\frac{-2t}{C_{\mathrm{ed}}\Lambda^2}\right)+\delta,\label{t.2bb}\\
   \abs{c(t)-c(s_1)}&\lesssim \Lambda^{-\frac{1}{2}},\label{t.4b}\\
   \norm{u(t)-u(s_1)}_{\dot{H}^{-1}}&\lesssim 1\label{t.5b},
 \end{align}
 and on $[s_1+1,\delta^{-1}]$ the solution has exactly $N$ simple zeros and the dissipation is of order $\Lambda^{-2}$.
  Above the constants in $\lesssim$ may depend on $C_H,C_E$, and $C_1$. Below all constants are universal.

%  Moreover $u$ is exponentially close to $\N_N(\frac{\ell_{0}}{2})$ at $s_2$ in the sense that
%  \begin{align}
%    \norm{u(s_2)-v(s_2)}_{H^1}^2+\E(s_2)\lesssim \Lambda^2\delta^2.\label{t.fb}
%  \end{align}
%  and changes on $[s_1,s_2]$ are controlled via
%  \begin{align*}
%      \sup_{t\in[s_1,s_2]}\abs{c(t)-c(s_1)}\lesssim \frac{\bar{H}_0+\Ez^3+1}{\Lambda},\quad \sup_{t\in[s_1,s_2]}\norm{u(t)-u(s_1)}_{\dot{H}^{-1}}^2\lesssim \bar{H}_0+\Ez^3+1.
%  \end{align*}
  \item[(iii)] There exists $s_2\lesssim C(C_{H},C_{E},C_{1})\ell(0)\Lambda^2$ such that on the exponentially long time interval $[s_2,\delta^{-1}]$
  the solution is exponentially close to $\N_N(\frac{\ell_{0}}{2})$ and slowly evolving in the sense that
  \begin{align}
     \norm{u(t)-v(t)}_{H^1}^2+\E(t)&\lesssim \Lambda^2\delta^2,\label{uve1}\\
    \abs{c(t)-c(s)}&\lesssim \ell(0)^{-\frac{1}{2}}\delta\left(\abs{t-s}+\Lambda^{2}\right)\label{cest},\\
    \norm{u(t)-u(s)}_{\dot{H}^{-1}}&\lesssim \delta(\abs{t-s}+\Lambda^2),\label{uest}
  \end{align}
  and the dissipation is exponentially small of order $\Lambda^{2}\delta^{2}$.
\end{enumerate}
\end{theorem}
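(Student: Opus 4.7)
The plan is to establish the three phases in turn, joining them at time $s_{1}$ via two complementary analytical frameworks. Phase (i) uses the relaxation framework of Otto--Westdickenberg to drive $\E$ from order one down to the $\Lambda^{-2}$ threshold required to enter the Otto--Reznikoff regime, without introducing any $\Lambda$-dependent constants. Phases (ii) and (iii) then invoke the metastability framework of Otto--Reznikoff in the form of Proposition \ref{prop:orweak2}, whose EED hypothesis (Lemma \ref{l:eed}) becomes applicable only once $\E$ is below this threshold.

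For phase (i), the core step is a dissipation estimate of the schematic form $\E^{2} \lesssim (\bar{H} + \E + \E^{3})\,D$, valid on $\M_{N}$. Combined with $\dot{E} = -D$, this yields the differential inequality $\dot{\E} \lesssim -\E^{2}/(\bar{H} + \E + \E^{3})$, whose integration produces the $1/t$ decay in \eqref{egap1}. In parallel, $\bar{H}$ obeys a near-Lyapunov identity with a source driven by $E(v) - E(\bar{v})$, propagating $\bar{H}(t) \lesssim \bar{H}_{0} + \Ez + \Ez^{3}$. Once $\E(t)$ and $\bar{H}(t)$ are controlled at the scalar level, the bound on $(c(t)-\bar{c})^{2}$ follows directly from Lemma \ref{l:uv}, and the $H^{1}$-closeness of $u$ to $v$ is a standard consequence of small energy gap relative to an energy-optimal profile with prescribed zeros. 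The choice $s_{1} \sim (\bar{H}_{0} + \Ez + \Ez^{3})\Lambda^{2}$ is calibrated so that $\E(s_{1}) \lesssim \Lambda^{-2}$, as required by \eqref{uve2}.

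For phases (ii) and (iii), the EED inequality $D \gtrsim \E/(C_{\mathrm{ed}}\Lambda^{2})$ becomes available once $\E \lesssim \Lambda^{-2}$. Combined with $\dot{E} = -D$, it yields $\dot{\E} \leq -\E/(C_{\mathrm{ed}}\Lambda^{2})$, whose integration gives the exponential decay \eqref{t.2bb}; the additive $\delta$ represents the intrinsic exponential ``floor'' of the slow motion along $\N_{N}$. The displacement bounds \eqref{t.4b}, \eqref{t.5b}, \eqref{cest}, and \eqref{uest} follow by integrating $\sqrt{D}$ in time via Cauchy--Schwarz, exploiting both $\norm{u_{t}}_{\dot{H}^{-1}}^{2} = D$ and the Lipschitz control of the zero motion afforded by Lemma \ref{l:uv}. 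Defining $s_{2}$ as the first time when $\E(t) \lesssim \Lambda^{2}\delta^{2}$ yields $s_{2} \lesssim \ell(0)\Lambda^{2}$ and opens phase (iii), in which every estimate is slaved to the now-exponentially-small dissipation, establishing \eqref{uve1} through \eqref{uest}.

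The main obstacle is the transition between frameworks at $s_{1}$: Otto--Reznikoff requires a priori closeness to $\N_{N}$, while Otto--Westdickenberg produces only scalar bounds. Lemma \ref{l:uv} supplies the bridge, extracting from the scalar smallness of $\bar{H}$ and $\E$ a concrete energy-optimal profile $v$ whose zeros coincide with $N$ zeros of $u$ and lie near $\bar{c}$. A secondary subtlety is that spurious zeros of $u$ may exist during the order-one portion of phase (i); the lemma tolerates them by identifying only $N$ ``correct'' zeros. The assertion in phase (ii) that $u$ has exactly $N$ simple zeros for $t \geq s_{1} + 1$ then requires a short-time parabolic smoothing argument: once $\norm{u-v}_{H^{1}}$ is $\Lambda^{-1}$-small, any additional zeros would contradict the energy budget and must dissipate on a parabolic timescale, which fits comfortably inside the unit time allowance.
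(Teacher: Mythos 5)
Your high-level plan matches the paper's architecture: relaxation framework on $[0,s_1]$, then metastability framework afterwards, with Lemma~\ref{l:uv} as the bridge. But there are three substantive gaps.

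\textbf{No buckling argument.} Your schematic estimate $\E^2 \lesssim (\bar H + \E + \E^3)\,D$ hides that the true algebraic inequality \eqref{L1b.4} contains the factor $(|c-\bar c|+1)^2$, i.e. the a priori unknown displacement of the zeros. To close the relaxation framework one must control $c_*:=\sup_t|c(t)-\bar c|+1$, $\bar H(t)$, and the length scale $\ell(t)$ simultaneously, which the paper does via the stopping times $T_1$, $T_2$, $T_3$ in \eqref{t1}--\eqref{t3} and a buckling argument that shows these stopping times are not reached before $T_*$. Without that apparatus the relations you quote do not constitute a closed ODE system; in particular, your bound on $(c-\bar c)^2$ comes from \eqref{L3.3b} of the ODE Lemma~\ref{l:diffeq}, not directly from Lemma~\ref{l:uv} (which only supplies the much cruder $|c-\bar c|\lesssim\bar H^{1/3}+\bar H^{1/5}$).

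\textbf{The $\delta^2$-level trapping in phase (iii) is not reachable by the argument you sketch.} Proposition~\ref{prop:orweak2} (no integrability assumption) has a $\delta$-floor, not a $\Lambda^2\delta^2$-floor; since $\Lambda^2\delta^2\ll\delta$, defining $s_2$ ``as the first time when $\E\lesssim\Lambda^2\delta^2$'' is circular. The paper obtains \eqref{uve1} through a postprocessing step: it first applies Proposition~\ref{prop:orweak2} to reach the $\delta$-level, then establishes that $u$ has precisely $N$ simple zeros on $[s_1+1,\delta^{-1}]$ (so that $t\mapsto E(v(t))$ is measurable, hence integrable), and only then invokes the sharper Proposition~\ref{prop:orweak}, whose floor is $\delta^2$.

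\textbf{The ``simple zeros'' argument is wrong in mechanism.} You claim that ``any additional zeros would contradict the energy budget and must dissipate on a parabolic timescale.'' Small energy gap alone does not exclude spurious zeros: small-amplitude excursions near $\pm 1$ that cross zero can coexist with order-$\Lambda^{-2}$ excess energy if the discrepancy $\xi=\tfrac12 u_x^2-G(u)$ is not controlled. The paper needs a dissipation bound, not just an energy bound: Lemma~\ref{l:diss1} gives the Gronwall inequality $\tfrac{\dd}{\dd t}D\lesssim D$ (under $\|u-v\|_\infty$ small), Lemma~\ref{l:diss3} uses integration of $\dd\eb/\dd t=-D$ over a unit time window to start that Gronwall with small initial $D$, and Lemma~\ref{l:diss2} bounds $\|\xi\|_\infty$ in terms of $D$, which forces $u$ to return to near $\pm 1$ between zeros; only then does the Modica--Mortola energy budget rule out more than $N$ crossings. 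Also, note the Cahn--Hilliard equation is fourth order; the ``parabolic timescale'' smoothing you invoke is not what produces the unit-time delay--the delay is the unit window needed by Lemma~\ref{l:diss3} to extract a small value of $D$ from the small value of $\eb$.

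Finally, a smaller point: the reason phase (ii) needs to wait for $\E\lesssim\Lambda^{-2}$ is not that Lemma~\ref{l:eed} ``becomes applicable'' only then (it applies to all of $\M_N$); rather, the system-size-dependent constant $C_{\mathrm{ed}}\Lambda^2$ in the EED would require an $O(\Lambda^2\ln\Lambda)$ timescale to shrink an order-one energy gap, during which the transition layers could move too far for the buckling argument to close. The relaxation framework in phase (i) precisely avoids this by producing a $\Lambda$-independent relaxation rate.
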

\begin{remark}
	In the proof we choose the lengthscale $\ell_0$ between zeros of $\bar{v}$ sufficiently large with respect to $C_H,\,C_E$ so that the lengthscale $\ell(0)$ between zeros of the initial data is of the same order, i.e., $\ell(0)\sim \ell_{0}$; see \eqref{scaleell}.
\end{remark}
\begin{remark}
	Clearly the results of the theorem \ref{t:main1} also hold up to any time of order $\delta^{-1}$ but for simplicity, we have stated the result for $t\leq\delta^{-1}$.
\end{remark}

\begin{remark}\label{rem:shortcoming}
  There are at least two (related) ways in which one could hope to improve the result.
    One can improve the assumption \eqref{ellbig}, via an iterative argument, to $\ell_0\sim \Lambda^{\frac{1}{k}}$ for $k\in \mathbb{N}$, at the cost of a $k$-dependence in the scaling bounds. However in principle one expects the result to hold as long as $\ell$ is sufficiently large, independent of the system size. Unfortunately such an estimate is out of reach via our current method. The second limitation is our assumption that $\bar{H}_0$ is order one. Generically, before a collision, the $\dot{H}^{-1}$ distance to the slow manifold with $N-2$ layers scales with $\ell$. These issues motivate work in progress \cite{OSW}, which offers new insight into the relaxation problem on the line.
\end{remark}

We are particularly interested in placing only weak conditions on the initial data. However  in the case of  initial data that are already close to the slow manifold, a direct application of the metastability framework yields a lower bound on the exponential timescale.
\begin{corollary}\label{cor:wid}
For initial data that are well prepared in the sense that
$
\Ez\lesssim\Lambda^{-2},
$
not much happens for a long time in the sense that
\begin{align*}
  \norm{u(t)-u(0)}_{\dot{H}^{-1}}^{2}\lesssim \Lambda\Ez^{\frac{1}{2}}+\delta \left(t+\Lambda^{2}\right).
\end{align*}
In particular, there exists $\eps>0$ such that $\Ez\leq\eps \Lambda^{-2}$ implies
\begin{align*}
  \inf\left\{t>0\colon \norm{u(t)-u(0)}_{\dot{H}^{-1}}=1 \right\}\gtrsim \delta^{-1}\sim \Lambda^\frac{1}{2}\exp\bigl(C_G\ell(0)\bigr).
\end{align*}
\end{corollary}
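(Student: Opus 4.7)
The plan is to combine two observations specific to well-prepared data. First, the Cahn--Hilliard equation is the $\dot{H}^{-1}$ gradient flow of $E$, so $\norm{u_t}_{\dot{H}^{-1}}^2=D(t)$ and hence
\begin{align*}
    \norm{u(t)-u(0)}_{\dot{H}^{-1}} \leq \int_0^t \sqrt{D(s)}\,\dd s.
\end{align*}
Second, when $\Ez\lesssim \Lambda^{-2}$ the initial data already sit in the regime of Proposition \ref{prop:orweak2}, so the relaxation stage (phase (i)) is unnecessary and the energy--energy--dissipation inequality $D\gtrsim \Lambda^{-2}\E$ from Lemma \ref{l:eed} is available along the entire trajectory.

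With these in hand I would split the integral at the time $s_2\lesssim \ell(0)\Lambda^2$ by which phase (iii) has been reached. On $[0,s_2]$ the identity $\sqrt{D}=D/\sqrt D$ combined with EED gives
\begin{align*}
  \sqrt D \;\lesssim\;\Lambda\,\frac{D}{\sqrt\E}\;=\;-\Lambda\,\frac{\dot\E}{\sqrt\E},
\end{align*}
which integrates to $\int_0^{s_2}\sqrt D\,\dd s \lesssim \Lambda(\sqrt\Ez-\sqrt{\E(s_2)}) \lesssim \Lambda\sqrt\Ez$. On $[s_2,t]$ the phase-(iii) estimate \eqref{uest} supplies $\norm{u(t)-u(s_2)}_{\dot{H}^{-1}}\lesssim \delta(t+\Lambda^2)$. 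Combining via the triangle inequality together with $(a+b)^2\leq 2(a^2+b^2)$ produces
\begin{align*}
  \norm{u(t)-u(0)}_{\dot{H}^{-1}}^2 \;\lesssim\; \Lambda^2\Ez+\delta^2(t+\Lambda^2)^2.
\end{align*}
The form stated in the corollary follows from $\Lambda^2\Ez\leq \Lambda\sqrt\Ez$ (a consequence of $\Ez\lesssim \Lambda^{-2}$) and $\delta^2(t+\Lambda^2)^2\lesssim \delta(t+\Lambda^2)$, which holds for $t\leq\delta^{-1}$ and $\delta\Lambda^2\lesssim 1$, both guaranteed for $\Lambda\gg 1$ given $\ell(0)\gtrsim \Lambda$.

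For the second assertion I would fix $\eps$ so small that $\Ez\leq\eps\Lambda^{-2}$ forces $\Lambda\sqrt\Ez\leq\tfrac{1}{4}$; then $\norm{u(t)-u(0)}_{\dot{H}^{-1}}^2=1$ can only occur if $\delta(t+\Lambda^2)\gtrsim 1$, which together with $\delta\Lambda^2\lesssim 1$ forces $t\gtrsim\delta^{-1}\sim \Lambda^{1/2}\exp(C_G\ell(0))$. The main technical obstacle is justifying that $\Ez\lesssim\Lambda^{-2}$ alone (with $\bar H_0$ still only bounded by $C_H$) suffices to apply Proposition \ref{prop:orweak2} directly from $t=0$---so that the EED inequality is genuinely available uniformly in time while the slow-manifold profile $v(t)$ is itself drifting---whereas the ODE-style rewrite is harmless at $\E=0$ since $2\sqrt{\E}$ is continuous, and once the global EED is secured the remainder of the argument is a clean combination of the gradient-flow identity and the phase-(iii) bound \eqref{uest}.
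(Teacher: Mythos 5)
Your route is genuinely different from the paper's. The paper's proof of the corollary is a minor modification of step~2 of the proof of theorem~\ref{t:main1}: one applies proposition~\ref{prop:orweak2} in the rescaled coordinates $\tilde{t}=t/(C_{\mathrm{ed}}\Lambda^2)$, $\tilde{E}=C_{\mathrm{ed}}\Lambda^2 E$, now starting directly from $t=0$ (so $\tilde{e}_0 = C_{\mathrm{ed}}\Lambda^2\Ez\lesssim 1$), and keeps $\tilde{e}_0$ explicit in~\eqref{13.s2} and~\eqref{diffu}. Squaring~\eqref{13.s2} and undoing the rescaling produces $\Lambda^2\Ez + \Lambda\Ez^{1/2}\lesssim\Lambda\Ez^{1/2}$ for the initial layer, while~\eqref{diffu} gives the $\delta t$ piece. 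You instead go through the gradient-flow identity $\norm{u_t}_{\dot{H}^{-1}}^2=D$ and the energy--energy--dissipation inequality, converting $\int_0^{s_2}\sqrt{D}$ into $\Lambda\bigl(\sqrt{\Ez}-\sqrt{\E(s_2)}\bigr)$, then handing off to~\eqref{uest}. This is a reasonable and more elementary-looking plan, and the exponents do line up.

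However, there is a genuine gap in the middle step. You write $\sqrt{D}\lesssim\Lambda D/\sqrt{\E}=-\Lambda\dot{\E}/\sqrt{\E}$, but the identity $D=-\dot{\E}$ is false. The quantity $\E(t)=E(u(t))-E(v(t))$ is defined relative to the \emph{moving} optimal profile $v(t)$, so $\dot{\E}=-D-\frac{\dd}{\dd t}E\bigl(v(t)\bigr)$, and the last term need not vanish or even have a sign; indeed $t\mapsto E(v(t))$ is not a priori differentiable or even continuous (this is precisely the remark after lemma~\ref{l:uv} and the reason proposition~\ref{prop:orweak2} was formulated without an integrability assumption on $E(v(t))$). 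The exact relation is~\eqref{L2b.1}: $\dot{\eb}=-D$ for $\eb=E(u)-E(\bar v)$ with $\bar v$ \emph{fixed}. Rewriting your chain with $\eb$ in place of $\E$ then hits two further obstructions that you have not addressed: the EED inequality~\eqref{eed} is stated for $\E$, not $\eb$, and the two differ by an error $\lesssim\exp(-C\ell)$ (lemma~\ref{L1b},~\eqref{L1b.5}) whose exponential rate $C$ is \emph{not} sharp and is in general strictly smaller than $C_G$; and $\eb$ is only guaranteed nonnegative up to the same exponentially small error, so the square root in $-\dot{\eb}/\sqrt{\eb}$ is not obviously well defined for all times in $[0,s_2]$. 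Your closing remark about ``$2\sqrt{\E}$ is continuous at $\E=0$'' addresses a different and benign issue (a removable singularity in the antiderivative) but not this one. These points are likely repairable -- e.g.\ by running the ODE argument with $\eb$ on the time set where $\eb\gtrsim\exp(-C\ell)$ and bounding the complement by Cauchy--Schwarz using $s_2\lesssim\Lambda^3$, then checking that $\Lambda^{3/2}\exp(-C\ell/2)$ is absorbed by $\bigl(\delta\Lambda^2\bigr)^{1/2}$ once $\ell\sim\Lambda$ is large -- but as written the proof contains an incorrect identity at its core, and the fix is exactly the bookkeeping that proposition~\ref{prop:orweak2} is designed to package.
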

The theorem and corollary are proved in subsection \ref{ss:main}.
\begin{notation}\label{notation:sim}
  Throughout the paper we use the notation
  \begin{align*}
    A\lesssim B
  \end{align*}
  if there exists a universal constant $C\in(0,\infty)$ depending at most on the potential $G$, such that
  $
    A\leq C\,B
  $
  for $\ell$ and/or $\Lambda$ large (the relevant case being clear from the context). In the case of nonuniversal constants, such as dependence on $C_E$ in lemma \ref{L1b}, we will indicate this by, for example, notation such as $\lesssim_{C_E}$ or an explicit remark, as in theorem \ref{t:main1}.
  Moreover, we occasionally write
  \begin{align*}
  	A\ll B
  \end{align*}
  if for every $c>0$ there is an $L>0$ such that for $\ell\geq L$ or $\Lambda\geq L$ we have that
  $
  	A\leq c B.
  $

  We occasionally use $C$ to represent a constant whose definition may change from line to line and for exponentially small terms when we are not interested in a sharp constant.
\end{notation}
 \subsection{Previous results in the literature}\label{ss:lit}
Because some of the same methods were used for the Cahn--Hilliard equation, we begin with a brief summary of the analysis of the Allen--Cahn equation. Metastability of the Allen--Cahn equation was explored in the seminal works of Carr and Pego \cite{CP} and Fusco and Hale \cite{FH}. In \cite{CP}, a careful spectral analysis is used to establish, moreover, that initial data starting exponentially close to an appropriately defined slow manifold stays exponentially close for a time that is of exponential order in $\ell$ (the distance between the two closest zeros)---with the optimal constant $C_G$.

Subsequently, Bronsard and Kohn introduced a natural and elementary energy method that reduced the restriction on the initial data---it was only required to be algebraically close to the slow manifold---albeit at the expense of weakening the result---the solution is only shown to be trapped close to the slow manifold for an algebraically long time. The later analysis by Chen \cite{C} was exhaustive: A complete characterization of the evolution was established, including the sharp exponential constant for the exponentially slow phase; in this work, classical PDE-techniques (including maximum principles) are used. In \cite{OR}, an abstract metastability framework is introduced and used, together with a buckling argument, to give a different proof of the fact that, for initial data that is order one away from the slow manifold, exponential closeness to the slow manifold is \emph{generated} and subsequently \emph{propagated} for an exponentially long time, again with the sharp constant $C_G$ in the exponential.

Turning to the Cahn--Hilliard equation, Bronsard and Hilhorst \cite{BH} apply the method of \cite{BK} to the one-dimensional Cahn--Hilliard equation to show that initial data algebraically close to the slow manifold remains trapped nearby for an algebraically long time. In another application of the method of \cite{BK}, Grant \cite{G} constructs solutions of Cahn-Morral systems that remain close to the slow manifold for an exponential period of time.
Using the method of spectral estimates from \cite{ABF}, Bates and Xun \cite{BX1,BX2} were able to show that for initial data
algebraically close to the slow manifold (roughly speaking, that the $H^{1}$ norm of $u-v$ is sufficiently small with respect to $\Lambda^{-3}$, in our notation), the solution remains trapped for an
exponentially long time, with almost the sharp constant in the exponential.

\section{Method and proof of main result}
As explained in the introduction, we will use the relaxation framework from \cite{OW} to control the initial phase of energy relaxation and the metastability framework of \cite{OR} to control the second phase of energy relaxation and establish slow motion. We begin by explaining these tools, deferring the proofs. Then in subsection \ref{ss:main}, we show how to combine these tools to prove the main theorem.

In both phases, an important role will be played by the following nonlinear energy and dissipation estimates.
\begin{lemma}\label{l:eedraw}
	For any $C_H,\,C_E\in (0,\infty)$, there exists  $\ell_1\in (0,\infty)$ such that, for any $\ell\geq \ell_1$, $N\in\mathbb{N}$, and $u\in \M_N(\ell,C_H,C_E)$, there holds:
	\begin{align}
		\norm{u-v}_{H^1}^2&\lesssim_{\depEH}\E\lesssim_{\depE} \norm{u-v}_{H^1}^2,\label{L1b.1}\\
		\norm{u-v}_{\dot{H}^1}^2 &\lesssim_{\depEH}   D.\label{L1b.2}
\end{align}
	Moreover, there is a $\gamma>0$ such that for $\E(u)\leq \gamma$  \eqref{L1b.1} and \eqref{L1b.2} hold with  constants that are independent of $C_{H}$ and $C_{E}$.
\end{lemma}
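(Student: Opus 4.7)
The plan is to Taylor-expand $E(u) = E(v+f)$ with $f := u-v$ around $v$ and to exploit both that $v$ satisfies the Euler--Lagrange equation $-v_{xx}+G'(v)=0$ on each interval $(x_i,x_{i+1})$ and that lemma \ref{l:uv} provides the Dirichlet condition $f(x_i)=0$ at the shared zeros. A piecewise integration by parts---whose boundary terms vanish by the Dirichlet condition, regardless of any jumps of $v_x$ across the $x_i$---gives the identity
\[
\E \;=\; \tfrac12 Q(f) + \int R_3(v,f)\,\dd x, \qquad Q(f) \;:=\; \int f_x^2 + G''(v) f^2 \,\dd x,
\]
with $|R_3|\lesssim |f|^3$ once $v$ and $f$ are in a bounded range. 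The energy bound $E(u)\leq C_E$ together with $H^1(\torus_\Lambda)\hookrightarrow L^\infty$ gives such a bound on $u$, and lemma \ref{l:uv} transfers it to $v$ and $f$; then the upper bound $\E\lesssim_{\depE}\|f\|_{H^1}^2$ is immediate from pointwise bounds on $G''(v)$ and $R_3/|f|$.

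The lower bound rests on uniform-in-$\ell$ coercivity of $Q$ on admissible $f$. The crucial observation is that the Dirichlet conditions $f(x_i)=0$ freeze exactly the translational near-zero modes of $v$ that are responsible for the exponentially small spectral gap of $-\partial_{xx}+G''(v)$ on the full torus. On each interval $(x_i,x_{i+1})$, the profile $v|_{(x_i,x_{i+1})}$ is the (isolated) Dirichlet energy-minimizing half-period of the Allen--Cahn orbit, and so the first Dirichlet eigenvalue of $-\partial_{xx}+G''(v)$ on that interval is bounded below by a universal positive constant independent of interval length. Summing over intervals and combining with the Gårding inequality $Q\geq \|f_x\|^2 - C\|f\|_{L^2}^2$ yields $Q(f)\gtrsim \|f\|_{H^1}^2$ with a universal constant. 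The cubic remainder $\int R_3\,\dd x$ is then absorbed: when $\E\leq \gamma$ is small, $\|f\|_{L^\infty}\ll 1$ and $|R_3|\leq Q/4$ with universal constants; for general $u\in\M_N(\ell,C_H,C_E)$, lemma \ref{l:uv} bounds $\|f\|_{L^2}$ (hence, via $H^1\hookrightarrow L^\infty$, $\|f\|_{L^\infty}$) in terms of $C_H, C_E$, and absorption succeeds with $C_H, C_E$-dependent constants.

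For the dissipation estimate, the same cancellation yields the distributional identity $-f_{xx}+G'(u)-G'(v)=w$ with $w:=-u_{xx}+G'(u)$ on $\torus_\Lambda$. Testing with $f$ and expanding $G'(u)-G'(v)=f\int_0^1 G''(v+sf)\,\dd s$ gives $\int f_x^2 + \int(G'(u)-G'(v))f\,\dd x = \int wf\,\dd x$, whose left-hand side is $Q(f)$ plus a cubic-in-$f$ correction and is therefore $\gtrsim \|f\|_{H^1}^2$. On the right-hand side, $\bar f = 0$ and $\bar w = \overline{G'(u)-G'(v)}$ (from $\int f_{xx}=0$) allow one to replace $w$ by $w-\bar w$ and integrate by parts against the mean-zero primitive of $f$, yielding a bound of the form $|\int wf|\lesssim D^{1/2}\|f\|_{\dot{H}^{-1}}$; combined with the coercivity, the $C_H$-dependent bound on $\|f\|_{\dot{H}^{-1}}$ from lemma \ref{l:uv}, and a further elliptic testing against $-f_{xx}$ that exploits the second-order structure of the PDE, one bootstraps to $\|f\|_{\dot{H}^1}^2\lesssim_{\depEH} D$.

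The hardest step---and the main obstacle---is the $\ell$-uniform Dirichlet coercivity of $Q$: the unconstrained linearized operator $-\partial_{xx}+G''(v)$ on $\torus_\Lambda$ has eigenvalues that decay exponentially in $\ell$, and the pinning at the $x_i$ must be shown to restore a universal positive spectral gap. I would establish this either by a direct ODE/variational argument exploiting the explicit structure of $v$ as a half-period of the Allen--Cahn orbit on each interval, or by a contradiction-compactness argument in which a hypothetical vanishing sequence of Rayleigh quotients would produce, in the limit of large intervals, a nontrivial Dirichlet zero mode of $-\partial_{xx}+G''(v_\infty)$ contradicting the isolatedness of the Dirichlet minimizer. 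Once this coercivity is in place, the absorption of cubic terms, the testing identity for the dissipation, and the sharpening of constants in the $\E\leq\gamma$ regime are essentially bookkeeping.
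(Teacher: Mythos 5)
For \eqref{L1b.1}, your outline of the Taylor expansion, the piecewise integration by parts using $f(x_i)=0$, and the $\ell$-uniform Dirichlet coercivity of $Q(f)=\int f_x^2+G''(v)f^2\,\dd x$ agrees with the paper's reliance on the linearized energy gap estimate (lemma \ref{l:line}, from \cite{OR}). But the absorption of $\int R_3$ in the regime where $\E$ is not small does not go through as you describe: bounding $\|f\|_{L^\infty}$ in terms of $C_H,C_E$ gives only $\bigl|\int R_3\bigr|\leq C(C_H,C_E)\|f\|_{L^2}^2$, which is \emph{comparable to} $Q(f)$, not dominated by it, so no lower bound on $Q$ by $\E$ follows. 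The paper resolves this with a dichotomy that your proposal is missing: for $\E\geq\gamma$ one uses the trivial estimate $\|f\|_{H^1}^2\lesssim \|f\|_{L^2}^2+E(u)+E(v)\lesssim\gamma^{-1}\E$ (lemma \ref{egilarge}), with no Taylor expansion at all; only for $\E<\gamma$ does one Taylor-expand, and there one first needs the nontrivial compactness statement (lemma \ref{uniformcloseness}) that small energy gap implies small $\|f\|_{L^\infty}$ --- your line ``when $\E\leq\gamma$ is small, $\|f\|_{L^\infty}\ll 1$'' is precisely this lemma and is circular if deduced from $\|f\|_{H^1}\lesssim\E^{1/2}$, since that is what one is proving.

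For \eqref{L1b.2}, the approach you sketch has the wrong scaling in $D$. Testing the (piecewise) equation $-f_{xx}+G'(u)-G'(v)=w$ with $f$ and estimating $\bigl|\int wf\bigr|\leq D^{1/2}\|f\|_{\dot{H}^{-1}}$, with $\|f\|_{\dot{H}^{-1}}$ only bounded by $C_H$-type constants, yields $\|f\|_{H^1}^2\lesssim D^{1/2}$, one power short of the claimed $\lesssim D$. The proposed ``further elliptic testing against $-f_{xx}$'' is not developed and does not obviously close the gap: the quadratic form arising from pairing with $-f_{xx}$, namely $\int f_{xx}^2+G''(v)f_x^2+\dots$, has no sign near the zeros where $G''(v)<0$, and there is no Dirichlet pinning on $f_x$ to invoke. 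The paper instead proves a genuinely third-order linearized coercivity estimate, $\int f_x^2\lesssim\int\bigl((f_{xx}-G''(v_\ell)f)_x\bigr)^2$ (lemma \ref{linearizeddiss}), via a compactness--contradiction argument combined with the kernel classification of the linearized operator on the half-line (lemma \ref{kernellineardisspationhalfspace}), and then absorbs the nonlinear correction using a separate uniform-closeness-from-small-dissipation lemma (lemma \ref{smalldissipationuniformclose}). This third-order coercivity is the key technical step for the dissipation estimate and is missing from your proposal.
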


\subsection{Relaxation framework}\label{ss:relax}
In the first part of the proof, we will use the relaxation framework from \cite{OW}, which requires appropriate  algebraic and differential relationships among the relevant quantities.
We will use lemma \ref{l:eedraw} together with the following algebraic relationships.
\begin{lemma}\label{L1b}
	There is a constant $C\in (0,1)$ such that
	for any $C_H,\,C_E\in (0,\infty)$, there exists  $\ell_1\in (0,\infty)$ such that, for any $\ell\geq \ell_1$, $N\in\mathbb{N}$, and $u\in \M_N(\ell,C_H,C_E)$, there holds:
	\begin{align}
		\abs*{c-\bar{c}}^2&\lesssim_{\depEH} \left(\bar{H}\E\right)^\frac{1}{2}+(|c-\bar{c}|+1)\E+\frac{\bar{H}}{\ell}+\exp(-C\ell),\label{L1b.3}\\
		\E&\lesssim_{\depEH}  \left(\bar{H} D\right)^\frac{1}{2}+(|c-\bar{c}|+1)^2 D+\exp(-C\ell),\label{L1b.4}\\
		\big| \E-\eb\big|&\lesssim \, \exp(-C\ell).\label{L1b.5}
	\end{align}
\end{lemma}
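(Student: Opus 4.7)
The plan is to dispatch the three estimates in order of increasing difficulty: \eqref{L1b.5}, then \eqref{L1b.3}, then \eqref{L1b.4}, combining geometric information about the slow manifold $\mathcal{N}_N$ with the energy-energy-dissipation control from Lemma \ref{l:eedraw}.

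For \eqref{L1b.5}, I would observe that both $v$ and $\bar v$ are, on each of their inter-zero subintervals, the energy-minimizing kink between $\pm 1$ vanishing at the endpoints. Standard kink theory, based on the first integral $\tfrac12 v_x^2 = G(v) - G_{\min}$, shows that the energy of such a minimizer on an interval of length $L$ is $\sigma + O(\exp(-C_G L))$, where $\sigma$ is the full-line kink energy. Since both $v$ and $\bar v$ have $N$ kinks with inter-zero distance $\geq \ell/2$, summing gives $E(v) = N\sigma + O(\exp(-C\ell))$ and $E(\bar v) = N\sigma + O(\exp(-C\ell))$, so $|\E - \eb| = |E(v) - E(\bar v)| \lesssim \exp(-C\ell)$.

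For \eqref{L1b.3}, the heart of the argument is the geometric estimate
\[
\|v - \bar v\|_{\dot{H}^{-1}}^2 \gtrsim \ell\,|c - \bar c|^2 - C\exp(-C\ell),
\]
proved by analyzing the mean-zero antiderivative $F$ of $v - \bar v$. Near each zero $\bar x_i$ of $\bar v$, the difference $v - \bar v$ forms a localized bump whose integral is approximately $\pm 2 s_i$ with $s_i := x_i - \bar x_i$, with sign determined by the alternating-sign structure of the kinks. Consequently $F$ is approximately piecewise-constant with jumps $\pm 2 s_i$ at the zeros, taking values of magnitude $\sim |c-\bar c|$ on intervals of length $\sim \ell$, which gives $\|F\|_{L^2}^2 \gtrsim \ell\,|c-\bar c|^2$. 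Next I would use the expansion
\[
\bar H = \|(u-v)+(v-\bar v)\|_{\dot{H}^{-1}}^2 = \|u-v\|_{\dot{H}^{-1}}^2 + 2\langle u-v, v-\bar v\rangle_{\dot{H}^{-1}} + \|v-\bar v\|_{\dot{H}^{-1}}^2,
\]
drop the nonnegative first term, and bound the cross term via Cauchy--Schwarz in the duality pairing:
\[
|\langle u-v, v-\bar v\rangle_{\dot{H}^{-1}}| = \Bigl|\int (u-v) W\,\dd x\Bigr| \leq \|u-v\|_{L^2}\,\|W\|_{L^2},
\]
where $-W_{xx} = v-\bar v$ with $\int W = 0$, so $\|W\|_{L^2} = \|v-\bar v\|_{\dot{H}^{-1}}$, and then apply $\|u-v\|_{L^2}^2 \lesssim \E$ from Lemma \ref{l:eedraw}. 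The resulting quadratic inequality in $Y := \|v-\bar v\|_{\dot{H}^{-1}}$, namely $Y^2 \leq \bar H + C\E^{1/2} Y$, yields by the quadratic formula the upper bound $Y^2 \lesssim \bar H + \E + (\bar H\E)^{1/2}$. Dividing by $\ell$ using the lower bound and absorbing $\E/\ell$ into $(|c-\bar c|+1)\E$ completes the proof of \eqref{L1b.3}.

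For \eqref{L1b.4}, I would start from $\E \lesssim \|u-v\|_{H^1}^2 = \|u-v\|_{L^2}^2 + \|u-v\|_{\dot{H}^1}^2$ and $\|u-v\|_{\dot{H}^1}^2 \lesssim D$ (both from Lemma \ref{l:eedraw}), reducing the task to bounding $\|u-v\|_{L^2}^2$. Applying the interpolation $\|u-v\|_{L^2}^2 \leq \|u-v\|_{\dot{H}^{-1}}\|u-v\|_{\dot{H}^1} \lesssim \|u-v\|_{\dot{H}^{-1}} D^{1/2}$, together with the triangle inequality $\|u-v\|_{\dot{H}^{-1}} \leq \bar H^{1/2} + \|v-\bar v\|_{\dot{H}^{-1}}$ and the matching upper bound $\|v-\bar v\|_{\dot{H}^{-1}}^2 \lesssim \ell|c-\bar c|^2 + \exp(-C\ell)$, yields a decomposition $\|u-v\|_{L^2}^2 \lesssim (\bar H D)^{1/2} + \sqrt{\ell}\,(|c-\bar c|+1) D^{1/2} + \exp(-C\ell/2) D^{1/2}$. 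An appropriate Young's inequality, combined with a feedback from \eqref{L1b.3} to absorb the $\sqrt{\ell}$-factor, then converts the middle term into $(|c-\bar c|+1)^2 D$, giving \eqref{L1b.4}.

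The main obstacle will be the delicate cross-term analysis: producing the sharp $(\bar H \E)^{1/2}$ in \eqref{L1b.3} and the $(|c-\bar c|+1)^2 D$ in \eqref{L1b.4} requires careful Young-type manipulations that are more refined than naive Cauchy--Schwarz. In particular, the final Young step in \eqref{L1b.4} must carefully dispose of the $\sqrt{\ell}$ factor without introducing uncontrollable $\ell$-dependence, which likely requires a bootstrap exploiting \eqref{L1b.3} (or a direct argument that bypasses the triangle inequality by expanding $\bar H$ as a quadratic form adapted to $D$ rather than to $\E$). A secondary difficulty is ensuring the geometric lower bound on $\|v-\bar v\|_{\dot{H}^{-1}}^2$ is robust to arbitrary sign patterns of the shifts $s_i$ compatible with the mean-zero constraint $\sum_i(-1)^i s_i = 0$.
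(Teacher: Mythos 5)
Your \eqref{L1b.5} argument (summing kink energies over subintervals) is a sound alternative to the paper's route via the Lipschitz bound \eqref{lipschitz1} and \eqref{lvx}, and your overall plan for \eqref{L1b.3}--\eqref{L1b.4}---expand $\bar H=\norm{u-\bar v}_{\dot H^{-1}}^2$ as a quadratic form and pair it with a geometric lower bound $\norm{v-\bar v}_{\dot H^{-1}}^2\gtrsim\ell\,|c-\bar c|^2$---is genuinely different from the paper's. But the cross-term step in your \eqref{L1b.3} is wrong in a way that is fatal. With $-W_{xx}=v-\bar v$ and $\int W=0$, the correct identity is $\norm{W_x}_{L^2}=\norm{v-\bar v}_{\dot H^{-1}}$, not $\norm{W}_{L^2}$; the quantity $\norm{W}_{L^2}$ is the $\dot H^{-2}$ norm of $v-\bar v$, which on $\torus_\Lambda$ can exceed the $\dot H^{-1}$ norm by a factor of order $\Lambda$. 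Correcting this, your quadratic inequality becomes $Y^2\lesssim\bar H+\Lambda\,\E^{1/2}Y$ (or, bounding the cross term by $\norm{u-v}_{\dot H^{-1}}\norm{v-\bar v}_{\dot H^{-1}}$ and invoking Poincar\'e on the torus, $Y^2\lesssim\bar H+\Lambda^2\E$); dividing by $\ell$ then leaves an $\E$-contribution of size $\Lambda^2\E/\ell\sim N^2\ell\,\E$ that cannot be absorbed into the system-size-free term $(|c-\bar c|+1)\E$ of \eqref{L1b.3}. Since your \eqref{L1b.4} must remove a $\sqrt{\ell}$ factor via a bootstrap through \eqref{L1b.3}, that argument inherits the same gap.

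The paper sidesteps the global inversion of $-\partial_{xx}$ entirely. For \eqref{L1b.3} it tests $v-\bar v$ near each zero $\bar x_i$ against a cutoff $\eta_L$ of an \emph{adjustable} lengthscale $L$, splits $\int\eta_L(v-\bar v)=\int\eta_L(v-u)+\int\eta_L(u-\bar v)$, bounds the first piece by local $L^2$ control ($\lesssim(L\E)^{1/2}$ via \eqref{L1b.1}) and the second by $\dot H^{-1}$--$\dot H^{1}$ duality ($\lesssim(\bar H/L)^{1/2}$, since $\norm{\eta_{Lx}}_{L^2}\lesssim L^{-1/2}$), and then optimizes over $L$ in three regimes; see \eqref{z.1}--\eqref{z.4}. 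For \eqref{L1b.4} it bypasses the triangle inequality on $\norm{u-v}_{\dot H^{-1}}$ by writing $\int f^2=\int f\bar f+\int f(\bar v-v)=-\int\bar F f_x+\int f(\bar v-v)$ with $\bar F$ the mean-zero antiderivative of $\bar f=u-\bar v$, which pairs $\bar H=\norm{\bar F}_{L^2}^2$ directly with $\int f_x^2\lesssim D$; the cross term $\int|f|\,|\bar v-v|$ is then handled locally with a weighted Cauchy--Schwarz and Hardy's inequality, producing the $(|c-\bar c|+1)(c-\bar c)^2D$ contribution (\eqref{estimateL2f}--\eqref{intermed_important_ineq}). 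You should study how the local duality against $\eta_L$ replaces the global $\dot H^{-1}$ expansion you attempted.
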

The proof of lemma \ref{L1b} is given in subsections \ref{ss:mainproofs}-\ref{ss:app_diss}.
We will use in addition the following differential relationships.
\begin{lemma}\label{L2b}
	There is a constant $C\in (0,\infty)$ such that
	for any $C_H,\,C_E\in (0,\infty)$, there exists  $\ell_1 \in (0,\infty)$ such that, for any $\ell\geq \ell_1$, $N\in\mathbb{N}$, and $u\in \M_N(\ell,C_H,C_E)$ that is
	a solution of the Cahn--Hilliard equation \eqref{ch}, there holds
	\begin{align}
		\frac{\dd\eb}{\dd t}&=-D,\label{L2b.1}\\
		\frac{\dd\bar{H}}{\dd t}&\lesssim_{\depEH}\left((\abs{c-\bar{c}}+1)\left(c-\bar{c}\right)^2 D\right)^\frac{1}{2}+\E^\frac{3}{4}D^\frac{1}{4}+\exp(-C\ell).\label{L2b.2}
	\end{align}
\end{lemma}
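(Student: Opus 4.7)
The plan is to differentiate both $\eb$ and $\bar H$ directly, using that the Cahn--Hilliard equation reads $u_t = \mu_{xx}$ with chemical potential $\mu := G'(u) - u_{xx}$, and to split the driving force via the decomposition $u - \bar v = (u-v) + (v-\bar v) = f + w$ into a \emph{transverse} and a \emph{slow manifold} part.

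For identity \eqref{L2b.1}: since $\bar v$ is time-independent, $\frac{\dd \eb}{\dd t} = \frac{\dd E(u)}{\dd t}$, and the chain rule combined with $u_t = \mu_{xx}$ and integration by parts on the torus yields the standard $\dot H^{-1}$ gradient-flow energy dissipation identity
\begin{equation*}
\frac{\dd E(u)}{\dd t} = \int \mu\, u_t\,\dd x = \int \mu\, \mu_{xx}\,\dd x = -\int \mu_x^2\,\dd x = -D.
\end{equation*}

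For \eqref{L2b.2}, I would differentiate $\bar H=\norm{u-\bar v}_{\dot H^{-1}}^2$ and use $\langle \phi, \psi_{xx}\rangle_{\dot H^{-1}} = -\int \phi\psi\,\dd x$ (valid since $u-\bar v$ has mean zero) to obtain
\begin{equation*}
\frac{\dd \bar H}{\dd t} = 2\langle u-\bar v,\mu_{xx}\rangle_{\dot H^{-1}} = -2\int \mu(u-\bar v)\,\dd x = -2\int \mu f\,\dd x - 2\int \mu w\,\dd x.
\end{equation*}
For $\int \mu f\,\dd x$, I would use $-v_{xx}+G'(v)=0$ on each interval between the zeros of $v$ to rewrite $\mu=(G'(u)-G'(v))-f_{xx}$ modulo exponentially small jump corrections coming from possibly mismatched $v_x$ on either side of a zero. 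Pairing against $f$ and integrating by parts gives $\int \mu f\,\dd x = \int [G'(u)-G'(v)]\, f\,\dd x + \int f_x^2\,\dd x + O(\exp(-C\ell))$, an expression quadratic in $f$. Combining the pointwise Lipschitz estimate $|G'(u)-G'(v)|\lesssim|f|$ with the energy--energy--dissipation bounds from lemma \ref{l:eedraw} ($\norm{f}_{L^2}^2\lesssim_{\depEH}\E$ and $\norm{f_x}_{L^2}^2\lesssim_{\depEH}D$), together with a Gagliardo--Nirenberg interpolation, yields $|\int \mu f\,\dd x|\lesssim_{\depEH}\E^{3/4}D^{1/4}$. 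For $\int \mu w\,\dd x$, I would parametrize the slow-manifold path from $\bar v$ to $v$ by linear interpolation of zero positions, writing $w=\int_0^1 \partial_s v^{(s)}\,\dd s$ with $v^{(s)}\in\N_N$ having zeros $\bar c+s(c-\bar c)$; the tangent $\partial_s v^{(s)}$ decomposes into localized translation modes of $v^{(s)}$ weighted by the displacements $x_i-\bar x_i$. Because translation is, up to exponentially small defects at the zeros and quadratic-in-$f$ corrections, a zero mode of the linearized operator around $v^{(s)}$, the scalar products $\int \mu\cdot(\text{translation mode})\,\dd x$ are much smaller than the naive dual pairing suggests; integrating over $s\in[0,1]$ produces the contribution $((\abs{c-\bar c}+1)(c-\bar c)^2 D)^{1/2}$.

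The main obstacle is the bound on $\int \mu w\,\dd x$. The naive estimate $|\int \mu w\,\dd x|\le D^{1/2}\norm{w}_{\dot H^{-1}}$ introduces a factor of order $\sqrt\ell\,\abs{c-\bar c}$, since the mean-zero primitive of $w$---built from near-step functions of height $\sim\abs{c-\bar c}$ on intervals of width $\sim\ell$---has $L^2$ norm of order $\sqrt\ell\,\abs{c-\bar c}$. Such a factor would destroy the $\ell$-independence claimed in the lemma. Avoiding it requires exploiting the approximate orthogonality of $\mu$ and the translation modes of $\N_N$---the PDE encoding of the metastable principle that motion along the slow manifold is ``cheap''---and carefully tracking both sources of deviation from exact orthogonality: the exponentially small jumps in $v^{(s)}_x$ at zeros, and the quadratic-in-$f$ corrections distinguishing $\mu$ from the linearized chemical potential at $v^{(s)}$.
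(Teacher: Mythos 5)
Your proof of \eqref{L2b.1} is the same as the paper's, and your skeleton for \eqref{L2b.2}---differentiate $\bar H$, pair the chemical potential $\mu$ against $u-\bar v=f+(v-\bar v)$, and treat the delta-function defects at the zeros of $v$ as exponentially small errors---matches the paper. However, there are two real gaps in the middle.

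First, the claim $\abs{\int\mu f\,\dd x}\lesssim \E^{3/4}D^{1/4}$ is false as a two-sided bound. After the integration by parts that you describe, $\int\mu f\,\dd x = \int f_x^2+G''(v)f^2\,\dd x + \int\bigl(G'(u)-G'(v)-G''(v)f\bigr)f\,\dd x + O(\exp(-C\ell))$. The first piece is the \emph{linearized energy gap}, which by \eqref{L1b.1} and lemma \ref{l:line} is comparable to $\E$; and $\E\lesssim\E^{3/4}D^{1/4}$ would require $\E\lesssim D$, which is not true in general (indeed \eqref{L1b.4} only gives $\E\lesssim(\bar H D)^{1/2}+\dots$). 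What saves the argument is the sign: since $u$ and $v$ share the same zeros, the linearized gap is \emph{nonnegative}, and it appears with a minus sign in $\frac{\dd\bar H}{\dd t}=-2\int\mu(u-\bar v)\,\dd x$, so the paper simply drops it. Only the cubic remainder is estimated, and that is where $\E^{3/4}D^{1/4}$ comes from. Your argument needs to be rephrased as the one-sided bound $-\int\mu f\,\dd x\lesssim\E^{3/4}D^{1/4}$.

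Second, your treatment of $\int\mu w\,\dd x$ is a heuristic sketch that, if carried out, is still missing the key mechanism. You correctly identify that a naive $\dot H^{-1}$--pairing introduces a fatal factor $\sqrt{\ell}$. But the orthogonality to translation modes does not, by itself, produce the right-hand side $\bigl((\abs{c-\bar c}+1)(c-\bar c)^2 D\bigr)^{1/2}$: to get a $D^{1/2}$ (rather than $\E^{1/2}$) on the right, one must control $\int\abs{v-\bar v}\,\abs{f}\,\dd x$ through $\int f_x^2\,\dd x\lesssim D$ and not through $\int f^2\,\dd x\lesssim\E$. The paper achieves this via a weighted Cauchy--Schwarz plus \emph{Hardy's inequality}: it inserts the weight $(x-x_i)^2+1$, uses the exponential localization of $v-\bar v$ to bound $\int\bigl((x-x_i)^2+1\bigr)(v-\bar v)^2\,\dd x\lesssim(\abs{c-\bar c}+1)(c-\bar c)^2$, and then applies Hardy to $\int\frac{f^2}{(x-x_i)^2+1}\,\dd x\lesssim\int f_x^2\,\dd x$; see \eqref{ineqvbarvf} and \eqref{intermed_important_ineq}. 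The second contribution $\int(v-\bar v)_x f_x\,\dd x$ is then a plain Cauchy--Schwarz using $\norm{(v-\bar v)_x}_{L^2}\lesssim\min\{1,\abs{c-\bar c}\}$. Your translation-mode parametrization is a reasonable way to think about why the estimate should hold, but it does not avoid the need for this Hardy step, and without it the scaling does not close. If you want to pursue the tangent-space route, you would still need to convert the localized pairings $\int f\bigl(G''(v)-G''(v^{(s)})\bigr)(v^{(s)})_x\,\dd x$ into $D^{1/2}$-bounds via a weighted Hardy argument, which essentially reduces to the paper's direct computation.
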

The proof of lemma \ref{L2b} is given in subsection \ref{ss:mainproofs}.

As in \cite{OW}, the algebraic and differential relationships among the central quantities are linked via an ODE lemma. We formulate the lemma in terms of the notation with which it will be applied in our work.
\begin{lemma}[Lemma 1.5, \cite{OW}]\label{l:diffeq}
Suppose that $\eb(t)\geq 0$, $D(t)\geq 0$, $\bar{H}(t)\geq 0$ and $(c(t)-\bar{c})^2\geq 0$ for $t\in[0,t_*]$ are related by the differential inequalities
\begin{equation}\label{d1}
\frac{\dd\eb}{\dd t}=-D,\quad\frac{\dd\bar{H}}{\dd t}\lesssim
c_*^\frac{1}{2}\Big[\big((c-\bar{c})^2D\big)^\frac{1}{2}+\eb^\frac{3}{4}D^\frac{1}{4}\Big],
\end{equation}
and by the algebraic inequalities
\begin{equation}\label{a1}
\eb\lesssim(\bar{H} D)^\frac{1}{2}+c_*^2D\quad\mbox{and}\quad (c-\bar{c})^2\lesssim(\bar{H}
\eb)^\frac{1}{2}+c_*\,\eb,
\end{equation}
where $c_*$ is a fixed number.
Then on $[0,t_*]$ there holds
\begin{eqnarray}
\eb(t)&\le&\eb_0,\label{n.3}\\
\eb(t)&\lesssim&\left(\bar{H}_0+c_*^2\eb_0\right)t^{-1},\label{L3.3a}\\
%(c(t)-\bar{c})^2&\lesssim&\left(\bar{H}_0+c_*^2\eb_0\right)^\frac{1}{2}\eb_0^\frac{1}{2},\label{n.4}\\
(c(t)-\bar{c})^2&\lesssim&\left(\bar{H}_0+c_*^2\eb_0\right)^\frac{1}{2}\eb(t)^\frac{1}{2},\label{L3.3b}\\
\bar{H}(t)&\lesssim& \bar{H}_0+c_*^2\eb_0,\label{L3.3c}
\end{eqnarray}
where $\eb_0=\eb(0)$ and $\bar{H}_0=\bar{H}(0)$.
\end{lemma}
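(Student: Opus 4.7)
Estimate \eqref{n.3} is immediate from $\dd\eb/\dd t = -D \leq 0$. The heart of the matter is the uniform bound \eqref{L3.3c} on $\bar{H}$; once it is in hand, \eqref{L3.3a} and \eqref{L3.3b} follow quickly from the algebraic inequalities in \eqref{a1}. I would prove \eqref{L3.3c} by a continuity/bootstrap argument: with $M := K(\bar{H}_0 + c_*^2 \eb_0)$ for a large universal constant $K$ to be fixed, set $T := \sup\{t \in [0, t_*] \colon \bar{H}(s) \leq M \text{ for all } s \in [0,t]\}$, and aim to show $\bar{H}(t) \leq M/2$ on $[0,T]$, which forces $T = t_*$.

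The main tool for the bootstrap is to \emph{invert} the first algebraic inequality in \eqref{a1}. Squaring $\eb \lesssim (\bar{H} D)^{1/2} + c_*^2 D$ and using $\bar{H} \leq M$ on $[0,T]$ yields the pointwise lower bound $D \gtrsim \eb^2/(M + c_*^2 \eb)$, or equivalently $D^{-1/2} \lesssim M^{1/2}/\eb + c_*/\eb^{1/2}$ and $D^{-3/4} \lesssim M^{3/4}/\eb^{3/2} + c_*^{3/2}/\eb^{3/4}$. Combined with $\dd\eb/\dd t = -D$, splitting into the regimes $c_*^2 \eb \leq M$ and $c_*^2 \eb \geq M$ gives by direct ODE integration an exponential decay followed by a $1/t$ decay, so that $\eb(t) \lesssim M/t$ on $[0,T]$, which (together with \eqref{n.3} to cover small $t$) is \eqref{L3.3a}.

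To close the bootstrap, I integrate the differential inequality \eqref{d1} for $\bar{H}$ using the substitution $\dd s = -\dd\eb/D$ (valid since $\eb$ is monotone), which converts each time integral into an integral over $\eb \in [\eb(t), \eb_0]$. The factor $((c-\bar{c})^2 D)^{1/2}$ is handled via \eqref{a1} as $((\bar{H}\eb)^{1/2} + c_*\eb)^{1/2} D^{1/2}$, then $D^{1/2} \dd s = D^{-1/2}(-\dd\eb)$ and the upper bound on $D^{-1/2}$ reduce everything to elementary power integrals such as $\int_{\eb(t)}^{\eb_0} \eb^{-3/4}\, \dd\eb$, $\int_{\eb(t)}^{\eb_0} \eb^{-1/2}\, \dd\eb$, and $\int_{\eb(t)}^{\eb_0} \dd\eb$; the second term $\eb^{3/4} D^{1/4}$ is treated analogously via $D^{-3/4}$. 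Each resulting prefactor is a monomial in $c_*, M, \eb_0$; using the single interpolation $c_*^2 \eb_0 \leq M/K$, every such monomial collapses to $\lesssim M/K^{\alpha}$ for some $\alpha>0$. For instance, $c_*^{1/2} M^{3/4} \eb_0^{1/4} = (c_*^2\eb_0)^{1/4} M^{3/4} \leq (M/K)^{1/4} M^{3/4} = M/K^{1/4}$. Summing contributions yields $\bar{H}(t) \leq \bar{H}_0 + K' M / K^{1/4}$ for a universal $K'$, and choosing $K$ large compared to $K'$ produces $\bar{H}(t) \leq M/2$, closing the bootstrap.

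Finally, \eqref{L3.3b} follows from the second algebraic inequality in \eqref{a1} and the bounds just proved: $(c-\bar{c})^2 \lesssim (\bar{H}\eb)^{1/2} + c_*\eb \lesssim M^{1/2}\eb^{1/2} + (c_*^2 \eb_0)^{1/2}\eb^{1/2} \lesssim (\bar{H}_0 + c_*^2 \eb_0)^{1/2}\eb(t)^{1/2}$. The principal difficulty is the precise accounting in the bootstrap step: one must verify that, across all three elementary $\eb$-integrals, every mixed monomial in $(c_*, M, \eb_0)$ indeed reduces via the single key inequality $c_*^2 \eb_0 \lesssim M$ to a \emph{sub}-multiple of $M$, so that the constant $K$ can genuinely be chosen to improve rather than merely reproduce the bootstrap bound.
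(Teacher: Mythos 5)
Your proof is correct and takes the standard buckling/bootstrap approach that underlies Lemma 1.5 in \cite{OW}; the paper itself does not reproduce the proof but simply refers to \cite{OW}, so there is no in-paper argument to compare against, but your argument is self-contained and sound.

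A few small observations. First, since $c_*^2\eb_0 \leq M/K \leq M$ and $\eb$ is monotone decreasing, the regime $c_*^2\eb \geq M$ never occurs, so the "exponential decay" phase you mention is vacuous and the $1/t$ decay from $\dd\eb/\dd t \lesssim -\eb^2/M$ holds on all of $[0,T]$; this is harmless but could be stated more simply. Second, the change of variables $D\,\dd s = -\dd\eb$ is legitimate here because $\eb$ is absolutely continuous with $\dd\eb/\dd t = -D$, and the set where $D=0$ contributes nothing to either side; this is worth a one-line justification in a polished write-up. Third, I verified the monomial bookkeeping you flagged as the principal difficulty: each of the three $\eb$-integrals produces monomials of the form $c_*^a M^b \eb_0^c$ with $a + 2c = 2(1-b)$, and every one of them factors as $(c_*^2\eb_0)^{1-b} M^b \leq (M/K)^{1-b} M^b = M/K^{1-b}$ with $1-b \in \{1/4, 1/2, 3/4, 1\}$, all strictly positive; so choosing $K$ large indeed improves rather than merely reproduces the bootstrap bound. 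Finally, closing the bootstrap requires in addition $\bar{H}_0 \leq M/2$, which holds because $\bar{H}_0 \leq M/K$ once $K \geq 2$; you should say this explicitly.
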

\begin{remark}
For the proof we refer to \cite{OW}.  The only difference is that \eqref{L3.3b} takes a slightly different form, but this can be read off directly from the proof of \cite[lemma 1.5]{OW}.
\end{remark}

\subsection{Metastability framework}\label{ss:meta}

After the initial energy relaxation, our proof is based on the metastability framework developed in \cite{OR}.
It is convenient to use the weak norm defined via
\begin{align}\label{weaknorm}
  \norm{v}_{\N}^2:=\int v \left(\left(\frac{1}{\ell^2}-\partial_{xx}\right)^{-1} v\right)\,\dd x
\end{align}
so that we do not have to restrict our slow manifold to functions with mean zero. Here $\ell$ is a lengthscale that is fixed in the proof of theorem \ref{t:main1}.
The following lemma establishes that this norm is indeed weaker than the $\dot{H}^{-1}$ norm and relates shifts of zeros of energy optimal profiles to their distance in the weak norm.
\begin{lemma}\label{l:vdist}
For all $w\in L^2\left(\torus_{\Lambda}\right)$, there holds
\begin{align}
 \norm{w}_{\N}\leq\min\left\{\norm{w}_{\dot{H}^{-1}},\, \ell \,\norm{w}_{L^2} \right\}. \label{v.2}
\end{align}
For any $\NN\in\mathbb{N}$ there exist $\ell_1,C_2\in(1,\infty)$ with the following property. For any $\ell\geq\ell_1$, $N\leq \NN$, and $v,\,\tilde{v}\in\N_N(\ell)$ with zeros $c,\,\tilde{c}$ such that $|c-\tilde{c}|\leq \frac{\ell}{C_2}$, there holds
  \begin{align}
|c-\tilde{c}|^2\lesssim \frac{1}{\ell}\norm{v-\tilde{v}}_{\N}^2.\label{v.1}
  \end{align}
\end{lemma}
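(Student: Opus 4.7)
I would pass to Fourier series on $\torus_\Lambda$: writing $w=\sum_k c_k e^{i\kappa_k x}$ with $\kappa_k=2\pi k/\Lambda$, we have (up to a universal Parseval constant)
\begin{align*}
\|w\|_\N^2=\sum_k\frac{|c_k|^2}{\ell^{-2}+\kappa_k^2},\quad \|w\|_{L^2}^2=\sum_k|c_k|^2,\quad\|w\|_{\dot H^{-1}}^2=\sum_{k\neq 0}\frac{|c_k|^2}{\kappa_k^2}.
\end{align*}
Both bounds in \eqref{v.2} follow from the pointwise estimates $(\ell^{-2}+\kappa_k^2)^{-1}\leq\ell^2$ and, for $k\neq 0$, $(\ell^{-2}+\kappa_k^2)^{-1}\leq\kappa_k^{-2}$; the latter using that $\|w\|_{\dot H^{-1}}=\infty$ whenever $c_0\neq 0$, so the claim is vacuous in that case.

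\textbf{Part (2).} After a shift in $x$ (Notation~\ref{not:cdist}) I may assume $|c-\tilde c|=|\tilde x_j-x_j|=:|\delta_j|\geq 0$. By the Fourier duality above, $\|\cdot\|_\N$ is dual to $\|\phi\|_*^2:=\ell^{-2}\|\phi\|_{L^2}^2+\|\phi_x\|_{L^2}^2$, so it suffices to exhibit a single $\phi$ with $\|\phi\|_*^2\lesssim\ell^{-1}$ and $|\int(v-\tilde v)\phi|\gtrsim|\delta_j|$; squaring then yields \eqref{v.1}. I would take $\phi$ to be a piecewise-linear hat with total support of length $\sim\ell$ strictly contained in $(x_{j-1},x_{j+1})$ (possible once $C_2$ is large, via the hypothesis $|c-\tilde c|\leq\ell/C_2$) and with $\phi\equiv 1$ on a subinterval of length $\sim\ell$ containing $[x_j,\tilde x_j]$. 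Direct computation yields $\|\phi\|_{L^2}^2\sim\ell$ and $\|\phi_x\|_{L^2}^2\sim\ell^{-1}$, hence $\|\phi\|_*^2\sim\ell^{-1}$.

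The integral estimate rests on the fact that every $v\in\N_N(\ell)$ agrees on each transition interval $(x_i,x_{i+1})$ with a signed translate $s_i\theta(\cdot-x_i)$ of the standard heteroclinic $\theta\colon\R\to(-1,1)$ for $G$ up to an error of order $e^{-\ell/C}$ in $C^1$---a standard consequence of $-v_{xx}+G'(v)=0$ with Dirichlet data $0$ at endpoints separated by $\gtrsim\ell$ together with the coercivity $G''(\pm 1)>0$. Representing $v-\tilde v=s_j[\theta(\cdot-x_j)-\theta(\cdot-\tilde x_j)]+O(e^{-\ell/C})$ on $\mathrm{supp}\,\phi$, and using that $\phi\equiv 1$ on a neighbourhood of $[x_j,\tilde x_j]$ that contains the $O(1)$-support of each $\theta'(\cdot-y)$ for $y\in[x_j,\tilde x_j]$, the fundamental theorem of calculus in the shift parameter gives
\begin{align*}
\int(v-\tilde v)\phi \;=\; s_j\int_{x_j}^{\tilde x_j}\!\!\int\theta'(x-y)\phi(x)\,dx\,dy + O(\ell e^{-\ell/C}) \;=\; \pm 2 s_j\delta_j+O(\ell e^{-\ell/C}),
\end{align*}
the sign depending on the orientation of $\theta$ (with $|\int\theta'|=2$). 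In the generic regime $|\delta_j|\gg \ell e^{-\ell/C}$, this yields $\|v-\tilde v\|_\N\gtrsim\sqrt{\ell}\,|\delta_j|$, as required.

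\textbf{Main obstacle.} The delicate case is $|\delta_j|$ itself exponentially small in $\ell$, where the additive $O(\ell e^{-\ell/C})$ error swamps the main term of the test. I would handle this via a direct linearisation: write $v-\tilde v=-\sum_i s_i\delta_i\,\partial_{x_i} v + R$ with $\|R\|_{L^\infty}\lesssim|\delta|^2$, note $\partial_{x_i}v=-s_i\theta'(\cdot-x_i)$ up to exponentially small boundary corrections, and compute the $\N$-norm of the leading sum through its Gram matrix---whose diagonal is $\|\theta'\|_\N^2\sim\ell$ (computed explicitly via the Green's function $G(x)=\tfrac{\ell}{2}e^{-|x|/\ell}$ of $\ell^{-2}-\partial_{xx}$) and whose off-diagonal entries decay like $\ell\,e^{-|x_i-x_j|/\ell}\leq \ell/e$, yielding positive-definiteness of order $\ell$ and hence the required bound in this residual regime.
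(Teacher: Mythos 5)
Your Part~(1) is correct and is exactly the paper's argument. For Part~(2), the duality strategy is in the same spirit as the paper's, which also tests against a compactly supported $\xi$ with $|\xi_x|\lesssim\ell^{-1}$. However, the paper factors the argument through a standalone approximation lemma (lemma~\ref{l:appx}): set $f=v-\tilde v$ and $\alpha_i:=\int_{m_i}^{m_{i+1}}f$, show $\norm*{f-\sum\alpha_i\delta_{z_i}}_\N\leq\epsilon\norm*{\sum\alpha_i\delta_{z_i}}_\N$ (a \emph{relative} error), then lower-bound $\norm*{\sum\alpha_i\delta_{z_i}}_\N\gtrsim\ell^{1/2}\max_i|\alpha_i|$ by testing against a hat localized to a single transition interval, and finally invoke $|y_i-x_i|\lesssim\max_i|\alpha_i|$. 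Because every error in that chain is bounded by a small multiple of $\max_i|\alpha_i|$, there is no residual regime: the estimate is uniform in the size of $|c-\tilde c|$.

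Your two-case structure is forced by the absolute error bound $O(\ell e^{-\ell/C})$. That bound is real if you write $v=s_j\theta(\cdot-x_j)+O(e^{-\ell/C})$ and treat the correction additively; but the difference of the two corrections for $v$ and $\tilde v$ scales with $\max_i|\delta_i|$, not absolutely (the map from zeros to the optimal profile is Lipschitz, and the correction terms inherit this). Had you tracked this, your main estimate would read $\int(v-\tilde v)\phi=\pm 2s_j\delta_j+O\bigl(\max_i|\delta_i|\,\ell e^{-\ell/C}\bigr)$, which absorbs into the main term for $\ell$ large, and your ``Main obstacle'' case never arises.

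As written, the ``Main obstacle'' branch has a genuine gap. You assert that the Gram matrix with diagonal $\sim\ell$ and off-diagonals bounded by $\ell\,e^{-|x_i-x_j|/\ell}\leq\ell/e$ is positive definite of order $\ell$. This does not follow from the stated bounds: even with minimal spacing $|x_i-x_j|\geq|i-j|\ell$, the off-diagonal row sum can reach $\ell\cdot 2\sum_{k\geq 1}e^{-k}=\frac{2\ell}{e-1}\approx 1.16\,\ell$, exceeding the diagonal, so Gershgorin/diagonal dominance fails for $N\geq 3$. The conclusion is almost certainly true (the Toeplitz symbol of $e^{-|k|}$ is bounded below), but it needs a different argument---and the cleanest such argument is precisely the localized test function trick (choose $\xi$ supported in $(m_j,m_{j+1})$ only, so that the other translates contribute only exponentially small cross-terms), at which point you have re-derived the paper's approach and the Gram matrix is unnecessary. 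So the structural recommendation is: keep the errors relative and avoid the case split entirely.
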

The proof of lemma \ref{l:vdist} is given in subsection \ref{ss:weak}. Our two conditions for metastability now follow.
The first condition is an energy--energy--dissipation (EED) relationship.
\begin{lemma}\label{l:eed}
For any $C_H,\,C_E\in (0,\infty)$, there exist  $\ell_1,\, C_{\mathrm{ed}}\in (0,\infty)$ such that, for any $\ell\geq \ell_1$, $N\in\mathbb{N}$, and $u\in \M_N(\ell,C_H,C_E)$ there holds
\begin{align}
  \frac{1}{2C_{\mathrm{ed}}\Lambda^2}\norm{u-v}_{\N}^2\leq \E\leq \frac{C_{\mathrm{ed}} \Lambda^2}{2} D.\label{eed}
\end{align}
Moreover, there is a $\gamma>0$ such that for $\E(u)\leq \gamma$  \eqref{eed} holds with a constant $C_{\mathrm{ed}}$ that is independent of $C_{H}$ and $C_{E}$.
\end{lemma}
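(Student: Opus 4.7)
The plan is to derive both inequalities in \eqref{eed} by combining lemma \ref{l:eedraw} and lemma \ref{l:vdist} with a piecewise Poincar\'e estimate that exploits a structural fact from lemma \ref{l:uv}: the energy-optimal profile $v$ shares its zeros $c=\{x_1,\ldots,x_N\}$ with $u$, so $f:=u-v$ vanishes at each $x_i$. This is the decisive observation for the upper bound on $\E$.

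For the lower bound on $\E$, I will apply \eqref{v.2} to $w=u-v$ and then \eqref{L1b.1}:
\begin{align*}
\norm{u-v}_\N^2\leq \ell^2\norm{u-v}_{L^2}^2\leq \ell^2\norm{u-v}_{H^1}^2\lesssim_{\depEH}\ell^2\E.
\end{align*}
Since $v\in\N_N(\ell/2)$ forces $N\geq 2$ zeros separated by at least $\ell/2$ to fit into $\torus_\Lambda$, we have $\ell\lesssim\Lambda$, and renaming constants produces the claimed bound.

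For the upper bound, start from \eqref{L1b.1}: $\E\lesssim_{\depEH}\norm{u-v}_{H^1}^2=\norm{u-v}_{L^2}^2+\norm{u-v}_{\dot H^1}^2$. The gradient part is absorbed directly into $D$ via \eqref{L1b.2}. For the $L^2$ part, decompose $\torus_\Lambda$ into the subintervals $(x_i,x_{i+1})$, each of length at most $\Lambda$. Since $f$ vanishes at both endpoints of each subinterval, the Poincar\'e inequality for functions with zero boundary values yields
\begin{align*}
\int_{x_i}^{x_{i+1}}f^2\,\dd x \leq \frac{(x_{i+1}-x_i)^2}{\pi^2}\int_{x_i}^{x_{i+1}}f_x^2\,\dd x\leq\frac{\Lambda^2}{\pi^2}\int_{x_i}^{x_{i+1}}f_x^2\,\dd x.
\end{align*}
Summing in $i$ gives $\norm{u-v}_{L^2}^2\lesssim\Lambda^2\norm{u-v}_{\dot H^1}^2$, and another application of \eqref{L1b.2} produces $\E\lesssim_{\depEH}\Lambda^2 D$, as desired. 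The ``moreover'' statement follows because the analogous statement in lemma \ref{l:eedraw} removes the $C_H,C_E$ dependence from \eqref{L1b.1}--\eqref{L1b.2}, while the Poincar\'e step and \eqref{v.2} are universal.

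The main conceptual pitfall, had we tried to apply a torus-wide Poincar\'e inequality to $f$, would have been controlling the mean of $f$, which need not vanish. The piecewise Poincar\'e bypass sidesteps this precisely because the $N$-layer structure of the slow manifold forces $f$ to vanish at each $x_i$; it is this link between the geometric construction in lemma \ref{l:uv} and the analytic estimate that makes the whole plan work without any delicate spectral input.
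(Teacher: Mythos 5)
Your proposal is correct and, after unpacking the paper's terse one-line proof, is essentially the same argument: \eqref{L1b.1}--\eqref{L1b.2} from lemma \ref{l:eedraw}, the $\ell\norm{\cdot}_{L^2}$ branch of \eqref{v.2} together with $\ell\lesssim\Lambda$, and the Dirichlet Poincar\'e inequality on each subinterval $(x_i,x_{i+1})$ (valid because $f=u-v$ vanishes at the common zeros $c$ by lemma \ref{l:uv}). The observation that $\ell\norm{\cdot}_{L^2}$ is the safe branch of \eqref{v.2} to use here — avoiding any mean-zero issue for $u-v$ — is a sensible detail worth having made explicit.
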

\begin{proof}
  The proof follows from lemma \ref{l:eedraw} together with \eqref{v.2} and the Poincar\'e inequality.
\end{proof}
Our second condition is the Lipschitz condition on $\N_N(\ell)$.
\begin{lemma}\label{l:lip}
	There is $\ell_1\in(0,\infty)$ with the following property. For any $\ell\geq \ell_{1}$, $N\in\mathbb{N}$, and $v,\tilde v\in\N_N(\ell)$, there holds
	%\begin{align}\label{lipschitz1}
%		\abs*{E(v)-E(\tilde v)}\lesssim N \exp\left(-C_G\ell\right) \abs{c-\tilde c}.
%	\end{align}
%	For any $N_{1}\in\mathbb{N}$ there exist $\ell_{1},C_{2}\in (0,\infty)$ with the following property. For any $\ell\geq \ell_{1}$, $N\leq N_{1}$ and $v,\tilde v\in\N_N(\ell)$ with zeros $c,\tilde c$,
%	that satisfy $\abs{c-\tilde c}\leq \frac{\ell}{C_2}$, there holds
	\begin{align}\label{lipschitz2}
		\abs*{E(v)-E(\tilde v)}\lesssim \delta \norm{v-\tilde v}_{\N}\quad\text{with}\quad\delta:= N\ell^{-\frac{1}{2}}\exp(-C_G\ell) .
	\end{align}
\end{lemma}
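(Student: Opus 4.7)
My plan is to decompose the energy on $\N_N(\ell)$ into a sum of ``bubble energies,'' bound the derivative of the bubble-energy function by $\exp(-C_G L)$, and then transfer shifts of zeros to the weak norm $\norm{\cdot}_{\N}$ via Lemma~\ref{l:vdist}. By the evenness of $G$ and the minimum-energy property, each $v\in\N_N(\ell)$ is uniquely determined by its zeros $c=(x_1,\ldots,x_N)$, and the energy decomposes as $E(v)=\sum_{i=1}^{N}e(L_i)$ with $L_i:=x_{i+1}-x_i\geq\ell$, where $e(L)$ is the minimum of $\int_{0}^{L}\bigl(\tfrac{1}{2}v_{x}^{2}+G(v)\bigr)\,\dd x$ over positive profiles vanishing at the endpoints. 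Phase-plane analysis of $-v_{xx}+G'(v)=0$ shows the minimizing bubble is symmetric about $L/2$ with peak $v_{\max}(L)\in(0,1)$ satisfying the conservation law $\tfrac{1}{2}v_x^2-G(v)=-G(v_{\max})$; rescaling $x=sL$ and applying the envelope theorem yields the clean identity $e'(L)=G(v_{\max}(L))$. Combining the quadrature $L/2=\int_{0}^{v_{\max}}\dd v/\sqrt{2(G(v)-G(v_{\max}))}$ with the local expansion $G(v)-G(v_{\max})\approx\tfrac{1}{2}C_{G}^{2}\bigl((1-v)^{2}-(1-v_{\max})^{2}\bigr)$ near $v=1$ then produces the sharp asymptotic $1-v_{\max}(L)\sim\exp(-C_{G}L/2)$ and hence $|e'(L)|\lesssim\exp(-C_{G}L)$ for $L\geq\ell_{1}$. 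Since $\max_{i}|L_{i}-\tilde L_{i}|\leq 2|c-\tilde c|$ and $L_i,\tilde L_i\geq\ell$, summing over $i$ yields
\begin{align*}
  |E(v)-E(\tilde v)|\leq \sum_{i}|e(L_{i})-e(\tilde L_{i})|\lesssim N\exp(-C_{G}\ell)\,|c-\tilde c|.
\end{align*}

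To close the estimate I would split on the size of $|c-\tilde c|$. When $|c-\tilde c|\leq \ell/C_{2}$, Lemma~\ref{l:vdist} directly yields $|c-\tilde c|\lesssim \ell^{-1/2}\norm{v-\tilde v}_{\N}$, which combined with the display above gives the desired Lipschitz bound. When $|c-\tilde c|> \ell/C_{2}$, the triangle estimate above degenerates, but the trivial bound $|E(v)-E(\tilde v)|\lesssim N\exp(-C_{G}\ell)$ still holds, since every $e(L_i)$ is within $O(\exp(-C_G\ell))$ of the limit $2c_0=\int_{-1}^1\sqrt{2G(s)}\,\dd s$ (by integrating $e'$). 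On the other hand, near the maximally shifted zero the functions $v$ and $\tilde v$ have opposite signs of magnitude close to $1$ on a subinterval of length $\sim\ell$ (bubbles of length $\geq\ell$ enter an $O(1)$ boundary layer of $\pm 1$); testing the dual characterization of $\norm{\cdot}_{\N}$ against a smooth bump of width $\sim\ell$ on this subinterval produces $\norm{v-\tilde v}_{\N}\gtrsim \ell^{3/2}$, so that $\delta\norm{v-\tilde v}_{\N}\gtrsim N\ell\exp(-C_{G}\ell)$ comfortably dominates $|E(v)-E(\tilde v)|$.

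The principal technical point is recovering the \emph{sharp} constant $C_{G}$ in the exponential bound on $e'(L)$; a weaker exponent is straightforward, but pinning down $C_G$ forces the careful asymptotic analysis of the quadrature integral near $v_{\max}\to 1$, where the dominant contribution is the logarithmic singularity $\int_{\eps}\dd u/\sqrt{u^{2}-\eps^{2}}$ whose coefficient is exactly $1/C_{G}=1/\sqrt{G''(1)}$. Everything else is essentially bookkeeping.
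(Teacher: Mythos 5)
Your approach is essentially the paper's: first establish the ``zero--coordinate'' Lipschitz bound $|E(v)-E(\tilde v)|\lesssim N\exp(-C_G\ell)\,|c-\tilde c|$, then transfer shifts of zeros into the weak norm via lemma~\ref{l:vdist}. The paper obtains the first step by citing \cite[Lemma~3.1]{OR} (reproduced as \eqref{lipschitz1}); you rederive it through the bubble decomposition $E(v)=\sum_i e(L_i)$, the envelope identity $e'(L)=G\bigl(v_{\max}(L)\bigr)$, and the quadrature asymptotic $1-v_{\max}(L)\sim\exp(-C_G L/2)$, which correctly captures the sharp constant $C_G=\sqrt{G''(\pm 1)}$. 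One genuinely useful addition is your explicit split on the size of $|c-\tilde c|$: since \eqref{v.1} is only stated for $|c-\tilde c|\le \ell/C_2$, the paper's one-line proof leaves the complementary regime implicit, whereas you handle it directly by pairing the trivial bound $|E(v)-E(\tilde v)|\lesssim N\exp(-C_G\ell)$ against the lower bound $\norm{v-\tilde v}_{\N}\gtrsim\ell^{3/2}$ obtained by testing the dual characterization with a bump of width $\sim\ell$ supported where $v$ and $\tilde v$ carry opposite signs of magnitude near one; this closes the argument for arbitrary $v,\tilde v\in\N_N(\ell)$.
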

\begin{proof}
The proof follows by applying
\begin{align}\label{lipschitz1}
		\abs*{E(v)-E(\tilde v)}\lesssim N\exp\left(-C_G\ell\right) \abs{c-\tilde c}.
	\end{align}
(cf. \cite[Lemma 3.1]{OR}) and  \eqref{v.1}.	
\end{proof}

In order to deduce metastability via the weak norm $\norm{\cdot}_{\N}$, we need to slightly modify theorem 1.1 from \cite{OR}; proposition \ref{prop:orweak} below carries this out. An additional, more significant issue is that \cite[theorem 1.1]{OR} implicitly assumes integrability of $E(v(t))$. While this was elementary for the application to the Allen-Cahn equation, since we assumed simple zeros of the initial condition and this property is preserved by the flow (for an exponentially long time), this is not the case for the Cahn-Hilliard equation. Indeed, even an initial condition with simple zeros may develop spurious zeros during the initial energy relaxation.
In one spatial dimension this difficulty can be readily circumvented by making a measurable selection of the zeros.
We will instead introduce a metastability result that makes no assumption of integrability, which may be of interest for more complicated applications.

Proposition \ref{prop:orweak2} captures metastability by establishing exponential in time convergence of $\eb$ (instead of $\E$). In so doing, one obtains the sharp factor of $2$ in \eqref{u.first}---at the expense of an error term that goes like $\delta$ (instead of $\delta^2$); compare to \eqref{u.first.2}. In a postprocessing step after applying proposition \ref{prop:orweak2}, we deduce that the zeros are in fact simple on $[s_1+1,T]$ (cf. lemma \ref{l:diss2}) and hence obtain via proposition \ref{prop:orweak} that the solution indeed enters a $\delta^2$ neighborhood of the slow manifold.

We now state our two metastability propositions. We will apply them to the Cahn--Hilliard evolution with $\N=\N_N(\frac{\ell_{0}}{2})$, $\norm{\cdot}_0=\norm{\cdot}_\N$, and $\norm{\cdot}_1=\norm{\cdot}_{\dot{H}^{-1}}$.
\begin{proposition}[Metastability without assuming integrability of $E(v(t))$]\label{prop:orweak2}
	Let $(X_{0},\norm{\cdot}_{0})$ be a Banach space and $(X_{1},\norm{\cdot}_{1})$ be a Hilbert space with
	\begin{align}
		X_{1}\subset X_{0}\qquad\text{and}\qquad
		\norm{u}_{0}\leq \norm{u}_{1}.\label{u3.4}
	\end{align}
	For a differentiable functional $E:X_{1}\to\R$, consider the gradient flow
	\begin{align}\label{gfeq2}
		\dot u=-\nabla E(u)\qquad u(0)=u_0.
	\end{align}
	Let $\mathcal{M}\subset X_{1}$ and suppose that there is a set $\mathcal{N}\subset X_{0}$ with the following properties:
	\begin{enumerate}
		\item[(i)]
			For every $u\in\mathcal{M}$ there is a $v\in\mathcal{N}$ such that
			\begin{align}\label{eed2.2}
				\frac{1}{2}\norm{u-v}_{0}^{2}\leq E(u)-E(v)
				\leq \frac{1}{2}\norm{\nabla E(u)}_{1}^{2}.
			\end{align}
		\item[(ii)]
			There is a constant $\delta\in (0,1)$ such that for all $v_{1},v_{2}\in\mathcal{N}$ we have
			\begin{align}\label{lipschitz.2}
				\abs{E(v_{1})-E(v_{2})}\leq \delta \norm{v_{1}-v_{2}}_{0}.
			\end{align}
	\end{enumerate}
	Suppose that for $t\in[0,T]$, the solution of \eqref{gfeq2} satisfies $u(t)\in \mathcal{M}$ and let $v(t)\in\mathcal{N}$ denote a function associated to the solution at time $t$ as in (i). Define the initial quantity
$
  e_0:=E\bigl(u(0)\bigr)-E\bigl(v(0)\bigr)
$
	and suppose that $\delta$ is small enough so that
$\ln(e_0^{\frac{1}{2}}/\delta)\leq \delta^{-1}.$	
Then for all $t\leq \delta^{-1}$ there holds
	\begin{align}
			\abs*{E\bigl(u(t)\bigr)-E\bigl(v(0)\bigr)}\lesssim \exp\bigl(-2t\bigr)e_0+\delta\left(e_0^{\frac{1}{2}}+e_0^{\frac{1}{4}}\right).\label{u.first}
	\end{align}
	Furthermore, changes up to time $T_1:=\max\{0,2\ln\left( \frac{e_0}{\delta}\right)\}$  are controlled by
  \begin{align}
  \sup_{t\leq T_1}\Big(\norm{u(t)-u(0)}_1+ \norm{v(t)-v(0)}_0\Big) \, \lesssim e_0^{\frac{1}{2}}+e_0^{\frac{1}{4}},\label{13.s2}
\end{align}
and the energy gap at $t=T_1$ is small:
\begin{align}
  \abs*{E\bigl(u(T_{1})\bigr)-E\bigl(v(0)\bigr)}+\abs{E\bigl(u(T_{1})\bigr)-E\bigl(v(T_{1})\bigr)}\lesssim \delta\left(e_0^{\frac{1}{2}}+e_0^{\frac{1}{4}}\right).\label{13.s1}
\end{align}
For times $T_1\leq t\leq \delta^{-1}$ the motion is slow in the sense that
		\begin{align}
		\norm{u(t)-u(T_1)}_{1}^2&\lesssim \delta(t-T_1)\left(e_0^{\frac{1}{2}}+e_0^{\frac{1}{4}}\right),\label{diffu}\\
\norm{v(t)-v(T_1)}_0^2&\lesssim  \delta(1+t-T_1)\left({e_0^{\frac{1}{2}}}+e_0^{\frac{1}{4}}\right)+\delta^2,\label{diffv}
	\end{align}
and the solution is trapped near the slow manifold via
\begin{align}
\norm{u(t)-v(t)}_0^2+\abs*{E\bigl(u(t)\bigr)-E\bigl(v(t)\bigr)}\lesssim \delta\left({e_0^{\frac{1}{2}}}+e_0^{\frac{1}{4}} +\norm{v(t)-v(T_1)}_0\right).\notag
\end{align}
\end{proposition}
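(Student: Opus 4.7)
I would follow the buckling strategy of \cite{OR}, modified to sidestep the fact that $E(v(t))$ need not be absolutely continuous in the present setting. Set
\begin{align*}
  a(t):=E(u(t))-E(v(0)), \quad b(t):=E(u(t))-E(v(t)), \quad c(t):=E(v(t))-E(v(0)),
\end{align*}
so that $a=b+c$. Since $u$ solves a gradient flow in $\norm{\cdot}_1$, $a$ is absolutely continuous with $\dot a(t)=-\norm{\nabla E(u(t))}_1^2$, while $b$ and $c$ require no time derivative at all. Condition~(i) yields $2b(t)\leq -\dot a(t)$, hence $\dot a+2a\leq 2c$, and Gronwall delivers
\begin{align*}
  a(t) \leq e^{-2t}e_0+2\int_0^t e^{-2(t-s)}|c(s)|\,\dd s,
\end{align*}
while the left-hand inequality in (i) together with $b\geq 0$ imply $a(t)\geq c(t)\geq -|c(t)|$. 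Since (ii) gives $|c(s)|\leq \delta\,\norm{v(s)-v(0)}_0$, the task reduces to controlling $\norm{v(s)-v(0)}_0$.

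\textbf{Paragraph 2 (Bootstrap for Phase A).} By \eqref{u3.4}, (i), and the triangle inequality, $\norm{v(s)-v(0)}_0\leq \sqrt{2b(s)}+\norm{u(s)-u(0)}_1+\sqrt{2e_0}$, while the gradient-flow identity $\int_0^s \norm{\nabla E(u)}_1^2\,\dd\tau=a(0)-a(s)$ combined with Cauchy--Schwarz gives $\norm{u(s)-u(0)}_1\leq \sqrt{s(e_0+|a(s)|)}$. I would then run a bootstrap with ansatz $|a(t)|\leq e^{-2t}e_0+M$ on an interval $[0,T^*]$ with $T^*\leq \delta^{-1}$: substituting the ansatz, using $b\leq |a|+|c|\leq |a|+\delta\norm{v-v(0)}_0$, and absorbing $\sqrt{\delta\norm{v-v(0)}_0}$ via Young's inequality yields a closed self-consistent estimate for $M$. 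Under the hypothesis $\ln(e_0^{1/2}/\delta)\leq \delta^{-1}$, the logarithmic factors arising from $T_1$ in the displacement integrals can be absorbed into universal constants, yielding $M\lesssim \delta(e_0^{1/2}+e_0^{1/4})$, which is \eqref{u.first}. Substituting this $M$ into the displacement estimate gives \eqref{13.s2}, and \eqref{13.s1} follows by evaluating \eqref{u.first} at $t=T_1$, where $e^{-2T_1}e_0=\delta^2/e_0\leq \delta$, together with (ii). The most delicate point here is the balance of the $\sqrt{M}$ terms arising from both $\sqrt{b}$ and the displacement integrals; the $e_0^{1/4}$ contribution emerges when one absorbs a term of the form $\sqrt{\delta\cdot\sqrt{e_0}}=\delta^{1/2}e_0^{1/4}$ via Young.

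\textbf{Paragraph 3 (Slow motion for Phase B).} For $t\in[T_1,\delta^{-1}]$, monotonicity of $a$ and \eqref{u.first} yield $|a(t)|\lesssim \delta(e_0^{1/2}+e_0^{1/4})=:\eta$ throughout. Then $\norm{u(t)-u(T_1)}_1^2\leq (t-T_1)(a(T_1)-a(t))\lesssim (t-T_1)\eta$ delivers \eqref{diffu}. For \eqref{diffv}, the same triangle inequality as in Paragraph~2 but with $v(T_1),u(T_1)$ in place of $v(0),u(0)$, combined with $b(s)\lesssim \eta+\delta\norm{v(s)-v(T_1)}_0$ (using $\delta\norm{v(T_1)-v(0)}_0\lesssim \eta$ from \eqref{13.s2}) and one more Young absorption delivers the estimate. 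Finally, the trapping bound follows from $\norm{u(t)-v(t)}_0^2+|E(u(t))-E(v(t))|\leq 3b(t)\lesssim |a(t)|+\delta\norm{v(t)-v(0)}_0$ together with the splitting $\norm{v(t)-v(0)}_0\leq \norm{v(t)-v(T_1)}_0+\norm{v(T_1)-v(0)}_0$ and \eqref{13.s2}. The overall main obstacle is closing the Phase~A bootstrap cleanly, since the non-integrability of $E(v(t))$ prevents a direct a priori control on $c(t)$ and forces an indirect route through $\norm{v-v(0)}_0$, with the Young-absorption balance then dictating the precise $\delta(e_0^{1/2}+e_0^{1/4})$ form of the resulting error.
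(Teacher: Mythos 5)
Your reduction in Paragraph 1 is essentially the paper's: setting $a=\mathcal{E}_0$, $b=\mathcal{E}$, $c=E(v(t))-E(v(0))$ and exploiting absolute continuity of $t\mapsto E(u(t))$ exactly captures the inequality \eqref{original} in the paper's Step 2, and your Paragraph 3 matches the paper's Step 4. The gap is in Paragraph 2, in the claim that ``the logarithmic factors arising from $T_1$ in the displacement integrals can be absorbed into universal constants.'' The unweighted gradient-flow inequality you invoke gives only
\begin{align*}
\norm{u(s)-u(0)}_{1}^{2}\leq s\bigl(E(u(0))-E(u(s))\bigr)\leq s\,e_{0}+s\,\delta\norm{v(s)-v(0)}_{0},
\end{align*}
so that for $s$ comparable to $T_{1}\sim\ln(e_{0}^{1/2}/\delta)$ the displacement term carries a factor $\sqrt{T_{1}}$. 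The hypothesis $\ln(e_{0}^{1/2}/\delta)\leq\delta^{-1}$ does not make $T_{1}$ bounded; it only ensures $T_{1}\leq\delta^{-1}$, so $\sqrt{T_{1}}$ can be as large as $\delta^{-1/2}$. Feeding this through the Lipschitz condition yields an error term $\delta\sqrt{T_{1}e_{0}}\sim\sqrt{\delta e_{0}}$, which is strictly larger than $\delta\bigl(e_{0}^{1/2}+e_{0}^{1/4}\bigr)$ whenever $e_{0}>\delta^{2}$ (which is the nontrivial case). So your bootstrap closes only at the ``rough'' level $\exp(-t)e_{0}+\sqrt{\delta e_{0}}+\delta$, not at the level claimed in \eqref{u.first}, \eqref{13.s1}, \eqref{13.s2}.

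The missing ingredient is the \emph{exponentially weighted} gradient-flow inequality, applied after the rough bound is already in hand. The paper first proves the rough decay $\abs{\mathcal{E}_{0}(t)}\lesssim\exp(-t)e_{0}+\sqrt{\delta e_{0}}+\delta$ (its Step 2, which your Paragraph 1+2 essentially reproduces), and then in Step 3 controls the initial-layer displacement by testing the dissipation identity with the weight $w(s)=\exp(s/4)$:
\begin{align*}
\norm{u(t)-u(0)}_{1}^{2}\lesssim E\bigl(u(0)\bigr)-\exp\!\left(\tfrac{t}{4}\right)E\bigl(u(t)\bigr)+\int_{0}^{t}\exp\!\left(\tfrac{s}{4}\right)E\bigl(u(s)\bigr)\,\dd s.
\end{align*}
Plugging the rough decay into the right-hand side is exactly what kills the $\sqrt{t}$ factor and produces $\norm{u(t)-u(0)}_{1}^{2}\lesssim e_{0}+e_{0}^{1/2}$ for $t\leq T_{1}$, hence \eqref{13.s2}, and hence (via the triangle inequality and Young) the improved error $\delta\bigl(e_{0}^{1/2}+e_{0}^{1/4}\bigr)$. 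Without the weighted inequality (or an equivalent device that exploits that most of the dissipation budget is spent early), your bootstrap cannot be closed at the claimed rate. Also note that your $e_{0}^{1/4}$ contribution actually originates from $\sqrt{e_{0}^{1/2}}$ in the displacement bound $\norm{u(t)-u(0)}_{1}\lesssim e_{0}^{1/2}+e_{0}^{1/4}$, not from $\sqrt{\delta\sqrt{e_{0}}}$ as you suggest.
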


\begin{proposition}[Metastability under weak norm condition]\label{prop:orweak}
	Let $X_{0}$, $X_{1}$, $E$, $\mathcal{M}$, $\mathcal{N}$, $u$, $v$, $T$, and $\delta$ be as in proposition \ref{prop:orweak2}.
%	Let $(X_{0},\norm{\cdot}_{0})$ be a Banach space and $(X_{1},\norm{\cdot}_{1})$ be a Hilbert space with
%	\begin{align*}
%		X_{1}\subset X_{0}\qquad\text{and}\qquad
%		\norm{u}_{0}\leq \norm{u}_{1}.
%	\end{align*}
%	For a differentiable functional $E:X_{1}\to\R$, consider the gradient flow
%	\begin{align}\label{gfeq}
%		\dot u=-\nabla E(u),\qquad u(0)=u_0.
%	\end{align}
%	Let $\mathcal{M}\subset X_{1}$ and suppose that there is a set $\mathcal{N}\subset X_{0}$ such that properties \eqref{eed2.2} and \eqref{lipschitz.2} above hold.
%	Suppose that for $t\in[0,T]$, the solution of \eqref{gfeq2} satisfies $u(t)\in \mathcal{M}$ %and let $v(t)\in\mathcal{N}$ denote a function associated to the solution at time $t$ as in %(i).
Suppose moreover that $t\mapsto E(v(t))$ is integrable on $[0,T]$.
	Then for every $\epsilon\in (0,1)$ there is a constant $C_{\epsilon}$ such that
	\begin{align}\label{u.first.2}
		\begin{split}
			\MoveEqLeft
			\norm{u(t)-v(t)}_{0}^{2}+E\bigl(u(t)\bigr)-E\bigl(v(t)\bigr)\\
			&\lesssim \exp\bigl(-(2-\epsilon)t\bigr)\bigl(E(u_{0})-E(v_{0})\bigr)+C_{\epsilon}\delta^{2}.
		\end{split}
	\end{align}
	Furthermore, for $0<s<t\leq T$ there holds
	\begin{align}\label{slowmotion}
		\norm{u(t)-u(s)}_{1}\lesssim \Big(E\bigl(u(s)\bigr)-E\bigl(v(s)\bigr)\Big)^{\frac{1}{2}}+\delta(t-s+1).
	\end{align}
\end{proposition}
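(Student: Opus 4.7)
The plan is to adapt the buckling argument of \cite[Theorem 1.1]{OR} to the present weak-norm setting. The integrability of $t\mapsto E(v(t))$ compensates for the lack of differentiability of the slow-manifold trajectory, and lets us integrate the basic differential inequality with an exponential weight. Write $\E(t):=E(u(t))-E(v(t))$. Combining $\frac{\dd}{\dd t}E(u)=-\norm{\nabla E(u)}_{1}^{2}$ with the upper EED in (i) yields the pointwise inequality
\[
\frac{\dd}{\dd t}E(u(t))+2E(u(t))\leq 2E(v(t)),
\]
whose derivative side involves only the differentiable $E(u(\cdot))$, so no regularity of $v(\cdot)$ is assumed.

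Multiplying by $e^{(2-\epsilon)t}$ for small $\epsilon>0$, integrating over $[0,t]$, and subtracting $e^{(2-\epsilon)t}E(v(t))$ (using integrability of $E(v(\cdot))$ to make sense of the integrals and to express everything in terms of $E(v)$-values rather than derivatives via Fubini), one arrives at a bound of the form
\[
\E(t)\leq e^{-(2-\epsilon)t}\E(0)+\delta\cdot(\text{weighted bounds on }\norm{v(s_2)-v(s_1)}_0)-\epsilon\,e^{-(2-\epsilon)t}\!\int_{0}^{t}\!e^{(2-\epsilon)s}\E(s)\,\dd s,
\]
where the Lipschitz condition (ii) has been applied to every $E(v)$-difference. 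The triangle inequality
\[
\norm{v(s_{2})-v(s_{1})}_{0}\leq \sqrt{2\E(s_{2})}+\norm{u(s_{2})-u(s_{1})}_{1}+\sqrt{2\E(s_{1})},
\]
which follows from the lower EED in (i) together with the norm comparison \eqref{u3.4}, then converts the $\delta\norm{v-v}_{0}$ contributions into $\delta\sqrt{\E}$ terms (absorbed into the negative $-\epsilon\int e^{(2-\epsilon)s}\E$ via Young, at the cost of $C_{\epsilon}\delta^{2}$) plus trajectory terms $\delta\norm{u(s_{2})-u(s_{1})}_{1}$ that remain to be controlled.

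The trajectory bound \eqref{slowmotion} is established by a two-scale argument. On a unit interval $[s,s+1]$, Cauchy--Schwarz combined with the dissipation identity gives $\norm{u(s+1)-u(s)}_{1}^{2}\leq E(u(s))-E(u(s+1))$; feeding in (ii) and the triangle inequality above, Young closes the inequality into $\norm{u(s+1)-u(s)}_{1}\lesssim\sqrt{\E(s)}+\delta$. Summing over unit intervals covering $[s,t]$ and using the just-established exponential decay of $\E$ (so that $\sum_{k}\sqrt{\E(s+k)}$ is geometric, summing to $\sqrt{\E(s)}+O(\delta(t-s))$) produces \eqref{slowmotion}. Feeding \eqref{slowmotion} back into the trajectory term of the previous step then closes the buckling and yields \eqref{u.first.2}.

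The main obstacle is this mutual dependence between \eqref{u.first.2} and \eqref{slowmotion}, which is resolved by running both buckling estimates simultaneously. The integrability of $E(v(t))$ is precisely what makes this closure possible with sharp rate $2-\epsilon$ and the $\delta^{2}$ error rather than the weaker $\delta$ error of Proposition \ref{prop:orweak2}. The weak-norm modification itself is benign because (ii) is formulated in $\norm{\cdot}_{0}$ and \eqref{u3.4} lets us dominate $\norm{\cdot}_{0}$ by $\norm{\cdot}_{1}$ wherever the triangle inequality is needed.
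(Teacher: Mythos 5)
Your proposal reproduces the buckling argument of \cite[Lemma 2.1, Lemma 2.2]{OR}, which is indeed what the paper does: the paper's proof simply notes that the argument from \cite{OR} carries over once one inserts the triangle inequality $\norm{v(s)-v(t)}_{0}\leq\sqrt{2\E(s)}+\sqrt{2\E(t)}+\norm{u(t)-u(s)}_{1}$ (using \eqref{u3.4} to pass from $\norm{\cdot}_0$ to $\norm{\cdot}_1$), so the overall route is the same. Your differential inequality, exponential weight, Lipschitz substitution, triangle inequality, and Young absorption all match the structure of \cite{OR}.

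One thing to flag: you describe a ``mutual dependence'' between \eqref{u.first.2} and \eqref{slowmotion} that must be ``resolved by running both buckling estimates simultaneously,'' but this circularity is not actually present in the paper's (or \cite{OR}'s) argument. The proof of exponential decay \eqref{u.first.2} does not use \eqref{slowmotion}; it only needs the raw gradient flow bound $\norm{u(t)-u(s)}_{1}^{2}\leq(t-s)\bigl(E(u(s))-E(u(t))\bigr)$, which—together with the Lipschitz condition and the triangle inequality—gives a self-closing quadratic inequality in $\norm{u(t)-u(s)}_{1}$ that is dispatched by Young's inequality. Only afterwards does one use the resulting smallness $\E(t)\lesssim\delta^{2}$ (for $t$ past the initial layer) to deduce the slow-motion estimate, again via the gradient flow inequality over $[t_{1},t]$ and Young. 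So the logic is one-directional. Your variant of \eqref{slowmotion}, obtained by summing over unit intervals and invoking the exponential decay to make $\sum_{k}\sqrt{\E(s+k)}$ geometric, is a valid alternative but more work than the direct estimate $\norm{u(t)-u(t_{1})}_{1}^{2}\lesssim(t-t_{1})\delta^{2}+(t-t_{1})\delta\norm{u(t)-u(t_{1})}_{0}$ followed by Young; either way the same conclusion results.
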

The proofs of both propositions are given in section \ref{S:meta}.
\subsection{Postprocessing: small dissipation and simple zeros}\label{ss:post}
In order to deduce that the solution has simple zeros after the energy has relaxed, we derive the following differential inequality for the dissipation.
\begin{lemma}\label{l:diss1}
  There exist $\ell_1,\eps\in\R^+$, such that if $v\in \N_N(\ell_1)$ for some $N\in\mathbb{N}$ and the solution $u$ of the Cahn-Hilliard equation \eqref{ch} satisfies $\norm{u-v}_\infty\leq \eps$, then
  \begin{align}
    \frac{\dd}{\dd t}D\lesssim D.\label{d1.down}
  \end{align}
\end{lemma}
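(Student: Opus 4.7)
My plan is to pass to the chemical potential $\mu := -u_{xx}+G'(u)$. Under this change of variables the Cahn--Hilliard equation \eqref{ch} reads $u_t=\mu_{xx}$ and the dissipation becomes $D=\int \mu_x^2\,\dd x$. The first step is a straightforward differentiation in time: using $\mu_t = -u_{xxt}+G''(u)u_t = -\mu_{xxxx} + G''(u)\mu_{xx}$ together with periodic integration by parts (applied twice), I would derive
\begin{align*}
  \frac{\dd D}{\dd t}= -2\int \mu_{xxx}^2\,\dd x -2\int G''(u)\mu_{xx}^2\,\dd x.
\end{align*}
The first term already has the right sign, so the whole problem reduces to absorbing the $G''$-term against this good term and against $D$ itself.

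The second step is an interpolation. One more integration by parts gives the identity $\int \mu_{xx}^2\,\dd x = -\int \mu_x \mu_{xxx}\,\dd x$, so Young's inequality with a free parameter lets me bound $\int \mu_{xx}^2$ by a convex combination of $\int \mu_{xxx}^2$ and $\int \mu_x^2 = D$. Choosing the small parameter proportional to $1/\|G''(u)\|_{L^{\infty}}$ absorbs the $\int \mu_{xxx}^2$ contribution into the first (nonpositive) term, leaving a clean bound $\frac{\dd D}{\dd t}\leq M^2\, D$ where $M:=\|G''(u)\|_{L^\infty(\torus_\Lambda)}$.

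The final step is to show that $M$ is bounded, which is where the closeness hypothesis is used. A standard truncation argument shows that every energy-optimal $v\in\N_N(\ell_1)$ satisfies $\|v\|_\infty\leq 1$: replacing $v$ with $\max(-1,\min(v,1))$ on any region where $|v|>1$ would strictly decrease both the gradient term (by flattening) and the potential $G$ (since $G>0$ outside $\pm 1$), contradicting minimality. Combined with $\|u-v\|_\infty\leq \eps$, this yields $\|u\|_\infty\leq 1+\eps$ and therefore $M\leq \sup_{|s|\leq 1+\eps}|G''(s)|<\infty$ by the $C^2$ hypothesis on $G$. Neither $\ell_1$ nor $\eps$ plays a substantial role beyond this: $\ell_1$ only needs to be large enough that $\N_N(\ell_1)$ is meaningfully nonempty, and the constant in $\lesssim$ depends on $\eps$ through $M$. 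The only technical caveat is that the two successive integrations by parts implicitly require $u\in H^5$ in space, which is provided for free by parabolic smoothing for the periodic Cahn--Hilliard equation, so there is no real obstacle in the argument.
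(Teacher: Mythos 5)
Your proposal is correct, and it takes a genuinely different and more elementary route than the paper. Both arguments start from the same identity
\begin{align*}
  \frac{\dd D}{\dd t}=-2\int \mu_{xxx}^2\,\dd x -2\int G''(u)\mu_{xx}^2\,\dd x,\qquad \mu:=-u_{xx}+G'(u),
\end{align*}
(the paper writes $h=-\mu_{xx}$). From there the paths diverge. The paper localizes near the zeros of $v$ with a partition of unity, projects $\eta h$ onto the kernel direction $v_{\infty x}$, and invokes the linearized Allen--Cahn energy gap estimate (lemma \ref{l:oyh}) to control the indefinite sign of $G''$ near zeros; the $L^\infty$ closeness to $v$ is used structurally, both to force $G''(u)\geq 1/C_1$ on the overlap regions and to compare $G''(u)$ to $G''(v_\infty)$ near zeros. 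You bypass all of this with the one-dimensional interpolation $\int\mu_{xx}^2\,\dd x=-\int\mu_x\mu_{xxx}\,\dd x\leq\norm{\mu_x}_{L^2}\norm{\mu_{xxx}}_{L^2}$, so that $-2\int G''(u)\mu_{xx}^2\,\dd x\leq M\eps_0\int\mu_{xxx}^2\,\dd x+\frac{M}{\eps_0}D$ with $M:=\norm{G''(u)}_{L^\infty}$; choosing $\eps_0=2/M$ makes the good term swallow the $\mu_{xxx}$ contribution completely and leaves $\frac{\dd D}{\dd t}\leq\tfrac{M^2}{2}D$. Your use of the hypothesis is thus much weaker: $\ell_1$ plays essentially no role, and $\eps$ is used only via the (correct) truncation bound $\norm{v}_\infty\leq 1$ to ensure $M$ is a universal constant depending on $G$. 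What the paper's heavier machinery buys is consistency with the spectral-gap framework used for the other dissipation estimates (e.g.\ lemma \ref{linearizeddiss}) and for its analogue in \cite{OW}; what your argument buys is brevity, robustness in $\eps$, and independence from the structure of the kernel of the linearized operator. The regularity needed for the integrations by parts ($u\in H^6$ in space, not $H^5$, since $\mu_{xxxx}$ involves six $x$-derivatives of $u$) is, as you say, supplied by parabolic smoothing for $t>0$, so this is not an obstruction.
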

We use this inequality to deduce from a small energy gap a bound on the dissipation.
\begin{lemma}\label{l:diss3}
There exist $\ell_1,\,\gamma\in\R^+$ such that if $\ell\geq \ell_1$ and $\eb\leq \gamma$ on $[s,t]$ for some $t\geq s+1$, then
\begin{align*}
  \max_{[s+1,t]}D\lesssim \gamma.
\end{align*}
\end{lemma}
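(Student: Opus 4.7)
The plan is to combine three ingredients: the energy dissipation identity $\dd\eb/\dd t=-D$ from Lemma \ref{L2b}, the integrated smallness of $D$ that this identity yields, and the differential inequality $\dd D/\dd t\lesssim D$ of Lemma \ref{l:diss1}, which will allow us to promote a pointwise bound on $D$ to a uniform one.

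First I would verify that Lemma \ref{l:diss1} is applicable on the whole interval $[s,t]$. The hypothesis $\eb\leq \gamma$, together with \eqref{L1b.5}, yields $\E\lesssim \gamma+\exp(-C\ell)$. By choosing $\gamma$ small and $\ell_1$ large we can invoke the ``moreover'' clause of Lemma \ref{l:eedraw} with universal constants, so that $\norm{u-v}_{H^1}^2\lesssim \gamma+\exp(-C\ell)$. The one-dimensional Sobolev embedding $H^1\hookrightarrow L^\infty$ then gives $\norm{u-v}_\infty\lesssim \sqrt{\gamma+\exp(-C\ell)}$, which can be made smaller than the threshold $\eps$ of Lemma \ref{l:diss1} by taking $\gamma$ small and $\ell_1$ large.

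Next, integrating \eqref{L2b.1} over $[s,t]$ yields
\begin{align*}
  \int_s^t D(\tau)\,\dd\tau = \eb(s)-\eb(t)\leq \gamma.
\end{align*}
Hence for every $\tau_0\in [s+1,t]$, the mean value theorem applied to the integral over $[\tau_0-1,\tau_0]\subset [s,t]$ supplies some $\tau^*\in[\tau_0-1,\tau_0]$ with $D(\tau^*)\leq \gamma$. Combining this pointwise bound with the Gronwall-type consequence of $\dd D/\dd t\leq C\,D$ from Lemma \ref{l:diss1}, we obtain
\begin{align*}
  D(\tau_0)\leq D(\tau^*)\exp\bigl(C(\tau_0-\tau^*)\bigr)\leq \gamma\, e^{C}\lesssim \gamma.
\end{align*}
Since $\tau_0\in[s+1,t]$ was arbitrary, this gives the claimed bound $\max_{[s+1,t]} D\lesssim \gamma$.

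The main obstacle, and really the only delicate point, is justifying the hypothesis of Lemma \ref{l:diss1} uniformly on $[s,t]$: one has to arrange that the constants in the passage from $\eb$ to $\norm{u-v}_\infty$ are universal, which is why the smallness-regime statement in Lemma \ref{l:eedraw} is essential. Once this is in place, the rest is a standard ``average-to-pointwise'' argument via the differential inequality for $D$.
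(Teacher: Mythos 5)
Your proof is correct and uses the same three ingredients as the paper's: the ``moreover'' clause of lemma~\ref{l:eedraw} together with \eqref{L1b.5} to justify the hypotheses of lemma~\ref{l:diss1}, the integrated form of \eqref{L2b.1} to obtain $\int_s^t D\lesssim\gamma$, and the differential inequality \eqref{d1.down} to propagate. The only difference is cosmetic: you apply a sliding unit window $[\tau_0-1,\tau_0]$ and Gronwall's exponential estimate at each point, whereas the paper picks a single $t_1:=\argmin_{[s,s+1]}D$ and $t_2:=\argmax_{[s+1,t]}D$, writes $D(t_2)=D(t_1)+\int_{t_1}^{t_2}D_\tau\,\dd\tau$, and bounds the integral directly by $C\int_{t_1}^{t_2}D\lesssim\gamma$ without ever exponentiating; this avoids any need to keep the interval length bounded. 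One small nit: you write $\int_s^t D=\eb(s)-\eb(t)\leq\gamma$, but $\eb(t)$ can be slightly negative (of order $\exp(-C\ell)$ by \eqref{L1b.5}), so the right statement is $\lesssim\gamma$; the paper sidesteps this by bounding $|\eb(s)|+|\eb(s+1)|\leq 2\gamma$. This does not affect the conclusion.
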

Smallness of the the dissipation then implies simple zeros.
\begin{lemma}\label{l:diss2}
  There  exist $\ell_1,\eps\in\R^+$ such that $v\in \N_N(\ell_1)$ and $\E+D\leq \eps$ implies that $u$ has exactly $N$ simple zeros.
\end{lemma}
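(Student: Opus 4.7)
The plan is to show separately that (a) $u$ has no zeros away from small fixed neighborhoods of the zeros $x_1,\ldots,x_N$ of $v$, and (b) within each such neighborhood $u$ is strictly monotone, hence has exactly one simple zero. Step (a) is immediate from $L^\infty$-closeness of $u$ to $v$; the content is step (b), for which I need a local $L^\infty$ bound on $u_x-v_x$.

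By lemma \ref{l:eedraw} in the universal-constant regime (triggered by $\E\leq\gamma$, which we may enforce by choosing $\eps\leq\gamma$), one has $\|u-v\|_{H^1}^2\lesssim \E\leq\eps$ and hence $\|u-v\|_\infty\lesssim \sqrt\eps$ by 1D Sobolev embedding. Using the first integral $v_x^2/2-G(v)\equiv c_i$ on intervals between zeros of $v$ together with $v_{\max}\to\pm 1$ as $\ell\to\infty$, one obtains $|v_x(x_i)|\to \sqrt{2G(0)}$ as $\ell_1\to\infty$. Fix $\ell_1$ large and $\delta_0>0$ small so that, setting $U_i:=(x_i-\delta_0,x_i+\delta_0)$, the bounds $|v_x|\geq c_1$ on each $U_i$ and $|v|\geq c_0$ on $\torus_\Lambda\setminus \bigcup_i U_i$ hold for universal constants $c_0,c_1>0$. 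For $\eps$ small enough that $\|u-v\|_\infty<c_0$, zeros of $u$ outside $\bigcup_i U_i$ are ruled out.

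It remains to show $\|u_x-v_x\|_{L^\infty(U_i)}<c_1$, since then $u_x$ will have the same (nonzero) sign as $v_x$ on $U_i$, making $u$ strictly monotone there; combined with $u(x_i\pm\delta_0)$ having opposite signs (again via the $L^\infty$-closeness), this yields exactly one simple zero of $u$ in each $U_i$. By the 1D Gagliardo--Nirenberg inequality on the bounded interval $U_i$,
\begin{align*}
  \|u_x-v_x\|_{L^\infty(U_i)}^2\lesssim \|u_x-v_x\|_{L^2(U_i)}\bigl(\|u_x-v_x\|_{L^2(U_i)}+\|u_{xx}-v_{xx}\|_{L^2(U_i)}\bigr),
\end{align*}
and the first factor is $\lesssim \sqrt\eps$. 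The main obstacle is a $\Lambda$-uniform bound on $\|u_{xx}-v_{xx}\|_{L^2(U_i)}$; for this I would test the equation $u_{xx}=G'(u)-\mu$, where $\mu:=-u_{xx}+G'(u)$ is the chemical potential, against $u_{xx}$ and integrate by parts globally on $\torus_\Lambda$ to get
\begin{align*}
  \int u_{xx}^2\,\dd x = -\int G''(u)\,u_x^2\,\dd x + \int u_x\,\mu_x\,\dd x \lesssim 1 + \sqrt{D} \lesssim 1,
\end{align*}
using the standard $L^\infty$-bound on $u$ (from $E(u)\leq C_E$), $\|u_x\|_{L^2}^2\leq 2E(u)$, and $\|\mu_x\|_{L^2}^2=D\leq\eps$. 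Since $\|v_{xx}\|_{L^\infty}\leq \|G'\|_\infty$, we get $\|u_{xx}-v_{xx}\|_{L^2(U_i)}\lesssim 1$. Plugging back, $\|u_x-v_x\|_{L^\infty(U_i)}^2\lesssim \sqrt\eps$, which for $\eps$ small is $<c_1^2$, completing the proof.
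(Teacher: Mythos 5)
Your proof is correct in spirit and follows a genuinely different route from the paper's. The paper does not try to compare $u_x$ with $v_x$ at all: it works with the discrepancy $\xi:=\tfrac12 u_x^2-G(u)$, showing $\norm{\xi}_{L^\infty}\lesssim D^{1/6}+\Lambda^{-1/3}$ by a two-scale cutoff estimate on $g=u_{xx}-G'(u)$ combined with $\xi_x=g\,u_x$. Small discrepancy then makes $u$ solve $u_x=\pm\sqrt{2(G(u)+\xi)}$ near each zero, which forces a simple zero and (via the Modica--Mortola lower bound) caps the number of zeros at $N$. Your approach instead localizes around the zeros of $v$, rules out zeros in the complement by $L^\infty$-closeness of $u$ to $v$, and establishes monotonicity of $u$ in each small neighborhood via a Gagliardo--Nirenberg interpolation bootstrapped from a global $\dot H^2$-bound on $u$ — a clean and elementary alternative. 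The global identity $\int u_{xx}^2=-\int G''(u)u_x^2+\int u_x\mu_x$ you use is correct under periodic boundary conditions, and the resulting $O(1)$ bound is legitimate since $E(u)\leq E(v)+\eps\lesssim N$ and $\|u\|_\infty\lesssim 1+E(u)$. One thing to patch: $v_x$ has a (exponentially small) jump $\alpha_i=v_x(x_i^+)-v_x(x_i^-)$ at each zero, so $v\notin H^2(U_i)$; your Gagliardo--Nirenberg step should be applied separately on $(x_i-\delta_0,x_i)$ and $(x_i,x_i+\delta_0)$, where $v_{xx}=G'(v)$ is classical and bounded — this gives the same conclusion. A benefit of the paper's discrepancy argument is that it never needs to track $v$ or its kinks; a benefit of your argument is that it is more hands-on and directly exhibits the nonvanishing of $u_x$ near each transition. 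Both rely essentially on smallness of $D$, which you use for the $\dot H^2$-bound and the paper uses for the $L^\infty$-bound on $g$.
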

We prove lemmas \ref{l:diss1}, \ref{l:diss3}, and \ref{l:diss2} in subsection \ref{ss:disspf}, below.
\subsection{Proof of  theorem \ref{t:main1} and corollary \ref{cor:wid}}\label{ss:main}
We are now ready to prove our main theorem.

\begin{proof}[Proof of theorem \ref{t:main1}]
We remark that according to \eqref{L1b.1}, it suffices for \eqref{egap1}, \eqref{t.2bb} and \eqref{uve1} to establish the upper bound for the energy gap.

\step{Step 0.} In this preliminary step we collect the facts that we will need about the motion of and distance between zeros. First we remark that, according to \eqref{ellbig}, we may choose $\Lambda_1$ big enough so that
\begin{align}
\ell_0\geq \frac{\Lambda_1}{C_1}\quad\text{is as large as necessary}
\end{align}
and in particular (a) so that $2\ell_0\geq \ell_1$ for the constant $\ell_1$ from the above auxiliary lemmas, and (b) so that lemma \ref{l:uv} gives control on the initial zeros of $u$ in the form
\begin{align}
  \abs{c(0)-\bar{c}}\leq\frac{\ell_0}{8},\quad \ell(0)\geq\frac{3\ell_0}{4},\label{cell}
\end{align}
where $\ell(t)$ denotes the distance between the closest consecutive zeros in $c(t)$.

The proof will rely on a buckling argument. A priori, we will need to control changes in $\bar{H}(t)$ and $\ell(t)$. To this end, we
define the maximal displacement of the zeros as
\begin{align*}
  \Delta c(t):=\sup_{0\leq t'\leq t}|c(t')-c(0)|,
\end{align*}
introduce constants $C_3, C_4\in (1,\infty)$ (to be specified later) depending only on universal constants and $\bar{H}_0$, $E_0$, and define the times:
\begin{align}
  T_1&:=\inf\left\{t\geq 0\colon \bar{H}(t)\geq C_3\right\},\label{t1}\\
  T_2&:=\inf\left\{t\geq 0 \colon \Delta c(t)\geq C_4  \right\}.\label{t2}
\end{align}
Note that according to \eqref{t1} and $E(u(t))\leq E_{0}$, the function $u(t)\in \M_N(\ell_0,C_3,E_0)$ and hence,
by lemma \ref{l:uv},  $v(t)$ is well-defined and lemma \ref{L1b} applies for all $t\in[0,T_1]$.
We also choose $\ell_0$ so that
\begin{align}
\ell_{0}\geq\max\left\{8C_4,2C_{2}C_{4}\right\},\label{maychoose}
\end{align}
where $C_2$ is the maximum of the corresponding constants from lemmas \ref{l:vdist} and \ref{l:lip}.
On the one hand, we have for $t\leq T_2$ the estimate
\begin{align}
	\ell(t)\geq \ell(0)-|\Delta c|\geq \ell(0)-C_4\geq \frac{3\ell(0)}{4}\overset{\eqref{cell}}\geq \frac{\ell_{0}}{2},\label{ell34}
\end{align}
which according to \eqref{ellbig}, the second item in \eqref{cell}, and \eqref{ell34}, implies
\begin{align}\label{scaleell}
	\ell_0\sim \ell(0)\sim \ell(t)\sim \Lambda.
\end{align}
Note that by \eqref{ell34} we have that $v(t)\in \N_{N}(\frac{\ell_{0}}{2})$.
On the other hand, we control the motion of the zeros  for $t\leq T_2$ in terms of the total interval length via
\begin{align}
  \abs{c(t)-c(0)}\leq C_4\overset{\eqref{maychoose}}\leq \frac{\ell_{0}}{2C_2},\label{clt}
\end{align}
so that lemmas \ref{l:vdist} and \ref{l:lip} for $\ell=\frac{\ell_{0}}{2}$ in the definition of $\norm{\cdot}_{\N}$ apply on $t\leq T_2$.
%Finally, we remark that according to \eqref{scaleell},  the estimate \eqref{L1b.3}  improves to
%\begin{align}
%   |c-\bar{c}|^2&\lesssim(H\E)^{\frac{1}{2}}+(|c-\bar{c}|+1)\E+\frac{H}{\Lambda}+\exp(-C\Lambda),\label{L1b.3b}
%\end{align}
%which will be helpful in step 1 below.

We now control two phases of the evolution. In the initial phase, we will use a lower bound on the energy gap and the relaxation framework (in particular lemma \ref{l:diffeq}, above) to establish algebraic decay of $\E$ and a bound on $\bar{H}$.
To this end, we define
\begin{align}
  T_3:=\inf\left\{t\geq 0\colon \frac{\bar{H}(t)+1}{\Lambda^2} \geq  \E(t) \right\}\label{t3}
\end{align}
and
$
  T_*:=\min\{T_1, T_2, T_3\}.
$
We will show in step 1 that
\begin{align}
  T_*<\min\{T_1, T_2\},\quad T_*\lesssim \left(\bar{H}_0+\Ez+\Ez^{3}\right)\Lambda^2,\quad\text{and}\quad \E(T_*)\lesssim \Lambda^{-2}.\label{boots}
\end{align}
The timescale $s_1$ in the statement of the theorem is then defined as
\begin{align*}
  s_1:=T_*.
\end{align*}
Once the energy gap has become small, we will use the solution at $T_{*}$ as \emph{initial data} for the EED framework of \cite{OR} (in particular proposition \ref{prop:orweak2}) and establish and make use of the exponential decay in time of the energy gap
on the interval $[T_*,T_{**}]$ for
\begin{align}
  T_{**}:=\min\left\{T_1,T_2,  \Lambda^{\frac{1}{2}}\exp(C_G\ell_0)\right\}.\label{t4}
\end{align}
As a consequence we will obtain in step 2 that
\begin{align}
  T_{**}<\min\{T_1,\,T_2\}\quad\text{and hence}\quad T_{**}= \Lambda^{\frac{1}{2}}\exp(C_G\ell_0).\label{boots2}
\end{align}

\step{Step 1.} Here we consider the evolution on $[0,T_*]$. We set for convenience
\begin{align}
  c_*:=\sup_{t\in[0,T_*]}\abs*{c(t)-\bar{c}}+1.\label{cstar}
\end{align}
Notice that
\begin{align}\label{cstar2}
	1\leq c_{*}\leq \Lambda+1
\end{align}
and that, by definition of $T_*$ and lemma \ref{l:uv}, $c_*$ is bounded in terms of $C_3$, $C_4$.
We will show that, in fact, $c_*$ is bounded independently of $C_3$, $C_4$, and $\Lambda$, and that there holds
\begin{eqnarray}
\E(t)&\le&2\Ez,\label{t.1}\\
\E(t)&\lesssim&\left(\bar{H}_0+\Ez+\Ez^3\right)t^{-1},\label{t.2}\\
%(c(t)-\bar{c})^2&\lesssim&\left(\bar{H}_0+\Ez+\Ez^3\right)^\frac{1}{2}\Ez^\frac{1}{2},\label{t.3}\\
(c(t)-\bar{c})^2&\lesssim&\left(\bar{H}_0+\Ez+\Ez^3\right)^\frac{1}{2}\E(t)^{\frac{1}{2}},\label{t.4}\\
\bar{H}(t)&\lesssim& \bar{H}_0+\Ez+\Ez^3.\label{t.5}
\end{eqnarray}
Notice that we may assume without loss of generality that
\begin{align}
  \Ez\geq \frac{\bar{H}_0+1}{\Lambda^2},\label{e0a}
\end{align}
since otherwise we may set $T_*=0$.
As a consequence of \eqref{L1b.5} and \eqref{scaleell}, we have
\begin{align}\label{deltagap}
	|\E-\eb|\lesssim\exp(-C\Lambda) \qquad\text{on $[0,T_*]$}.
\end{align}
Using \eqref{t3} and \eqref{L2b.1} leads to
\begin{align}\label{scalegap}
	\E\sim\eb, \qquad
	\eb\gtrsim \Lambda^{-2},\qquad\text{and}\qquad
	\eb\leq\eb(0)
	\qquad\text{on $[0,T_*]$},
\end{align}
which for $\Lambda$ large, implies \eqref{t.1}.
%which together with \eqref{t3} leads to
%\begin{align}\label{scalegap}
%	\E\sim\eb \qquad\text{on $[0,T_*]$}
%\end{align}
%and
%\begin{align}
%  \eb\gtrsim \Lambda^{-2} \quad\text{on }[0,T_*].\label{nicegap}
%\end{align}
%We also observe that \eqref{L2b.1} implies
%\begin{align}\label{eintermed}
%	\eb\leq\eb(0)\qquad\text{on $[0,T_*]$},
%\end{align}
%so that we obtain from \eqref{deltagap} and \eqref{nicegap} that
%\begin{align}
%	\E\leq 2\Ez
%	\qquad\text{on $[0,T_*]$}
%\end{align}
%for $\Lambda$ large, which is \eqref{t.1}.

We turn now to verifying the conditions of lemma \ref{l:diffeq} for the Cahn--Hilliard evolution on $[0,T_*]$. We allow a dependence on $C_3$ and $C_E$ and at the end of the step verify that $C_3$ can be chosen to depend only on $C_H$ and $C_E$.
The first item in \eqref{d1} is \eqref{L2b.1}.
The first item in \eqref{a1} hence follows from  \eqref{L1b.4} together with \eqref{scaleell} and
\begin{align}
	\exp(-C\Lambda)\overset{\eqref{scalegap}}\ll \E.\label{stern}
\end{align}
The second item in \eqref{a1} follows from \eqref{L1b.3}, \eqref{scaleell}, \eqref{scalegap}, \eqref{stern}, and
\begin{align}
  \frac{\bar{H}}{\Lambda}\overset{\eqref{t3}}\leq  (\bar{H}\E)^{\frac{1}{2}}.\label{hle}
\end{align}
It remains to establish the second item in \eqref{d1}, which will follow from \eqref{L2b.2}, \eqref{scaleell}, and \eqref{scalegap}. To this end, it suffices to observe that
\begin{align}
  D^{-1}\overset{\eqref{L1b.4}}\lesssim \bar{H}\E^{-2}+c_*^2\E^{-1}\overset{\eqref{t3}}\lesssim \frac{\Lambda^4}{\bar{H}+1}\lesssim \Lambda^4 \qquad\text{on $[0,T_*]$}.\label{diss1}
\end{align}
Combining this estimate with \eqref{scalegap} implies
\begin{align*}
  \exp(-C\Lambda)\lesssim \eb^\frac{3}{4}D^\frac{1}{4}\Lambda^\frac{5}{2}\exp(-C\Lambda)\leq \eb^\frac{3}{4}D^\frac{1}{4}
\end{align*}
for $\Lambda$ sufficiently large.

Hence we may apply lemma \ref{l:diffeq}. Note that according to \eqref{L3.3b}, \eqref{n.3}, \eqref{scalegap}, and Young's inequality, there hold
\begin{align}
  c_*^2\lesssim  \left(\bar{H}_0\Ez\right)^\frac{1}{2}+\Ez^2+1,\qquad \bar{H}_0+c_*^2\Ez\lesssim \bar{H}_0+\Ez+\Ez^3.\label{box1}
\end{align}
Consequently estimates \eqref{t.2}-\eqref{t.5} follow from
\eqref{L3.3a}-\eqref{L3.3c} and \eqref{scalegap}.

Finally, we use  estimates \eqref{t.4} to \eqref{t.5} (together with lemma \ref{l:uv}) to deduce that \eqref{boots} holds for $C_3$ and $C_4$ sufficiently large with respect to $C_H$ and $C_E$. From \eqref{boots} and \eqref{L1b.1} we deduce \eqref{uve2} for $s_1:=T_*$. The bound on $s_1$ follows from \eqref{t.2}.\\

\step{Step 2.} Here we consider the evolution on $[T_*, T_{**}]$ and show \eqref{t.2bb},
 \eqref{t.4b}, and \eqref{t.5b}.
%\bigl(c(t)-c(T_*)\bigr)^2&\lesssim_{\depEH}&    \frac{1}{\Lambda},\label{t.4b}\\
%\bar{H}(t)&\lesssim_{\depEH}& 1,\label{t.5b}
%\end{eqnarray}
%where $C_{\mathrm{ed}}$ is the constant from \eqref{eed}.
Analogously to in step 1, we obtain \eqref{boots2} from \eqref{t.4b} and \eqref{t.5b} for $C_3$ and $C_4$ sufficiently large.
%We establish in addition slow motion in the sense that, for $T_*\leq s\leq t\leq T_{**}$, there holds
%\comment{I don't think we claim or need this.}
%\begin{align}
%  \norm{u(t)-u(s)}_{\dot{H}^{-1}}^2&\lesssim  (1+\delta t).\label{slowu}
%%  \Lambda \E(s)^{\frac{1}{2}} +\Lambda^{-\frac{1}{2}}\exp(-C_G\ell_0)\left(t-s+\Lambda^2\right).\label{slowu}
%\end{align}
We note for reference below that, according to \eqref{eed} and \eqref{uve2}, there holds
\begin{align}
	\frac{1}{\Lambda^2}\norm{u(T_*)-v(T_*)}_{\N}^2+  \E(T_*)\lesssim \frac{1}{\Lambda^2}.\label{etstar}
\end{align}
%That  \eqref{t.5b} holds at time $T_*$ follows from \eqref{t.5}. Consequently it suffices for  \eqref{t.5b} to show that
%\begin{align}
% \norm{u(t)-u(T_*)}_{\dot{H}^{-1}}^2&\lesssim 1.\label{usuff}
%\end{align}
We will carry out this step by applying the metastability framework of proposition \ref{prop:orweak2}.
To this end, we note that by \eqref{uve2}, \eqref{L2b.1}, the Lipschitz condition \eqref{lipschitz1}, and \eqref{scaleell} we may choose $\Lambda_{1}$ large enough
to ensure that the energy gap is at $T_*$ is smaller than the constant $\gamma$ from lemma \ref{l:eedraw}.
%By lemma \ref{L1b} we may hence assume that after time $T_{*}$ the constants only depend on $N$.
According to \eqref{ell34}, definition of $T_{**}$, and lemma \ref{l:eed}, one has \eqref{eed} on $[T_*,T_{**}]$, with a universal constant $C_{\mathrm{ed}}$.
Furthermore, by lemma \ref{l:lip} and \eqref{clt}, one also has a Lipschitz condition on $[T_*,T_{**}]$ of the form
\begin{align*}
  \abs*{E\bigl(v(t)\bigr)-E\bigl(v(s)\bigr)}\lesssim \hat{\delta} \norm{v(t)-v(s)}_{\N},
\end{align*}
with
\begin{align}
  \hat{\delta}\lesssim \min\{\ell(t),\ell(s)\}^{-\frac{1}{2}}\exp\bigl(-C_G\min\{\ell(t),\ell(s)\}\bigr).\label{delinit}
\end{align}
Estimating the distances between zeros via the additive estimate \eqref{ell34} and the multiplicative estimate \eqref{scaleell} leads to
%\comment{dependence on $\bar H_{0}$?}
\begin{align}\label{deodda}
\hat{\delta}\lesssim \Lambda^{-\frac{1}{2}}\exp\bigl(-C_G\ell(0)\bigr)\exp\bigl(C\Delta c(t)\bigr)
 \overset{\eqref{t2}} \lesssim \Lambda^{-\frac{1}{2}}\exp\bigl(-C_G\ell(0)\bigr)\exp(CC_4).
\end{align}

To apply proposition \ref{prop:orweak2}, we rescale time and energy via $\tilde{t}=t/(C_\mathrm{ed}\Lambda^2)$, $\tilde{E}=(C_{\mathrm{ed}}\Lambda^2) E$ and deduce for
$t\in [T_*, T_{**}]$ that
\begin{align}
	\abs*{E\bigl(u(t)\bigr)-E\bigl(v(T_{*})\bigr)}&\lesssim  \Lambda^{-2}\exp\left(-\frac{2(t-T_{*})}{C_{\mathrm{ed}}\Lambda^2}\right)
	+\hat{\delta}\label{ephase2.b}
\end{align}
and for all $T_*\leq s\leq t\leq T_{**}$ that
\begin{align}
\norm{v(t)-v(s)}_{\N}+\norm{u(t)-u(s)}_{\dot{H}^{-1}}\lesssim  \bigl(1+\hat{\delta}t\bigr)^{\frac{1}{2}}.
\label{slowub.b}
\end{align}
%For $s=T_{*}$ this is \eqref{usuff}, so that we have established \eqref{t.5b}.
Our next step is to develop a $C_{3}$- and $C_4$-independent bound on $\Delta c$ and, consequently, on $\hat{\delta}$. By \eqref{t.5} and \eqref{slowub.b} we then also obtain
a $C_{3}$ and $C_{4}$ independent bound on $\bar{H}$.
To this end we will use continuity of solutions in $\dot{H}^{-1}$ and the estimate
\begin{align}
\norm{u(t)-u(T_*)}_{\dot{H}^{-1}}\overset{\eqref{slowub.b},\eqref{deodda}, \eqref{t4}}\lesssim \Bigl(1+\exp\bigl(C\abs{c(t)-c(T_*)}\bigr)\Bigr)^{\frac{1}{2}}.\label{h1.1.b}
\end{align}
%for $\ell_0$ sufficiently large with respect to $\bar{H}_0,\,E_0,\,C_4$ and all $t\in[T_*,T_{**}]$.
Now we use lemma \ref{l:vdist}, \eqref{scaleell}, and \eqref{etstar} to bound
\begin{eqnarray}
\lefteqn{  \abs{c(t)-c(T_*)}}\notag\\
&\lesssim&\Lambda^{-\frac{1}{2}}\norm{v(t)-v(T_*)}_\N\notag\\
  &\leq& \Lambda^{-\frac{1}{2}}\left(\norm{v(t)-u(t)}_\N+\norm{u(t)-u(T_*)}_\N+\norm{u(T_*)-v(T_*)}_\N\right)\notag\\
  &\overset{\eqref{etstar},\eqref{v.2}}{\lesssim} &\Lambda^{-\frac{1}{2}}\left( \norm{v(t)-u(t)}_\N +\norm{u(t)-u(T_*)}_{\dot{H}^{-1}}+1\right).\label{cstep.1}
\end{eqnarray}
To estimate the first term on the right-hand side, we use the simplistic estimate
\begin{eqnarray*}
  \norm{v(t)-u(t)}_\N^2&\overset{\eqref{eed}}\lesssim &\Lambda^2\Bigl(E\bigl(u(t)\bigr)-E\bigl(v(t)\bigr)\Bigr)\\
  &\overset{\eqref{L2b.1}}\leq & \Lambda^2 \Big(E\bigl(u(T_*)\bigr)-E\bigl(v(T_*)\bigr)+E\bigl(v(T_*)\bigr)-E\bigl(v(t)\bigr)\Big)\\
  &\overset{\eqref{etstar}}{\lesssim}& 1 +\abs*{c(t)-c(T_*)}\exp(-C\Lambda),
\end{eqnarray*}
with a nonsharp constant in the exponential.
%Substituting into \eqref{cstep.1} and applying Young's inequality is enough to conclude
%\begin{align}
%\abs{c(t)-c(T_*)}\lesssim \Lambda^{-\frac{1}{2}}\left(\norm{u(t)-u(T_*)}_{\dot{H}^{-1}}+\left(\bar{H}_0+\Ez^{3}+1\right)^{\frac{1}{2}}\right).\label{h1.2}
%\end{align}
%Substituting \eqref{usuff} into \eqref{h1.2} leads to \eqref{t.4b} (after an application of Young's inequality).
Substituting into \eqref{cstep.1} and applying Young's inequality is enough to conclude
\begin{align}
\abs{c(t)-c(T_*)}\lesssim \Lambda^{-\frac{1}{2}}\left(\norm{u(t)-u(T_*)}_{\dot{H}^{-1}}+1\right).\label{h1.2}
\end{align}
Substituting \eqref{h1.2} into \eqref{h1.1.b} gives
\begin{align*}
  \norm{u(t)-u(T_*)}_{\dot{H}^{-1}}\lesssim 1+\exp\left(\frac{C \norm{u(t)-u(T_*)}_{\dot{H}^{-1}}}{\Lambda^{\frac{1}{2}}}\right).
\end{align*}
Continuity in $\dot{H}^{-1}$ and $\Lambda\gg 1$ implies
 \eqref{t.5b} and substituting \eqref{t.5b} back into \eqref{h1.2} leads to \eqref{t.4b}.
Finally we use the bound \eqref{t.4b} in the first inequality in \eqref{deodda} to estimate $\hat \delta$ by $\delta$ and use this estimate in \eqref{ephase2.b} and \eqref{slowub.b} to obtain uniform estimates on the energy gap $\E$.\\

%Plugging this estimate of $\hat \delta$ into \eqref{slowub.b} yields \eqref{t.5b}.\\
\step{Step 3.}
As explained in subsection \ref{ss:meta}, we improve from a $\delta$ neighborhood of the slow manifold to a $\delta^2$ neighborhood in \eqref{uve1}
by establishing uniqueness of the $N$ zeros of $u$ and applying proposition \ref{prop:orweak}. Indeed, we just showed that the energy gap is algebraically small
on $[T_*,\delta^{-1}]$.
According to lemmas \ref{l:diss3} and \ref{l:diss2}, it follows that $u$ has $N$ simple zeros on $[T_*+1,\delta^{-1}]$. Hence $E(v(t))$ is measurable and, in particular, integrable, and we may apply proposition \ref{prop:orweak}. We rescale as above and apply \eqref{u.first.2} for $\epsilon=1$.
Choosing
\begin{align*}
s_2=C(C_{H}, C_{E}, C_{1})\Lambda^2\ln\left((\Lambda\delta)^{-2}\right)
\end{align*}
gives \eqref{uve1}. The other estimates follow from \eqref{slowmotion} and lemma \ref{l:vdist}.
%Since in this step the proof works without a buckling argument and since $C_{\mathrm{ed}}$ is a universal constant, the constants in the estimates of step 3, save for \eqref{cest} which depends on $N$,
%are also independent of $C_{H}$, $C_{E}$ and $N$.
\end{proof}

\begin{proof}[Proof of corollary \ref{cor:wid}]
	The proof is a minor modification of step 2 above.
\end{proof}

\section{Auxiliary proofs}\label{S:appendix}
\subsection{Preliminary lemmas}\label{ss:prelim}
\begin{proof}[Proof of lemma \ref{l:uv}]
	For simplicity, let $\bar x_{i}=0$. We will show that $u$ has a zero within a neighborhood of $\bar x_{i}$ of size	$L=C \max\{\bar{H}^{\frac{1}{3}},\bar{H}^{\frac{1}{5}}\}$.
	Note that for any $C>0$ we can choose $\ell_{1}$ so large that
	\begin{align*}
		C\max\left\{\bar{H}^{\frac{1}{3}},\bar{H}^{\frac{1}{5}}\right\}\leq C\max\left\{C_{H}^{\frac{1}{3}},C_{H}^{\frac{1}{5}}\right\}
		\leq \ell_{1}.
	\end{align*}
	Assume that $u$ does not have a zero in $(-3L,3L)$. Then $u$ has fixed, say negative, sign on this interval.
	Let $\bar v$ be positive on $(0,\bar x_{i+1})$.
	Furthermore, let $\eta$ be a cutoff function with
	\begin{align*}
		\eta\equiv 1 \text{ on }(L,2L),\quad
		\abs{\eta}\leq 1,\quad
		\eta\equiv 0\text{ on }\R\setminus (0,3L),\quad\text{and}\quad
		\abs{\eta_{x}}\lesssim L^{-1}.
	\end{align*}
	We estimate
	\begin{align}
		\begin{split}\label{L_inequality}
			\MoveEqLeft
			\int_{L}^{2L}\bar v\eta\,\dd x
		%	\leq \int_{L}^{2L}(\bar v-u)\eta\,\dd x
			\leq \int(\bar v-u)\eta\,\dd x
			\leq \norm{u-\bar v}_{\dot H^{-1}}\norm{\eta_{x}}_{\dot H^{1}}
			\lesssim \left(\bar{H} L^{-1}\right)^{\frac{1}{2}}
		\end{split}
	\end{align}
	Define $x_{0}\in (0,\frac{\bar x_{i+1}}{2})$ such that $\bar v(x_{0})=\frac{1}{2}$. In case $L$ is so small that $x_{0}\in [\frac{3}{2}L,\frac{\bar x_{i+1}}{2})$, we obtain
	\begin{align*}
		\int_{L}^{2L}\bar v\eta\,\dd x\geq\int_{L}^{\min\{2L,x_{0}\}}\bar v\,\dd x
%		&\geq \sqrt{2(G(a)-G(M))}\left(\min\{4L^{2},x_{0}^{2}\}-L^{2}\right)\\
		\geqsim L^{2},
	\end{align*}
	since $\bar v\geqsim x$ on $(0,x_0)$.
	If $L$ is large enough so that $x_{0}\in (0,\frac{3}{2}L)$, on the other hand, we use monotonicity of $\bar v$ to estimate
	\begin{align*}
		\int_{L}^{2L}\bar v\eta\,\dd x\geqsim L.
	\end{align*}
	Combining the previous two estimates with \eqref{L_inequality} yields
	$
		L\lesssim \max\left\{\bar{H}^{\frac{1}{3}},\bar{H}^{\frac{1}{5}}\right\},
	$
	which contradicts the definition of $L$ for $C$ large enough.

	Because the energy of $v$ (and hence of $u$) on $(0,\ell)$ for $\ell$ large is bounded below by a positive constant, we obtain the energetic lower bound \eqref{ddistancezeroes}.
	Estimate \eqref{distance_uvc} then follows from
	\begin{eqnarray*}
		\norm{u-v}_{L^2}^2&\lesssim& \norm{u-\bar{v}}_{L^2}^2+\norm{\bar{v}-v}_{L^2}^2\\
		&\lesssim&\norm{u-\bar{v}}_{\dot H^{-1}}\left(\norm{u}_{\dot H^{1}}+\norm{\bar v}_{\dot H^{1}}\right)+\abs{\bar{c}-c}\norm{\bar{v}-v}_{L^{\infty}}^{2}\\
		&\overset{\eqref{ddistancezeroes}}\lesssim& \bar{H}^{\frac{1}{2}}\left(E(u)^{\frac{1}{2}}+1\right)+\bar{H}^{\frac{1}{3}}+\bar{H}^{\frac{1}{5}}.
	\end{eqnarray*}
\end{proof}
\subsection{Scaling of the weak norm}\label{ss:weak}
In this subsection we prove lemma \ref{l:vdist}. The main ingredient is the following approximation lemma, which says that functions of interest (which the reader can think of as differences of energy optimal profiles) can be well-approximated in the weak norm by sums of delta functions.

\begin{lemma}[Approximation by delta functions]\label{l:appx}
	For every $N_{1}\in \mathbb{N}$, $C_{1}\in (0,\infty)$, and $\epsilon\in (0,1)$ there exist $\ell_{1},C_{2}\in(1,\infty)$ such that
	for all $N\leq N_{1}$, $C\leq C_{1}$ and $\ell\geq \ell_1$ the following holds.
	Consider $2N$ points
	\begin{align}
		-\frac{\Lambda}{2}<x_1< y_1< x_2< y_2<\ldots <x_N< y_N\leq \frac{\Lambda}{2}\label{pts}
	\end{align}
	such that
	\begin{align}
		(y_i-x_i)\leq \frac{\ell}{C_2}\qquad\text{and}\qquad m_{i+1}-m_i\geq \ell\label{xy}
	\end{align}
	for all $i$, for
$
		m_i:=(x_i+y_{i-1})/2.
	$
	Consider a smooth and periodic function $f$ such that
	\begin{enumerate}
		\item[(i)]	
			for all $i\in\{1,\ldots, N\}$,
			\begin{align}\label{alphsize2}
				f(x)=\alpha_i\,w_i(x)\quad\text{on }(m_i,m_{i+1}),\qquad\text{with}\qquad
				\abs{y_i-x_i}&\leq C  \max_{i=1,\ldots,N}\abs{\alpha_{i}},
			\end{align}
%			with
%			\begin{align}
%				\abs{y_i-x_i}&\leq C  \max_{i=1,\ldots,N}\abs{\alpha_{i}},\label{}
%%			   \alpha_i&\lesssim \abs{y_i-x_i}+1.\label{alphsize}
%			\end{align}
			and
			\begin{align}
				\int_{m_i}^{m_{i+1}}w_{i}(x) \,\dd x&=1,\label{normedwi}\\
	    		\abs{w_{i}(x)}&\leq C\exp\left(-\frac{\dd(x,\{x_i,y_i\})}{C}\right)\quad\text{on }(m_{i},m_{i+1})\setminus (x_i,y_i).\label{expdecayf}
			\end{align}
		\item [(ii)]
			\begin{align}
				\int_{m_i}^{m_{i+1}}f^2(x)\,\dd x\leq C \left(\abs{x_i-y_i}+1\right).\label{fl2too}
			\end{align}
	\end{enumerate}

	Then for $z_i:=\frac{x_i+y_i}{2}$ we have that
	\begin{align}
		\norm*{f- \sum_{i=1}^{N}\alpha_i\delta_{z_i}}_{\N}\leq \epsilon
		\norm*{\sum_{i=1}^{N}\alpha_i\delta_{z_i}}_{\N}.\label{repbd}
	\end{align}
	Moreover, it follows that
	\begin{align}
		\norm*{f}_{\N} \sim \norm*{\sum_{i=1}^{N}\alpha_i\delta_{z_i}}_{\N} \sim \ell^\frac{1}{2}\max_{i=1,\ldots,N}\abs{\alpha_i}.\label{alphbd}
	\end{align}
\end{lemma}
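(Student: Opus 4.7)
The plan is to exploit the duality formulation of the weak norm together with the key observation that $f$ and $\sum_i \alpha_i \delta_{z_i}$ have the same ``cell integral'' on each interval $(m_i, m_{i+1})$, namely the value $\alpha_i$ (for $f$ this is by \eqref{normedwi}). This turns the approximation into a zero-mean problem on each cell, which can be attacked cell by cell with a pointwise-Lipschitz estimate on test functions, and the hypotheses (i) and (ii) together with $|y_i-x_i|\leq \ell/C_2$ will deliver the factor of $\epsilon$ once $C_2$ is chosen large.

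The starting point is to compute $\norm{\sum_i \alpha_i \delta_{z_i}}_\N$ more or less explicitly. The operator $(\ell^{-2}-\partial_{xx})^{-1}$ on $\torus_\Lambda$ has kernel equal to a periodization of the line Green's function $K(x,y)=\tfrac{\ell}{2}\exp(-|x-y|/\ell)$, so
\begin{align*}
  \norm*{\textstyle\sum_i \alpha_i \delta_{z_i}}_{\N}^{2}
  \;=\; \tfrac{\ell}{2}\sum_{i,j}\alpha_i \alpha_j \,e^{-|z_i-z_j|/\ell}+\text{(periodic tails)}.
\end{align*}
The diagonal terms contribute $\tfrac{\ell}{2}\sum_i \alpha_i^2 \sim \ell\,\alpha_\ast^2$, where $\alpha_\ast:=\max_i|\alpha_i|$. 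For the off-diagonal terms I would use that $z_i \in (x_i,y_i)\subset(m_i,m_{i+1})$ together with $m_{i+1}-m_i\geq \ell$ and $|y_i-x_i|\leq \ell/C_2$ to show $|z_{i+1}-z_i|\geq \ell(1-O(1/C_2))$; combined with the $N\leq N_1$ bound and the periodic tails, choosing $C_2$ large makes the off-diagonal contribution a fraction of the diagonal, yielding the second equivalence in \eqref{alphbd}.

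For the approximation error I use duality,
\begin{align*}
  \norm{g}_\N = \sup\Big\{ \textstyle\int g\phi\,\dd x \,\Big|\, \int \phi^2/\ell^2+\phi_x^2\leq 1 \Big\},\qquad g:=f-\sum_i\alpha_i\delta_{z_i}.
\end{align*}
Because $\int_{m_i}^{m_{i+1}} g \,\dd x = \alpha_i - \alpha_i = 0$, I can replace $\phi$ by $\phi-\phi(z_i)$ on the $i$-th cell. The pointwise bound $|\phi(x)-\phi(z_i)|\leq |x-z_i|^{1/2}\norm{\phi_x}_{L^2(m_i,m_{i+1})}$ then gives
\begin{align*}
  \Big|\int_{m_i}^{m_{i+1}} g\phi \,\dd x\Big|
  \;\leq\; |\alpha_i|\norm{\phi_x}_{L^2(m_i,m_{i+1})} \int_{m_i}^{m_{i+1}} |w_i(x)|\,|x-z_i|^{1/2}\,\dd x.
\end{align*}
The remaining integral I split into the ``core'' $(x_i,y_i)$ and the exponential tail: on the core, Cauchy--Schwarz together with \eqref{fl2too} and $|x-z_i|\leq |y_i-x_i|/2$ gives a contribution of order $|y_i-x_i|\,(\int w_i^2)^{1/2}\lesssim (|y_i-x_i|+1)^{3/2}/|\alpha_i|$, while the tail integrates to $O((|y_i-x_i|+1)^{1/2})$ thanks to \eqref{expdecayf}. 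Multiplying by $|\alpha_i|$, summing over $i\leq N$, and Cauchy--Schwarz over cells yields
\begin{align*}
  \norm{g}_\N \;\lesssim\; N^{1/2}\bigl(|y_i-x_i|+1\bigr)^{3/2}\;\lesssim\; (\ell/C_2+1)^{3/2}.
\end{align*}
Comparing with $\norm{\sum_i\alpha_i \delta_{z_i}}_\N \sim \ell^{1/2}\alpha_\ast$ and using $|y_i-x_i|\leq C_1\alpha_\ast$ from \eqref{alphsize2} to bound $\alpha_\ast$ from below by $|y_i-x_i|/C_1$ when it is small, the ratio is controlled by a negative power of $C_2$ plus a negative power of $\ell$, so both $C_2$ and $\ell_1$ chosen sufficiently large (depending on $N_1,C_1,\epsilon$) yield \eqref{repbd}. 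The norm equivalence \eqref{alphbd} then follows from the triangle inequality applied to $f = \sum \alpha_i\delta_{z_i}+g$.

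The main obstacle I expect is the bookkeeping in the last estimate: the hypotheses (i) and (ii) couple $\alpha_i$ and $w_i$ in a way that does not give a clean $L^1$ or $L^\infty$ bound on $w_i$ by itself, so one must combine the $L^2$ bound \eqref{fl2too} with the exponential-tail bound \eqref{expdecayf} and the constraint $|y_i-x_i|\leq C\alpha_\ast$ carefully (in particular distinguishing the regimes $\alpha_\ast \lesssim 1$ and $\alpha_\ast \gg 1$) to ensure the approximation error is genuinely a small \emph{relative} fraction of $\ell^{1/2}\alpha_\ast$ and not merely a small absolute quantity. A secondary technical point is handling the periodic wrap-around in the Green's function so that the off-diagonal exponential estimates on the torus persist uniformly in $\Lambda$, but this is routine once $m_{i+1}-m_i\geq \ell$ is in hand.
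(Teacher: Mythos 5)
Your duality setup for \eqref{repbd}---using that $\int_{m_i}^{m_{i+1}} g\,\dd x=0$ on each cell (since $\int w_i=1$) to replace the test function by $\phi-\phi(z_i)$, then applying the H\"older estimate $|\phi(x)-\phi(z_i)|\leq |x-z_i|^{1/2}\norm{\phi_x}_{L^2(m_i,m_{i+1})}$ and splitting the integral into core $(x_i,y_i)$ and exponential tail---is exactly the paper's argument. The equivalence $\norm{\sum_i\alpha_i\delta_{z_i}}_\N\sim\ell^{1/2}\alpha_*$ you derive from the explicit Green's kernel; the paper instead picks, for each $i$, a test function supported in a single cell $(m_i,m_{i+1})$ with $\xi(z_i)=\sgn(\alpha_i)$ and $|\xi_x|\lesssim\ell^{-1}$ (duality lower bound), and an $L^\infty$ bound $\norm{\xi}_{L^\infty}\lesssim\ell^{1/2}$ for the upper bound. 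Your kernel route is a sound alternative for the upper bound, but for the lower bound it depends on decay of the off-diagonal terms $\alpha_i\alpha_j e^{-|z_i-z_j|/\ell}$, i.e.\ on separation of the $z_i$; the single-cell test function avoids any cross-cell bookkeeping.

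There is, however, a genuine gap in your final comparison for \eqref{repbd}. After multiplying through by $|\alpha_i|$ you record the core contribution as $(|y_i-x_i|+1)^{3/2}$ and conclude $\norm{g}_\N\lesssim(\ell/C_2+1)^{3/2}$, and then propose to compare this \emph{absolute} bound against $\norm{\sum\alpha_i\delta_{z_i}}_\N\sim\ell^{1/2}\alpha_*$ using $\alpha_*\geq|y_i-x_i|/C_1$. This does not close: $|y_i-x_i|$ has no lower bound, so when $\alpha_*$ is small (say $\alpha_*\sim\ell^{-1}$, hence $|y_i-x_i|\lesssim\ell^{-1}$), the ratio $(\ell/C_2+1)^{3/2}/(\ell^{1/2}\alpha_*)\gtrsim\ell^{1/2}$ blows up instead of being small. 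The point you must not lose is the \emph{linear} factor of $|y_i-x_i|$ in the Cauchy--Schwarz estimate of the core: the correct bound is $|\alpha_i|\int_{(x_i,y_i)}|w_i||x-z_i|^{1/2}\,\dd x\lesssim |y_i-x_i|(|y_i-x_i|+1)^{1/2}$, where only the $+1$ factor should be absorbed via $|y_i-x_i|\leq\ell/C_2$, while the isolated $|y_i-x_i|$ must be converted to $\lesssim C_1\alpha_*$ via \eqref{alphsize2}. That yields $\lesssim C_1\alpha_*\,(\ell/C_2+1)^{1/2}$, and now the ratio against $\ell^{1/2}\alpha_*$ is $\lesssim C_1(C_2^{-1/2}+\ell^{-1/2})$, genuinely small for $C_2,\ell_1$ large. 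In other words, the bound must come out as $\epsilon_2^{1/2}\ell^{1/2}\alpha_*$ \emph{per cell} (as in the paper), not as an $\alpha_*$-free quantity to be compared afterwards. The tail contribution $|\alpha_i|(|y_i-x_i|+1)^{1/2}$, which you drop in the final display, is actually fine and handled the same way, so the missing ingredient is precisely the retention of the multiplicative structure $|y_i-x_i|\leq C\alpha_*$ in the core term.
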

With the approximation lemma in hand, it is straightforward to prove lemma \ref{l:vdist}.
\begin{proof}[Proof of lemma \ref{l:vdist}]
	Inequality \eqref{v.2} follows trivially from the Fourier representation and
	\begin{align*}
		\left(\ell^{-2}+\frac{\abs{k}^2}{\Lambda^{2}}\right)^{-1}\leq \min\left\{\ell^2,\,\frac{\Lambda^{2}}{\abs{k}^2}\right\}.
	\end{align*}
	To show \eqref{v.1}, we will use lemma \ref{l:appx} with $f:=v-\tilde{v}$,
 the points $x_i$, $y_i$ the zeros of $v$ and $\tilde{v}$, concatenated such that they are ordered as in \eqref{pts}, and
 \begin{align*}
  \alpha_i:=\int_{m_i}^{m_{i+1}}f\,\dd x.
\end{align*}
%, and the inequality in \eqref{alphsize} can be easily checked.
As for the conditions of lemma \ref{l:appx}, we note that \eqref{xy} holds because of the assumptions on $c$ and $\tilde c$.
The estimate \eqref{alphsize2}
 can be shown via an adaptation of the proof of \cite[Equation (3.2)]{OR} and the remainder of property (i) is straightforward.
Finally, \eqref{fl2too} is (3.4) in \cite{OR}.
	Hence, the function $f$ satisfies the bound \eqref{alphbd},
	which together with \eqref{alphsize2} implies \eqref{v.1}.
\end{proof}
It remains only to prove the approximation lemma.
\begin{proof}[Proof of lemma \ref{l:appx}]
%	Notice that, according to the normalization in \eqref{wprop}, there holds
%	\begin{align}
%		\alpha_i=\int_{m_i}^{m_{i+1}} f(x)\,\dd x.\label{stalpha}
%	\end{align}
%	Also notice for reference below that \eqref{xy},\eqref{alphsize}, and \eqref{wprop} combine to give
%	\begin{align}
%		\int f^2(x)\,\dd x\lesssim 1.\label{fnorm}
%	\end{align}
	Notice that \eqref{alphsize2} and \eqref{normedwi} imply
	\begin{align}\label{alphai}
		\alpha_i=\int_{m_i}^{m_{i+1}}f\,\dd x.
	\end{align}
	We will use duality in the form
	\begin{align*}
		\norm{g}_{\N}=\sup_{\xi\in\N'}\frac{\int g\xi\,\dd x}{\left(\int \ell^{-2}\xi^2+\xi_x^2\,\dd x\right)^{\frac{1}{2}}} =\sup_{\norm{\xi}_{\N'}\leq 1}\int g\xi\,\dd x,
	\end{align*}
	where $\N'$ consists of periodic functions $\xi$ such that $(\ell^{-2}-\partial_{xx})^{\frac{1}{2}}\xi\in L^2$ and
	\begin{align*}
		\norm{\xi}_{\N'}^{2}=\int\xi(\ell^{-2}-\partial_{xx})\xi\,\dd x=\ell^{-2}\int\xi^2\,\dd x+\int \xi_x^2\,\dd x.
	\end{align*}
	We begin by using the first dual representation to derive a bound on $\alpha_i$. We choose for any $i=1,\ldots,N$ a test function $\xi$ such that $\xi(z_i)=\sgn(\alpha_i)$, $\abs{\xi}\leq 1$, $\abs{\xi_x}\lesssim  \ell^{-1}$,
	and $\xi\equiv 0$ on $(m_i,m_{i+1})^c$. It follows that
	\begin{align*}
		\int \ell^{-2}\xi^2+\xi_x^2\,\dd x\lesssim \frac{1}{\ell},
	\end{align*}
	so that
	\begin{align*}
		\norm*{\sum_{k=1}^{N}\alpha_k\delta_{z_k}}_{\N}\geq
		\frac{\sum_{k=1}^{N}\alpha_k\xi(z_k)}{\left(\int \ell^{-2}\xi^2+\xi_x^2\,\dd x\right)^{\frac{1}{2}}}
		\gtrsim \ell^\frac{1}{2} \abs{\alpha_i},
	\end{align*}
	which, since $i$ was arbitrary, implies
 \begin{align}
     \max_{i=1,\ldots,N}\abs{\alpha_i}\lesssim\frac{1}{\ell^\frac{1}{2}} \norm*{\sum_{i=1}^{N}\alpha_i\delta_{z_i}}_{\N}.\label{alphbdold}
 \end{align}

	We turn now to establishing \eqref{repbd}.
	Notice that for any $\xi\in\N'$ with $\norm{\xi}_{\N'}\leq 1$, one has the H\"older estimate
	\begin{align}
		\abs{\xi(y)-\xi(x)}^2=\left(\int_x^y \xi_x\,\dd s\right)^2\leq \abs{y-x}\int \xi_x^2\,\dd x\leq \abs{y-x}.\label{xihold}
	\end{align}
	We use the second dual representation and estimate for any $\xi$ with $\norm{\xi}_{\N'}\leq 1$ the pairing
	\begin{align}
		\int \left( f- \sum_{i=1}^{N}\alpha_i\delta_{z_i}\right)\xi\,\dd x\overset{\eqref{alphai}}=\sum_{i=1}^N \int_{m_i}^{m_{i+1}}f(x)\left(\xi(x)-\xi(z_i)\right)\,\dd x.\label{xisum}
	\end{align}
	Each term in the sum can be estimated via
	\begin{eqnarray}
		\lefteqn{\int_{m_i}^{m_{i+1}}f(x)\left(\xi(x)-\xi(z_i)\right)\,\dd x}\notag\\
		&\leq& \int_{(m_i,m_{i+1})\setminus(x_i,y_i)}f(x)\left(\xi(x)-\xi(z_i)\right)\,\dd x+ \int_{(x_i,y_i)}f(x)\left(\xi(x)-\xi(z_i)\right)\,\dd x\notag\\
		&\overset{\eqref{xihold}}\leq &  \int_{(m_i,m_{i+1})\setminus(x_i,y_i)}\abs{f(x)}\sqrt{\abs{x-z_i}}\,\dd x +
		\left(\int_{(x_i,y_i)}f^2(x)\,\dd x\int_{(x_i,y_i)}\abs{x-z_i}\,\dd x\right)^{\frac{1}{2}}\notag\\
		&\leq &\left(\epsilon_{2}\ell\right)^{\frac{1}{2}}\max_{i=1,\ldots,N}\abs{\alpha_i},
		\label{termest}
	\end{eqnarray}
for any $\epsilon_2>0$ by using \eqref{xy}--\eqref{fl2too} and choosing $\ell_{1}$ sufficiently large with respect to $C_1$ and $\epsilon_{2}^{-1}$. Substituting \eqref{termest} for each $i$ into \eqref{xisum} gives
	\begin{eqnarray*}
		\sup_{\norm{\xi}_{\N'}\leq 1}  \int \left( f- \sum_{i=1}^{N}\alpha_i\delta_{z_i}\right)\xi\,\dd x&\leq&  \epsilon_{2}^{\frac{1}{2}}\ell^\frac{1}{2}\,N_{1} \,\max_{i=1,\ldots,N}\abs{\alpha_i}.
% \ell^\frac{1}{2}\sum_{i=1}^N |\alpha_i|\leq
	\end{eqnarray*}
	Inserting \eqref{alphbdold} and choosing $\epsilon_{2}$ sufficiently small completes the proof of \eqref{repbd}.

%%%

On the one hand, the triangle inequality
\begin{align*}
		\norm*{\sum_{i=1}^{N}\alpha_i\delta_{z_i}}_{\N}&\leq
		\norm*{f- \sum_{i=1}^{N}\alpha_i\delta_{z_i}}_{\N}+ \norm{f}_{\N}
	\end{align*}
and \eqref{repbd} with $\epsilon=\frac{1}{2}$ implies
	\begin{align}
\norm{f}_{\N}\gtrsim	 \norm*{\sum_{i=1}^{N}\alpha_i\delta_{z_i}}_{\N}\overset{\eqref{alphbdold}}\gtrsim	 \ell^\frac{1}{2}     \max_{i=1,\ldots,N}\abs{\alpha_i},\label{this}
	\end{align}
which establishes one direction in \eqref{alphbd}. On the other hand, the triangle inequality
\begin{align*}
		\norm{f}_{\N}\leq \norm*{\sum_{i=1}^{N}\alpha_i\delta_{z_i}}_{\N}+
		\norm*{f- \sum_{i=1}^{N}\alpha_i\delta_{z_i}}_{\N}
	\end{align*}
and \eqref{repbd} with $\epsilon=\frac{1}{2}$ yield
\begin{align}\label{estimatef}
  \norm{f}_{\N}\lesssim \norm*{\sum_{i=1}^{N}\alpha_i\delta_{z_i}}_{\N},
\end{align}
so that it suffices for the upper bound in \eqref{alphbd} to control the right-hand side of \eqref{estimatef} in terms of $\ell^\frac{1}{2}\max_i\abs{\alpha_i}$. For this, we turn again to the second dual representation
and estimate for any $\xi$ with $\norm{\xi}_{\N'}\leq 1$ that
\begin{align*}
  \int \xi \sum_{i=1}^{N}\alpha_i\delta_{z_i}\,\dd x=\sum_{i=1}^{N}\alpha_i\xi(z_i)\lesssim \ell^\frac{1}{2}\sum_{i=1}^{N}\abs{\alpha_i},
\end{align*}
where, according to the definition of $\norm{\cdot}_{\N'}$, we have estimated
\begin{align*}
	\MoveEqLeft
	\norm{\xi}_{L^\infty}\overset{\eqref{elementa}}\lesssim \left(\norm{\xi}_{L^2}\norm{\xi_x}_{L^2}\right)^{\frac{1}{2}}+\inf\abs{\xi}
	\leq\ell^\frac{1}{2}+\Lambda^{-1}\norm{\xi}_{L^{1}}
	\leq \ell^{\frac{1}{2}}+\Lambda^{-\frac{1}{2}}\norm{\xi}_{L^{2}}\lesssim \ell^{\frac{1}{2}}.
\end{align*}
\end{proof}

\subsection{Proofs of main lemmas}\label{ss:mainproofs}
We defer the proofs of the energy gap and energy dissipation estimates, which are lengthy, to subsections \ref{ss:app_en} und \ref{ss:app_diss}, below. In this subsection, we establish the remaining algebraic and differential estimates for the relaxation framework and the main theorem for the metastability framework.
\begin{proof}[Proof of lemma \ref{L1b}]
	First, we recall that according to lemma \ref{l:uv} we have
	\begin{align}\label{consequenceslemma1}
		\abs{c-\bar{c}}\lesssim \bar{H}^\frac{1}{3}+\bar{H}^\frac{1}{5}\qquad\text{and}\qquad
		N\lesssim E.
	\end{align}
According to \eqref{consequenceslemma1}, we may choose $\ell_1$ so large that
	\begin{align}
	\abs{c-\bar{c}}+1\ll\ell_1\leq \ell.\label{ccontrol}
	\end{align}
This control on the differences of the zeros also gives us control on the distances between zeros in the sense that
\begin{align}
  \ell(v)\geq \frac{\ell}{2}\qquad\text{and}\qquad \abs*{\min\{x_{i+1},\bar{x}_{i+1}\}-\max\{x_i,\bar{x}_i\}}\geq\frac{\ell}{2},\label{lvx}
\end{align}
where we have used $\ell(v)$ to denote the minimal distance between zeros of $v$.

%They are similar to (but not contained in) energy-energy-dissipation estimates from \cite{OR}.
We turn to the proof of \eqref{L1b.3}. Throughout the proof, we allow $\lesssim$ to include dependence on $C_H$ and $C_E$. It suffices (cf.\ notation~\ref{not:cdist}) to show that $\abs{x_{i}-\bar{x}_{i}}$ for an arbitrary $i$ is bounded by the right-hand side of \eqref{L1b.3}. Moreover we may assume without loss of generality that
$$
0=\bar{x}_i\leq x_i,\qquad\text{so that it suffices to estimate $x_i^{2}$.}
$$	
For a lengthscale $L\leq \frac{\ell}{5}$ to be specified below, we choose a positive cutoff function $\eta_L$ such that
	\begin{align*}
		\eta_L\equiv 1\text{ on }(-L,x_{i}+L),\quad \eta_L\equiv 0\text{ on }\R\setminus (-2L,x_{i}+2L),\quad\text{and}\quad
		\abs{\eta_{Lx}}\lesssim L^{-1},
	\end{align*}
and we observe (again recalling \eqref{consequenceslemma1}), that without loss
\begin{align*}
  \tilde{L}:=2L+x_i\leq \frac{\ell}{2}.
\end{align*}
It is convenient to use comparison to the infinite line minimizers.
\begin{notation}
  We denote by $v_\infty$ the function satisfying
  \begin{align*}
    -v_{\infty xx}+G'(v_{\infty})=0\quad\text{on }\R, \quad\lim_{x\to\pm \infty}v_{\infty}(x)=\pm 1,\quad v_{\infty}(0)=0,
  \end{align*}
  which is an energy minimizer subject to the $\pm 1$ boundary conditions at infinity. For the canonical potential, $v_\infty(x)=\tanh\left(\frac{x}{\sqrt{2}}\right).$
\end{notation}
We use the fact that
\begin{align*}
2x_i=  \int_\R v_\infty(x)-v_\infty(x-x_i)\,\dd x&\leq 2 \int_{-L}^{x_i+L}v_\infty(x)-v_\infty(x-x_i)\,\dd x,\\
  \int_{-L}^{x_i+L}\abs{v(x)-v_\infty(x-x_i)}\,\dd x&\leq \exp\left(-C\ell\right),\\
  \int_{-L}^{x_i+L}\abs{\bar{v}(x)-v_\infty(x)}\,\dd x&\leq \exp\left(-C\ell\right),
\end{align*}
for
\begin{align}
L\gg 1.\label{Lbig}
\end{align}
It follows that
\begin{align}
\abs{x_i}&\leq  \int \eta_L \abs{v-\bar{v}}\,\dd x+\exp\left(-C\ell\right)\notag\\
&\leq \int \eta_L \abs{v-u}\,\dd x+\int \eta_L \abs{\bar{v}-u}\,\dd x+\exp\left(-C\ell\right),\;\text{for }L\gg 1.\label{z.1}
\end{align}
For the first term on the right-hand side, we use \eqref{L1b.1} to estimate
\begin{align}
   \int \eta_L \abs{v-u}\,\dd x\lesssim
   %_{\depEH}
   \left(\tilde{L}\E\right)^\frac{1}{2}= \bigl((L+\abs{x_i})\E\bigr)^\frac{1}{2}.\label{z.2}
\end{align}
For the second term on the right-hand side, we estimate
\begin{align}
   \int \eta_L \abs{\bar{v}-u}\,\dd x\lesssim \left(\frac{\bar{H}}{\tilde{L}}\right)^\frac{1}{2}=\left(\frac{\bar{H}}{L+\abs{x_i}}\right)^\frac{1}{2}.\label{z.3}
\end{align}
The combination of \eqref{z.1} to \eqref{z.3} gives
\begin{align}
\abs{x_i}\lesssim \left(\frac{\bar{H}}{L+\abs{x_i}}\right)^\frac{1}{2}+ \bigl((L+\abs{x_i})\E\bigr)^\frac{1}{2} +\exp\left(-C\ell\right) \quad\text{for $L\gg 1$.} \label{z.4}
\end{align}
We now consider three cases:
\begin{align*}
  \text{(i)}\, 1\ll\left(\frac{\bar{H}}{\E}\right)^\frac{1}{2}\ll \ell,\quad \text{(ii)}\, \left(\frac{\bar{H}}{\E}\right)^\frac{1}{2}\gtrsim\ell,\quad
  \text{(iii)}\, \left(\frac{\bar{H}}{\E}\right)^\frac{1}{2}\lesssim 1.
\end{align*}
In the first case, we choose $L\sim \left(\frac{\bar{H}}{\E}\right)^\frac{1}{2}$, so that \eqref{z.4} gives
\begin{align*}
  \abs{x_i}^2\lesssim \left(\bar{H}\E\right)^\frac{1}{2}+|x_i|\E+\exp(-C\ell).
\end{align*}
On the other hand, in case (ii),
we
deduce from \eqref{z.1}-\eqref{z.3} for $L\sim \ell$ (and hence $(L+\abs{x_i})\E\lesssim \ell\E\lesssim \frac{\bar{H}}{\ell}$) that
\begin{align*}
  \abs{x_i}^2\lesssim \frac{\bar{H}}{\ell}+\exp\left(-C\ell\right).
\end{align*}
Finally, in case (iii), we use
\begin{align*}
  \frac{\bar{H}}{L}\lesssim \E\qquad\text{for $L\gtrsim 1$}.
\end{align*}
Choosing $L$ large enough so that \eqref{z.4} holds leads to
\begin{align*}
  \abs{x_i}^2\lesssim (1+\abs{x_i})\E+\exp(-C\ell).
\end{align*}

	We turn to the proof of \eqref{L1b.4}. We claim that it suffices to show
	\begin{equation}\label{l.9}
		 \E\lesssim
%_{\depEH}
(\bar{H}D)^\frac{1}{2}+\left((\abs{c-\bar{c}}+1)\left(c-\bar{c}\right)^2D\right)^\frac{1}{2}+D+\exp(-C\ell).
	\end{equation}
	Indeed, substituting \eqref{L1b.3} into \eqref{l.9} gives
	\begin{align*}
		\MoveEqLeft
		\E\lesssim
%_{\depEH}
(\bar{H}D)^\frac{1}{2}+\bigl((\abs{c-\bar{c}}+1)^{2}\E D\bigr)^\frac{1}{2}+\left(\frac{(|c-\bar{c}|+1) \bar{H}D}{\ell}\right)^{\frac{1}{2}}\\
		&\quad+\bigl((|c-\bar{c}|+1)\exp(-C\ell) D\bigr)^\frac{1}{2}+D+\exp(-C\ell)\\
		 \overset{\eqref{ccontrol}}\lesssim&(\bar{H}D)^\frac{1}{2}+\E^\frac{1}{2}\bigl((|c-\bar{c}|+1)^2D\bigr)^\frac{1}{2}+D+\exp(-C\ell),
	\end{align*}
	so that \eqref{L1b.4} follows from Young's inequality.
	As far as \eqref{l.9}, we note that, according to \eqref{L1b.1}, \eqref{L1b.2}, and Young's inequality, it is enough to show
	\begin{align}\label{estimateL2f}
		\begin{split}
			\MoveEqLeft
			\int {\ff}^{2}\,\dd x
			\lesssim\left(\int \F^{2}\,\dd x\int {\ff}_{x}^{2}\,\dd x \right)^{\frac{1}{2}}\\
			&\qquad \qquad+\left(\bigl((\abs{c-\bar{c}}+1)(c-\bar{c})^{2}+\exp(-C\ell)\bigr)\int {\ff}_{x}^{2}\,\dd x\right)^{\frac{1}{2}},
		\end{split}
	\end{align}
where $\F$ is defined via $\F_x=\f$, $\norm{\f}_{\dot{H}^{-1}}=\norm{\F}_{L^2}$.
%where we acquire an energetic dependence in estimating
%\begin{align*}
% \exp(-C\ell)\int {\ff}_{x}^{2}\,\dd x \lesssim  \exp(-C\ell)\E(u)\lesssim_{C_E}\exp(-C\ell).
%\end{align*}
	To establish \eqref{estimateL2f}, we start by estimating
	\begin{align*}
		\MoveEqLeft
		\int {\ff}^{2}\,\dd x=\int \f {\ff}\,\dd x+\int (\bar{v}-v){\ff}\,\dd x\\
		&= -\int \F{\ff}_{x}\,\dd x+\int (\bar{v}-v){\ff}\,\dd x\\
		&\leq \left(\int \F^{2}\,\dd x\int {\ff}_{x}^{2}\,\dd x\right)^{\frac{1}{2}}+\int \abs{\bar{v}-v}\abs{{\ff}}\,\dd x.
	\end{align*}
	Hence, it is sufficient for \eqref{estimateL2f} to verify
	\begin{align}\label{ineqvbarvf}
		\begin{split}
			\MoveEqLeft
			\int \abs{\bar{v}-v}\abs{{\ff}}\,\dd x\\
			&\lesssim \left(\Bigl((\abs{\bar{c}-c}+1)(\bar{c}-c)^{2}+\exp(-C\ell)\Bigr)\int {\ff}_{x}^{2}\,\dd x\right)^{\frac{1}{2}}.
		\end{split}
	\end{align}
	It is enough to show this inequality on each of the intervals $(y_{i-1},y_{i})$, where $y_{i}=\frac{x_{i}+x_{i+1}}{2}$. We argue locally on subintervals and  deduce \eqref{ineqvbarvf} by summing over these local estimates, so that the constant in \eqref{ineqvbarvf} acquires a dependence on $N$ and hence, estimating $N$ via lemma \ref{l:uv}, a dependence on $C_E$.
 For the estimate on a subinterval, we apply the Cauchy-Schwarz inequality and Hardy's inequality to estimate
	\begin{align*}
		\MoveEqLeft
		\int_{y_{i-1}}^{y_{i}}\abs{\bar{v}-v}\abs{{\ff}}\,\dd x\\
		&\leq \left(\int_{y_{i-1}}^{y_{i}}\left((x-x_{i})^{2}+1\right)(\bar{v}-v)^{2}\,\dd x \int_{y_{i-1}}^{y_{i}}\frac{{\ff}^{2}}{(x-x_{i})^{2}+1}\,\dd x\right)^{\frac{1}{2}}\\
		&\lesssim \left(\int_{y_{i-1}}^{y_{i}}\left((x-x_{i})^{2}+1\right)(\bar{v}-v)^{2}\,\dd x \int_{y_{i-1}}^{y_{i}}{\ff}_{x}^{2}\,\dd x\right)^{\frac{1}{2}},
	\end{align*}
	so that the proof is finished if we can show
	\begin{align}\label{intermed_important_ineq}
		\int_{y_{i-1}}^{y_{i}}\left((x-x_{i})^{2}+1\right)(\bar{v}-v)^{2}\,\dd x\lesssim (\abs{\bar{c}-c}+1)(\bar{c}-c)^{2}+\exp(-C\ell).
	\end{align}
	To establish this fact, we use
	\begin{align*}
		\int_{\R}\left((x-x_{i})^{2}+1\right)\bigl(v_{\infty}(x-\bar{x}_{i})-v_{\infty}(x-\bar x_{i})\bigr)^{2}\,\dd x
		\lesssim (\abs{\bar{c}-c}+1)(\bar{c}-c)^{2},
	\end{align*}
which was established in \cite[(2.12)]{OW}, together with the triangle inequality and
	\begin{align*}
		 \int_{y_{i-1}}^{y_{i}}\left((x-x_{i})^{2}+1\right)\bigl(v_{\infty}(x-x_{i})-v\bigr)^{2}\,\dd x &\lesssim \exp(-C\ell)\\
		 \int_{y_{i-1}}^{y_{i}}\left((x-x_{i})^{2}+1\right)\bigl(\bar{v}-v_{\infty}(x-\bar{x}_{i})\bigr)^{2}\,\dd x&\lesssim \exp(-C\ell),
	\end{align*}
which is not hard to show.

	Finally \eqref{L1b.5} is a consequence of the Lipschitz condition from lemma \ref{l:lip} and \eqref{lvx}.
\end{proof}

\begin{proof}[Proof of lemma \ref{L2b}]
	Equation \eqref{L2b.1} is well-known and easy to check.
	Throughout the proof, we allow $\lesssim$ to include dependence on $C_H$ and $C_E$.
	For \eqref{L2b.2}, we choose $\ell_1$ sufficiently large so that the zeros and distances between the zeros are controlled as in the proof of lemma \ref{L1b} (cf.\ \eqref{ccontrol}, \eqref{lvx}). We follow the proof of \cite[equation (1.23)]{OW} and note that in our setting
	\begin{align}\label{ft}
		\begin{split}
			\MoveEqLeft
			\f_{t}=u_{t}=\left(-u_{xx}+G'(u)\right)_{xx}
			=\left(-u_{xx}+G'(v+{\ff})\right)_{xx}\\
			 &=\left(-u_{xx}+G'(v+{\ff})+v_{xx}-G'(v)-\sum_{i=1}^{N}\alpha_{i}\delta_{x_{i}}\right)_{xx}\\
			 &=\left(G'(v+{\ff})-G'(v)-{\ff}_{xx}-\sum_{i=1}^{N}\alpha_{i}\delta_{x_{i}}\right)_{xx}
		\end{split}
	\end{align}
	in the sense of distributions, where
	\begin{align*}
		\alpha_{i}=v_x(x_{i}^{+})-v_x(x_{i}^{-}).
	\end{align*}
	From the properties of the energy optimal profiles, we read off
	\begin{align}\label{boundonalpha}
		\abs{\alpha_{i}}\lesssim \exp(-C\ell).
	\end{align}
	On the level of $\F$, equation \eqref{ft} can be written as
	\begin{align}\label{Ft}
		 \F_{t}=\left(G'(v+{\ff})-G'(v)-{\ff}_{xx}-\sum_{i=1}^{N}\alpha_{i}\delta_{x_{i}}\right)_{x}.
	\end{align}
We hence obtain for the derivative of $\bar{H}$ that
	\begin{eqnarray}\label{Hdot}
			\frac{1}{2}\frac{\dd}{\dd t} \bar{H}&=&
			\int \F\F_{t}\,\dd x\notag\\
			&\overset{\eqref{Ft}}=&\int \F\left(G'(v+{\ff})-G'(v)-{\ff}_{xx}-\sum_{i=1}^{N}\alpha_{i}\delta_{x_{i}}\right)_{x}\,\dd x\notag\\
			&=&-\int \f\left(G'(v+{\ff})-G'(v)-{\ff}_{xx}-\sum_{i=1}^{N}\alpha_{i}\delta_{x_{i}}\right)\,\dd x\notag\\
			&=&-\int ({\ff}+v-\bar{v})\left(G'(v+{\ff})-G'(v)-{\ff}_{xx}\right)\,\dd x\notag\\
			&&\qquad-\int \f\sum_{i=1}^{N}\alpha_{i}\delta_{x_{i}}\,\dd x.
	\end{eqnarray}
For the second term on the right-hand side, we calculate
	\begin{align*}
		\MoveEqLeft
		\abs*{\int \f\sum_{i=1}^{N}\alpha_{i}\delta_{x_{i}}\,\dd x}
		=\sum_{i=1}^{N}\abs{\alpha_{i}}\abs*{u(x_{i})-\bar{v}(x_{i})}
		=\sum_{i=1}^{N}\abs*{\alpha_{i}\bar{v}(x_{i})}
		\overset{\eqref{boundonalpha}}\lesssim \exp(-C\ell),
	\end{align*}	
	where we have applied the bound on $N$ from lemma \ref{l:uv}.
	For the first term on the right-hand side of \eqref{Hdot}, we write
	\begin{align}
			\MoveEqLeft
			-\int ({\ff}+v-\bar{v})\left(G'(v+{\ff})-G'(v)-{\ff}_{xx}\right)\,\dd x\notag\\
			&=-\int {\ff}_{x}^{2}+G''(v){\ff}^{2}\,\dd x-\int \left(G'(v+{\ff})-G'(v)-G''(v){\ff}\right){\ff} \,\dd x\notag\\
			&\qquad\quad -\int (v-\bar{v})\bigl(G'(v+{\ff})-G'(v)\bigr)\,\dd x-\int (v-\bar{v})_{x}{\ff}_{x}\,\dd x\notag\\
&\leq -\int \left(G'(v+{\ff})-G'(v)-G''(v){\ff}\right){\ff} \,\dd x\notag\\
		&\qquad\quad +\int (\bar{v}-v)\bigl(G'(v+{\ff})-G'(v)\bigr)\,\dd x+\int (\bar{v}-v)_{x}{\ff}_{x}\,\dd x,\label{A}
	\end{align}
	where in the last line we have used that $u$ and $v$ have common zeros, so that the linearized energy gap is positive.
	To estimate the right-hand side, we will use the elementary inequality
	\begin{align}
		\sup \abs{{\ff}}\lesssim \left( \int {\ff}^{2}\,\dd x\int {\ff}_{x}^{2}\,\dd x\right)^{\frac{1}{4}},\label{elementa}
	\end{align}
	which together with \eqref{L1b.1} and \eqref{L1b.2} implies
	\begin{align}\label{supestimatefc}
		\sup\abs{{\ff}}\lesssim \E^{\frac{1}{2}} \lesssim 1\qquad\text{and}\qquad
		\sup\abs{{\ff}}\lesssim (\E D)^{\frac{1}{4}}.
	\end{align}
%	Below we allow $\lesssim$ to depend on $C_H$, $C_E$.
	For the first integral on the right-hand side of \eqref{A}, we use \eqref{L1b.1} together with \eqref{supestimatefc} to obtain
	\begin{align*}
		\MoveEqLeft
		-\int \left(G'(v+{\ff})-G'(v)-G''(v){\ff}\right){\ff} \,\dd x\\
		&\lesssim \int \abs{{\ff}}^{3}\,\dd x\leq \sup\abs{{\ff}}\int {\ff}^{2}\,\dd x
		\lesssim
%_{\depEH}
\E^{\frac{5}{4}}D^{\frac{1}{4}}\lesssim\E^{\frac{3}{4}}D^{\frac{1}{4}}.
	\end{align*}
	Using \eqref{L1b.1}, \eqref{L1b.2}, \eqref{ineqvbarvf}, and the first item in \eqref{supestimatefc}, the second integral on the right-hand side of \eqref{A} can be estimated
	\begin{align*}
		\MoveEqLeft
		\int (\bar{v}-v)\bigl(G'(v+{\ff})-G'(v)\bigr)\,\dd x
		\lesssim \int \abs{\bar{v}-v}\abs{{\ff}}\,\dd x\\
		&\lesssim_{\depEH} \left(\Bigl((\abs{\bar{c}-c}+1)\abs{\bar{c}-c}^{2}\Bigr)D\right)^{\frac{1}{2}}+\exp(-C\ell).
	\end{align*}
	The last integral on the right-hand side of \eqref{A} satisfies
	\begin{align*}
		\int (\bar{v}-v)_{x}{\ff}_{x}\,\dd x
		\leq \left(\int (\bar{v}_{x}-v_x)^{2}\,\dd x\int {\ff}_{x}^{2}\,\dd x\right)^{\frac{1}{2}}
	\end{align*}
	and, similarly to in \eqref{intermed_important_ineq}, we estimate
	\begin{align*}
		\left(\int (\bar{v}_{x}-v_x)^{2}\,\dd x\right)^{\frac{1}{2}}
		\lesssim \min\{1,\abs{c-\bar{c}}\}+\exp(-C\ell).
	\end{align*}
	Combining these two estimates with \eqref{L1b.1} and \eqref{L1b.2}, we deduce
	\begin{align*}
		\int (\bar{v}-v)_{x}{\ff}_{x}\,\dd x
		\lesssim
%_{\depEH}
\left((\abs{\bar{c}-c}+1)\abs{\bar{c}-c}^{2}D\right)^{\frac{1}{2}}+\exp(-C\ell).
	\end{align*}

\end{proof}

\subsection{Scaling of the energy gap}\label{ss:app_en}

In this section we prove \eqref{L1b.1} of lemma \ref{l:eedraw}. While in \cite{OR} we assumed for convenience that $u$ had a sign on large subintervals, we make no such assumption here. We collect the necessary ingredients, show how they combine to establish \eqref{L1b.1}, and then give the proofs of the lemmas.
We begin by recalling the $L^\infty$ bound of Modica and Mortola in $d=1$.
\begin{lemma}\label{l:linf}
  For all $u:\left(-\frac{\Lambda}{2},\frac{\Lambda}{2}\right]\to\R$ with at least one zero, there holds
  \begin{align*}
    \sup\abs{u}\lesssim 1+E(u).
  \end{align*}
\end{lemma}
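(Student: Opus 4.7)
The plan is to implement the standard Modica--Mortola one-dimensional pointwise device, with the zero of $u$ providing a base point from which to integrate.

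First, I would introduce the primitive
\begin{align*}
\Phi(u):=\int_0^{u}\sqrt{2G(s)}\,\dd s,
\end{align*}
which is well defined since $G\geq 0$, odd (because $G$ is even), strictly monotone on $[0,\infty)$, and satisfies $\Phi(0)=0$. Let $x_0\in\left(-\frac{\Lambda}{2},\frac{\Lambda}{2}\right]$ be a point at which $\abs{u}$ attains its maximum $M:=\sup\abs{u}$, and let $x_z$ denote a zero of $u$, which exists by hypothesis.

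Next, using the chain rule and the Young inequality $ab\leq \frac{1}{2}a^{2}+\frac{1}{2}b^{2}$, I would estimate
\begin{align*}
\left|\frac{\dd}{\dd x}\Phi\bigl(u(x)\bigr)\right|
=\sqrt{2G\bigl(u(x)\bigr)}\,\abs{u_{x}(x)}
\leq \tfrac{1}{2}u_{x}^{2}+G\bigl(u(x)\bigr).
\end{align*}
Integrating from $x_z$ to $x_0$ (taking the shorter arc on the periodic interval) and using $\Phi(u(x_z))=\Phi(0)=0$ gives
\begin{align*}
\Phi(M)=\abs*{\Phi\bigl(u(x_0)\bigr)}
\leq \int \tfrac{1}{2}u_{x}^{2}+G(u)\,\dd x
=E(u).
\end{align*}

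The remaining step is to invert this into a bound on $M$. Here I would use the double-well structure: since $G$ is continuous, strictly positive away from $\pm 1$, and the canonical model $G(u)=\frac{1}{4}(1-u^{2})^{2}$ (and analogous nondegenerate potentials) satisfies $G(u)\gtrsim 1$ for $\abs{u}\geq \frac{3}{2}$, one obtains the coercivity estimate
\begin{align*}
\Phi(M)\gtrsim M-C\qquad\text{for all }M\geq 0,
\end{align*}
for some universal $C$. Combining this with the previous display yields $M\lesssim 1+E(u)$, as claimed. The only delicate point is the coercivity of $\Phi$ at infinity, which is where one needs that $G$ stays bounded away from zero outside a neighbourhood of $\pm 1$; this is the one place the proof uses something slightly beyond what is explicitly listed in remark~\ref{rem:G}, but it holds for the canonical potential and indeed for any nondegenerate double-well the authors have in mind.
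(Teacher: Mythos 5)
The paper supplies no proof of lemma \ref{l:linf}; it merely recalls it as ``the $L^\infty$ bound of Modica and Mortola in $d=1$.'' Your argument is precisely the classical Modica--Mortola device the authors have in mind, and it is correct: define $\Phi(u)=\int_0^u\sqrt{2G(s)}\,\dd s$, bound $\bigl|\tfrac{\dd}{\dd x}\Phi(u)\bigr|=\sqrt{2G(u)}\,|u_x|\leq \tfrac12 u_x^2+G(u)$ by Young, and integrate from a zero of $u$ to a point where $|u|$ attains its supremum $M$ to get $\Phi(M)\leq E(u)$. Inverting $\Phi$ then gives $M\lesssim 1+E(u)$.

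Your caveat at the end is well spotted and worth retaining: the inversion step needs $\Phi$ to be coercive, i.e.\ $\liminf_{s\to\infty}G(s)>0$ (or at least $\sqrt{G}\notin L^1(\R)$), and this does not follow from the three bullet points in remark \ref{rem:G}, which only constrain $G$ on a neighborhood of $[-1,1]$ and say nothing about behavior at infinity. Without such a condition the lemma is actually false---one can build $u$ with one zero, a linear ramp up to an arbitrarily large plateau, and bounded energy when $G$ decays fast enough at infinity. The authors clearly intend $G$ to grow at infinity (it does for the canonical quartic), but your flagging of the implicit hypothesis is a genuine and correct observation, not a defect in your proof.
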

Because we will work locally, it is convenient to introduce the following notation.
\begin{notation}
  In the rest of this section, we assume unless otherwise noted that:
 \begin{itemize}
   \item $u$ is a function such that $u(0)=u(\ell)=0$,
   \item $v$ is the positive energy-optimal profile subject to $v(0)=v(\ell)=0$,
   \item and integrals, norms, and energy are over $(0,\ell)$.
 \end{itemize}
\end{notation}We recall the estimate of the linearized energy gap from \cite{OR}.
\begin{lemma}[Linearized energy gap estimate,~\cite{OR} proposition~3.1]\label{l:line}
	There exists $\ell_1,\,{C_{\mathrm{ed}}}\in (0,\infty)$ such that for all $\ell\geq \ell_1$, for all
	smooth functions $f$ with $f(0)=f(\ell)=0$, there holds
	\begin{align}
		\norm{f}_{H^1}^2\leq \frac{C_{\mathrm{ed}}}{4}\,\int
		f_x^2+G''(v)f^2\,\dd x.\label{lin.en}
	\end{align}
\end{lemma}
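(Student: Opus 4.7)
The goal is $L^{2}$-coercivity of the linearized operator $L := -\partial_{xx} + G''(v)$ on $H^{1}_{0}(0,\ell)$ with a constant independent of $\ell$; the full $H^{1}$ bound then follows from the uniform pointwise bound on $G''(v)$.

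The first move is a Hardy-type factorization. Since $v$ is the positive energy minimizer with $v(0)=v(\ell)=0$, its derivative $v_{x}$ is positive on $(0, \ell/2)$, vanishes at $\ell/2$, and solves the linearized equation $-(v_{x})_{xx} + G''(v) v_{x} = 0$. Moreover $v_{xx}(0) = G'(v(0)) = G'(0) = 0$ because $G$ is even. Writing $f = v_{x} g$ on $(0, \ell/2)$ (so $g(0) = 0$) and integrating by parts, the boundary contribution $[v_{xx} v_{x} g^{2}]_{0}^{\ell/2}$ vanishes (at $0$ because $v_{xx}(0) = 0$, at $\ell/2$ because $v_{x}(\ell/2) = 0$), yielding the identity
\begin{align*}
	\int_{0}^{\ell/2} \left( f_{x}^{2} + G''(v) f^{2} \right) \dd x = \int_{0}^{\ell/2} v_{x}^{2} g_{x}^{2} \, \dd x.
\end{align*}
A symmetric identity holds on $(\ell/2, \ell)$ with $-v_{x}$ in place of $v_{x}$, so the quadratic form is at least non-negative.

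To upgrade non-negativity to strict coercivity I would split $(0, \ell/2)$ into a fixed outer piece $(0, L)$, with $L$ large but independent of $\ell$, and a bulk piece $(L, \ell/2)$. On the bulk, exponential convergence of $v$ to $1$ yields $G''(v) \geq G''(1)/2 > 0$ for $\ell$ large, so direct $L^{2}$-coercivity follows from the one-dimensional Poincar\'e inequality together with $\norm{G''(v)}_{L^{\infty}} \lesssim 1$. On the outer piece, $v$ is exponentially close to the half-line profile $v_{\infty}$, and I would appeal to the spectral fact that the Dirichlet operator $-\partial_{xx} + G''(v_{\infty})$ on $(0, \infty)$ has strictly positive lowest eigenvalue $\mu_{1} > 0$: the formal zero mode $v_{\infty,x}$ violates the Dirichlet condition at $0$ since $v_{\infty,x}(0) > 0$, so by a Sturm oscillation argument every true Dirichlet eigenfunction has eigenvalue strictly above zero. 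A standard min-max comparison, combined with the bound $\norm{G''(v) - G''(v_{\infty})}_{L^{\infty}(0,L)} \lesssim \exp(-C\ell)$ from ODE phase-plane analysis, transfers this positivity to the finite-$\ell$ problem up to exponentially small perturbation. Gluing the two regions via a fixed partition of unity at $x = L$ yields $\int_{0}^{\ell/2} f_{x}^{2} + G''(v) f^{2}\, \dd x \gtrsim \norm{f}_{L^{2}(0,\ell/2)}^{2}$ uniformly in $\ell$, and symmetry handles $(\ell/2, \ell)$. Finally the elementary bound $\norm{f_{x}}_{L^{2}}^{2} \leq \int ( f_{x}^{2} + G''(v) f^{2}) \, \dd x + \norm{G''(v)}_{L^{\infty}} \norm{f}_{L^{2}}^{2}$ upgrades $L^{2}$-coercivity to $H^{1}$-coercivity, producing the constant $C_{\mathrm{ed}}$.

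The chief obstacle is the gluing step and the degeneracy of the Hardy weight $v_{x}^{2}$ near $\ell/2$: the factorization identity by itself only produces a weighted Poincar\'e inequality in which the weight vanishes at the right endpoint, so the complementary bulk region (where $G''(v)$ is directly positive) is indispensable for supplying the missing coercivity. The other delicate ingredient is the rigorous Sturm-type proof that $\mu_{1} > 0$ for the half-line Dirichlet problem, together with uniform control of the $\exp(-C\ell)$ perturbation entering the min-max comparison.
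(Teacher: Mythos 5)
The central step of your factorization argument is not correct: the boundary term at $x=\ell/2$ does not vanish. You note that $v_x(\ell/2)=0$, but the relevant quantity is $v_{xx}(\ell/2)\,v_x(\ell/2)\,g(\ell/2)^2 = v_{xx}(\ell/2)\,f(\ell/2)^2/v_x(\ell/2)$. Since $v_{xx}(\ell/2)=G'(v(\ell/2))<0$ (strictly, for finite $\ell$, because otherwise $v$ would be constant), $v_x$ has a \emph{simple} zero at $\ell/2$, so $f^2/v_x\to+\infty$ there whenever $f(\ell/2)\neq 0$; the boundary contribution is thus $-\infty$, not zero. Correspondingly your purported identity
\begin{align*}
\int_0^{\ell/2}\bigl(f_x^2+G''(v)f^2\bigr)\,\dd x=\int_0^{\ell/2}v_x^2 g_x^2\,\dd x
\end{align*}
equates a finite number with $+\infty$ for generic $f$. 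The factorization $f=v_x g$ is only harmless on a fixed interval $(0,L)$ with $L<\ell/2$, where $v_x$ is bounded below; but there the boundary term $v_{xx}(L)f(L)^2/v_x(L)$ is approximately $-\sqrt{G''(1)}\,f(L)^2$ for $L$ large, a negative contribution of order one that must be absorbed by the bulk estimate, not ignored. So the positivity you claim to extract from the factorization is not established by this computation (it is of course true, being the nonnegativity of the second variation at the minimizer, but that is a separate observation).

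Your second argument --- splitting into a fixed outer region, controlled by the strictly positive Dirichlet ground-state energy of $-\partial_{xx}+G''(v_\infty)$ on $(0,\infty)$ (strict because the translation mode $v_{\infty x}$ violates $f(0)=0$, e.g.\ by the odd-reflection or Sturm observation), and a bulk region where $G''(v)\geq G''(1)/2$ --- does not rely on the factorization identity and is a viable route; the delicate points you flag (the IMS localization error from the partition of unity, and the exponentially small potential comparison) are exactly the ones that need care, together with absorbing the boundary quantity $\sqrt{G''(1)}f(L)^2$ into the bulk. Note that the paper itself does not reprove this estimate but quotes Proposition~3.1 of \cite{OR}, so you are not being measured against an argument in this manuscript; you should nonetheless drop or repair the factorization step, since as written it asserts a false identity.
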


As in \cite{OR}, we seek to improve from the linear estimate to a nonlinear estimate, using that functions with small energy gap are close to an energy-optimal profile.
To establish this, we adapt the proof from \cite[lemma 3.6]{OR} to our setting.

%\begin{lemma}[Small energy gap implies uniform closeness]\label{uniformcloseness}
%For every $C_{1}\in(0,\infty)$, there exists $\ell_2\in(0,\infty)$ with the following property.	
%For every $\epsilon>0$, there exists $\gamma>0$ such that for all $\ell\geq \ell_{2}$ and $\norm{u-v}_{L^{2}}\leq C_{1}$, there holds
%	\begin{align*}
%		\energy(u)-\energy(v)\leq \gamma\qquad\Rightarrow\qquad
%		\sup_{[0,\ell]}\abs{u-v}\leq \epsilon.
%	\end{align*}
%\end{lemma}
\begin{lemma}[Small energy gap implies uniform closeness]\label{uniformcloseness}
	There exists $\ell_2\in(0,\infty)$ with the following property.	
	For every $\epsilon>0$, there exists $\gamma>0$ such that for all $\ell\geq \ell_{2}$, there holds
	\begin{align*}
		\energy(u)-\energy(v)\leq \gamma\qquad\Rightarrow\qquad
		\min_{\mp}\sup_{[0,\ell]}\abs{u\mp v}\leq \epsilon.
	\end{align*}
\end{lemma}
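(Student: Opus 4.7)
The plan is to argue by contradiction using a compactness argument, adapting the proof of \cite[Lemma 3.6]{OR} to the present setting where we impose no sign constraint on $u$. Suppose the conclusion fails for some $\epsilon>0$: there exist sequences $\ell_n\geq \ell_2$ and $u_n$ with $u_n(0)=u_n(\ell_n)=0$ such that $E(u_n)-E(v_{\ell_n})\to 0$ but $\min_\mp\sup_{[0,\ell_n]}\abs{u_n\mp v_{\ell_n}}>\epsilon$ for all $n$. Lemma \ref{l:linf} gives a uniform bound $\sup\abs{u_n}\lesssim 1$, and the energy bound gives $\norm{u_{n,x}}_{L^2}\lesssim 1$.

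First I would treat the bounded case: along a subsequence $\ell_n\to \ell_*\in[\ell_2,\infty)$. On a fixed neighborhood of $[0,\ell_*]$, the sequence $u_n$ is uniformly bounded in $H^1$, so by Sobolev compactness in dimension one, a further subsequence converges weakly in $H^1$ and uniformly to some $u_*$ with $u_*(0)=u_*(\ell_*)=0$. Lower semicontinuity of $E$, together with continuity of $\ell\mapsto E(v_\ell)$, forces $u_*$ to be an energy minimizer over $\{w\in H^1(0,\ell_*) : w(0)=w(\ell_*)=0\}$. Analyzing the Euler--Lagrange equation $-u_{xx}+G'(u)=0$ with these Dirichlet data (and using that for $\ell_*\geq \ell_2$ any sign-changing solution is strictly more energetic than $v_{\ell_*}$), one identifies the set of minimizers as exactly $\{\pm v_{\ell_*}\}$. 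Hence $u_n$ converges uniformly to $\pm v_{\ell_*}$; combined with the continuous dependence $v_{\ell_n}\to v_{\ell_*}$, this contradicts the assumed lower bound.

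Next I would treat the unbounded case $\ell_n\to\infty$. By a diagonal argument, using $H^1_{\mathrm{loc}}$-boundedness and Sobolev compactness on each fixed window $[0,L]$, pass to a subsequence such that $u_n\to u_\infty^L$ and $u_n(\ell_n-\cdot)\to u_\infty^R$ uniformly on compact subsets of $[0,\infty)$, with $u_\infty^L(0)=u_\infty^R(0)=0$. Because $E(v_{\ell_n})$ converges to $c_*:=2\int_0^1\sqrt{2G(s)}\,\dd s$, the cost of two half-heteroclinics $0\mapsto\pm 1$, the hypothesis $E(u_n)-E(v_{\ell_n})\to 0$, lower semicontinuity, and an equipartition of energy between the left and right halves of $[0,\ell_n]$ force $u_\infty^L$ and $u_\infty^R$ each to be energy-minimizing half-profiles on $[0,\infty)$ vanishing at $0$ and tending to $\pm 1$ at infinity; the only such minimizers are $\pm\tilde v_\infty$, where $\tilde v_\infty$ denotes the $0\mapsto 1$ half-heteroclinic. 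Moreover, the two signs must coincide: otherwise $u_n$ would need an additional $\pm 1\mapsto \mp 1$ transition in the bulk of $[0,\ell_n]$, contributing extra energy of order $c_*$ and violating the energy budget. Consequently there is a single sign $\mp$ such that $u_n$ converges uniformly on each end-window to the signed half-heteroclinic and $u_n\to \mp 1$ uniformly on the complement of a bounded neighborhood of the endpoints. Since $v_{\ell_n}$ enjoys the same uniform approximation by $\tilde v_\infty$ on each half (with exponentially small error, cf.\ the estimates underpinning lemma \ref{l:lip}), this yields $\sup\abs{u_n\mp v_{\ell_n}}\to 0$, contradicting the assumption.

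The main obstacle is the unbounded case: specifically, pinning down a single global sign choice from two a priori independent half-limits by an energy-splitting argument, and controlling the bulk portion of $[0,\ell_n]$ (where no boundary datum constrains the solution) via the uniform $L^\infty$ bound and the convergence of $u_\infty^{L,R}$ to $\pm 1$ at infinity. The remaining ingredients---one-dimensional Sobolev compactness, lower semicontinuity, the ODE analysis identifying $\{\pm v_\ell\}$ as the only minimizers, and the exponentially small comparison between $v_\ell$ and the half-heteroclinic $\tilde v_\infty$---are standard.
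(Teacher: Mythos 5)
Your proof is correct and follows the paper's overall strategy---a compactness-by-contradiction argument, bootstrapped from the $L^\infty$ bound of lemma~\ref{l:linf} and a uniform $H^1$ estimate, splitting into $\ell_n\to\ell_*<\infty$ and $\ell_n\to\infty$---but the handling of the unbounded case is packaged differently. Where you keep the two end windows separate, extract independent local limits $u_\infty^L,u_\infty^R$ near $x=0$ and $x=\ell_n$, and then rule out a sign mismatch by observing that an extra $\pm1\mapsto\mp1$ transition in the bulk would break the energy budget $E(u_n)\to c_0$, the paper instead performs a ``fold-and-negate'' surgery: cut $u_n$ at $\ell_n/2$, reflect the right half about the horizontal axis, and translate it to the left of the origin, so the reflected-and-glued $\tilde u_n$ and $\tilde v_{\ell_n}$ become functions on $(-\ell_n/2,\ell_n/2)$ converging locally to a single profile on $\R$, reducing the identification to $\tilde u=\pm v_\infty$. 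Both versions ultimately hinge on the same fact---that the energy lost in the bulk must vanish, which forbids a sign discrepancy between the ends---but your two-sided limit makes this step explicit, whereas the paper compresses it into the remark that the conclusion follows from ``the uniform energy bound \ldots and the method of construction.'' In that sense your writeup is arguably a clearer account of the sign-matching obstacle you correctly flag as the crux; the paper's gluing trick buys a slightly more unified statement of the local compactness (a single limit on $\R$ rather than two limits on $\R_+$) at the cost of hiding the bulk-energy argument. The remaining ingredients you invoke (weak lower semicontinuity, the ODE identification of $\pm v_\ell$ and $\pm\tilde v_\infty$ as the only minimizers, Modica--Mortola for the bulk lower bound) match the paper's step~2.
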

The preceding lemmas lead to the following nonlinear energy gap inequality in the case of small energy gap.
%We finish the proof of the energy gap inequality, by using lemma 3.7 and lemma 3.8 from \cite{OR}, that is:

\begin{lemma}[Energy gap inequality for small energy gap]\label{eginequsmall}
	There exist $\ell_{2},C_{\mathrm{ed}},\gamma\in(0,\infty)$, such that for all $\ell\geq\ell_{2}$, there holds
	\begin{align*}
		\energy(u)-\energy(v)\leq \gamma\qquad\Rightarrow\qquad
		\min_{\mp}\norm{u\mp v}_{H_{1}}^{2}\leq C_{\mathrm{ed}}\big(\energy(u)-\energy(v)\big).
	\end{align*}
\end{lemma}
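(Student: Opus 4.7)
The plan is to combine the uniform closeness from lemma \ref{uniformcloseness} with the linearized energy gap estimate from lemma \ref{l:line} via a standard Taylor expansion. First, I apply lemma \ref{uniformcloseness} with a threshold $\epsilon$ (to be fixed later in terms of universal constants) so that, for $\gamma$ sufficiently small and some sign $\sigma\in\{+1,-1\}$, we have $\sup_{[0,\ell]}\abs{u-\sigma v}\leq \epsilon$. Since $G$ is even, $E(v)=E(-v)$ and the positive energy-optimal profile $v$ and its reflection $-v$ both solve $-v_{xx}+G'(v)=0$; hence without loss of generality we may take $\sigma=+1$.

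Set $f:=u-v$. Because $u(0)=u(\ell)=v(0)=v(\ell)=0$, we also have $f(0)=f(\ell)=0$, so $f$ is admissible in lemma \ref{l:line}. Expanding $G$ to second order,
\begin{align*}
\energy(u)-\energy(v)=\int \tfrac12 f_x^2+v_x f_x+G'(v)f+\tfrac12 G''(v)f^2+R(v,f)\,\dd x,
\end{align*}
where the remainder satisfies $\abs{R(v,f)}\lesssim \abs{f}^{3}$ on the relevant range of $v+f$ (use lemma \ref{l:linf} to bound $\sup\abs{v+f}$ and thus $\sup\abs{G'''}$). Integration by parts together with the Euler--Lagrange equation $-v_{xx}+G'(v)=0$ and the boundary conditions on $f$ annihilates the linear cross terms:
\begin{align*}
\int v_x f_x+G'(v)f\,\dd x=\int (-v_{xx}+G'(v))f\,\dd x=0.
\end{align*}
Hence
\begin{align*}
\energy(u)-\energy(v)\geq \tfrac12\int f_x^2+G''(v)f^2\,\dd x-C\!\int\abs{f}^{3}\,\dd x.
\end{align*}

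Next, I use $\sup\abs{f}\leq \epsilon$ to estimate $\int\abs{f}^{3}\,\dd x\leq \epsilon\int f^{2}\,\dd x\leq \epsilon\,\norm{f}_{H^{1}}^{2}$, and apply lemma \ref{l:line} to obtain
\begin{align*}
\int f_x^{2}+G''(v)f^{2}\,\dd x\geq \frac{4}{C_{\mathrm{ed}}}\norm{f}_{H^{1}}^{2}.
\end{align*}
Combining these two bounds gives
\begin{align*}
\energy(u)-\energy(v)\geq \Bigl(\tfrac{2}{C_{\mathrm{ed}}}-C\epsilon\Bigr)\norm{f}_{H^{1}}^{2}.
\end{align*}
Choosing $\epsilon=\epsilon(C_{\mathrm{ed}})$ small enough that $C\epsilon\leq \tfrac{1}{C_{\mathrm{ed}}}$, and then invoking lemma \ref{uniformcloseness} to pick $\gamma$ so small that the hypothesis $\energy(u)-\energy(v)\leq\gamma$ forces this value of $\epsilon$, yields the desired inequality with a new constant (which, by relabeling, we may still call $C_{\mathrm{ed}}$).

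The only real obstacle is bookkeeping: ensuring the same letter $C_{\mathrm{ed}}$ can be used throughout (this is cosmetic, handled by relabeling) and checking that the cubic Taylor remainder is uniformly controlled. The latter needs $\sup\abs{v+f}$ to lie in a fixed compact set so $\sup\abs{G'''(\cdot)}$ is bounded; this follows from $\sup\abs{v}\leq 1$ and $\sup\abs{f}\leq\epsilon$ once uniform closeness is invoked. No fundamentally new ingredient is needed beyond the three lemmas already stated.
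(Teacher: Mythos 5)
Your proof is correct and follows essentially the same route as the paper, which combines lemmas \ref{l:linf}, \ref{l:line}, and \ref{uniformcloseness} via Taylor's formula (citing \cite[lemma 3.7]{OR}). One small flag: remark \ref{rem:G} only assumes $G\in C^2$, so the cubic-remainder bound $\abs{R(v,f)}\lesssim\abs{f}^3$ (which implicitly requires a bound on $G'''$) is not directly available; the fix is to use the Lagrange form $R=\tfrac12\bigl(G''(v+\theta f)-G''(v)\bigr)f^2$ and uniform continuity of $G''$ on the compact set $[-1-\epsilon,1+\epsilon]$, giving $\abs{R}\leq\omega(\epsilon)f^2$ with $\omega(\epsilon)\to 0$, after which the rest of your argument is unchanged.
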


%\begin{lemma}[Energy gap inequality for small energy gap]\label{eginequsmall}
%There exists $C_{\mathrm{ed}}\in(0,\infty)$ with the following property.
%	For every $C_{1}\in(0,\infty)$, there exists $\ell_2\in(0,\infty)$, $\gamma>0$ such that for all $\ell\geq\ell_{2}$ and $\norm{u-v}_{L^{2}}\leq C_{1}$, there holds
%	\begin{align*}
%		\energy(u)-\energy(v)\leq \gamma\qquad\Rightarrow\qquad
%		\norm{u-v}_{H_{1}}^{2}\leq C_{\mathrm{ed}}\big(\energy(u)-\energy(v)\big).
%	\end{align*}
%\end{lemma}
%\begin{proof}
%	With lemmas \ref{l:linf}, \ref{l:line}, and \ref{uniformcloseness} in hand, the result follows directly from Taylor's formula (cf. \cite[lemma 3.7]{OR}).
%%	With lemma \ref{uniformcloseness} at hand, we can use Taylor and the linearized energy gap to prove the proposition as in \cite[lemma 3.7]{OR}.
%%	Note, that the linearized energy gap \cite[proposition 3.1]{OR} does not assume positivity of the functions.
%\end{proof}

On the other hand, we use a rough bound in the case of large energy gap.
\begin{lemma}[Energy gap inequality for large energy gap]\label{egilarge}
	For every $C_{1}\in (0,\infty)$, $\gamma>0$, $\ell\geq\ell_{2}$ there exists $C_{\mathrm{ed}}\in (0,\infty)$ such that for $\norm{u-v}_{L^{2}}^2\leq C_{1}$, there holds
	\begin{align*}
		\energy(u)-\energy(v)\geq \gamma\qquad\Rightarrow\qquad
		\norm{u-v}_{H_{1}}^{2}\leq C_{\mathrm{ed}}\big(\energy(u)-\energy(v)\big).
	\end{align*}
\end{lemma}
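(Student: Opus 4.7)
The plan is to obtain $\|u-v\|_{H^1}^2$ as a sum of two pieces and bound each trivially. The hypothesis $E(u)-E(v)\geq \gamma$ will be used in the simplest possible way: as a lower bound that lets us absorb any universal constant into a term proportional to $E(u)-E(v)$.

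First I would handle the $L^2$ part. The hypothesis $\|u-v\|_{L^2}^2\leq C_1$ combined with $E(u)-E(v)\geq \gamma$ immediately yields
\begin{equation*}
\|u-v\|_{L^2}^2 \leq C_1 \leq \frac{C_1}{\gamma}\bigl(E(u)-E(v)\bigr).
\end{equation*}
For the $\dot H^1$ part, I would write
\begin{equation*}
\|u_x-v_x\|_{L^2}^2 \leq 2\|u_x\|_{L^2}^2 + 2\|v_x\|_{L^2}^2 \leq 4E(u) + 4E(v) = 4\bigl(E(u)-E(v)\bigr) + 8E(v),
\end{equation*}
using only the nonnegativity of $G$ in the definition of $E$. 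The only auxiliary fact I would need is that the energy $E(v)$ of the positive energy-optimal profile on $(0,\ell)$ subject to zero boundary data is bounded above by a universal constant $C_0$ as soon as $\ell\geq \ell_2$. This is standard: writing the first integral $\tfrac12 v_x^2 = G(v)-G(v_{\max})$ and sending $v_{\max}\to 1$ as $\ell$ grows, one sees that $E(v)$ converges to twice the energy of the infinite-line minimizer, so after enlarging $\ell_2$ if necessary, $E(v)\leq C_0$. Using $E(u)-E(v)\geq \gamma$ once more,
\begin{equation*}
\|u_x-v_x\|_{L^2}^2 \leq 4\bigl(E(u)-E(v)\bigr) + \frac{8C_0}{\gamma}\bigl(E(u)-E(v)\bigr).
\end{equation*}
Adding the two bounds gives the claim with $C_{\mathrm{ed}}=4+(8C_0+C_1)/\gamma$.

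There is no real obstacle here; the lemma is the trivial companion to lemma \ref{eginequsmall}, designed precisely to cover the regime where the small-energy-gap linearization is unavailable. The only mild subtlety is ensuring that the constant does not blow up with $\ell$, which is why one records that $E(v)$ is uniformly bounded for $\ell\geq\ell_2$; no a priori $L^\infty$ bound or compactness argument on $u$ is needed.
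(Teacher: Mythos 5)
Your argument is correct and is essentially the same as the paper's one-line estimate $\norm{u-v}_{H^1}^2\leq 2\norm{u-v}_{L^2}^2+4(E(u)+E(v))\leq 2C_1+8E(v)+4(E(u)-E(v))$, followed by absorbing the constant using $E(u)-E(v)\geq\gamma$ and the uniform bound on $E(v)$. (One tiny slip: the positive profile on $(0,\ell)$ vanishing at both endpoints rises to near $1$ and returns, so its energy converges to \emph{one} kink energy $c_0$, not twice that; this is immaterial to the bound.)
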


\begin{proof}[Proof of \eqref{L1b.1} of lemma \ref{l:eedraw}]
From lemma \ref{l:linf} we deduce the right-hand side inequality of \eqref{L1b.1}.
Indeed, for $u\in \mathcal{M}_N(\ell,C_H,C_E)$  we estimate
\begin{eqnarray}
	\energy(u)-\energy(v)
	&=&\int\frac{1}{2}(u_{x}-v_x)^{2}+v_x(u_{x}-v_x)+G(u)-G(v)\,\dd x\notag\\
&=&\int\frac{1}{2}(u_{x}-v_x)^{2}-v_{xx}(u-v)+G(u)-G(v)\,\dd x\notag\\
	 &\stackrel{\eqref{stationary_AC}}{=}&\int\frac{1}{2}(u_{x}-v_x)^{2}-G'(v)(u-v)+G(u)-G(v)\,\dd x\notag\\
	&{\leqsim}& \norm{u_{x}-v_x}_{L^{2}}^{2}+\sup_{\abs{s}\leq 1+\norm{u}_{L^\infty}}\abs{G''(s)}\norm{u-v}_{L^2}^{2}
	\leqsim_{C_{E}} \norm{u-v}_{H^{1}}^{2}.\label{eeda}
\end{eqnarray}
The left-hand inequality in \eqref{L1b.1} follows from the combination of lemma \ref{l:uv}, the bound \eqref{eeda}, lemma \ref{eginequsmall}, and lemma \ref{egilarge}. Both estimates hold uniformly for energy gap smaller than the constant $\gamma$ from lemma \ref{eginequsmall}.
\end{proof}

\begin{proof}[Proof of lemma \ref{uniformcloseness}]
	Now suppose for a contradiction that there are interval lengths $\ell_{n}\to\ell\in [\ell_{2},\infty]$ and functions $u_{n}\in H_{0}^{1}((0,\ell_{n}))$
	such that
	\begin{align}\label{consequenceofassumptions}
		\energy_{\ell_{n}}(u_{n})-\energy_{\ell_{n}}(v_{\ell_{n}})\to 0\quad\text{while}\quad
		\lim_{n\to\infty}\sup_{[0,\ell_{n}]}\min_{\mp}\abs{u_{n}\mp v_{\ell_{n}}}\geq \epsilon>0,
	\end{align}
	where $v_{\ell_n}$, $E_{\ell_n}$ denote the positive energy-optimal profile and energy on $(0,\ell_n)$.

	We observe the following uniform estimate for any $b\in (0,\ell)$ and $n$ large enough:
	\begin{align}
	 	\norm{u_{n}}_{H^1((0,b))}^{2}&\leq \bigl(1+b^{2}\bigr)\norm{u_{nx}}_{L^2((0,b))}^{2}\lesssim (1+b^{2})\energy_{\ell_{n}}(u_{n})\overset{\eqref{consequenceofassumptions}}\lesssim 1+b^{2},\label{unib}
	\end{align}
	where we have used the Poincar\'e inequality and $v_\ell$ is the uniform limit on $(0,b)$ of $v_{\ell_n}$.
	Thus, we find a subsequence and a limit $u\in H^1$ such that on compact subsets we have that
	\begin{align}\label{unlocalconvergence}
		u_{n}\rightharpoonup u\quad\text{in }H^1,\quad
		u_{n}\rightarrow u\quad\text{in }C\text{ and }L^{2}.
	\end{align}

	\step{Step 1.}
		We show $u=\pm v_{\ell}$.
		In case $\ell<\infty$, this follows by $b\uparrow \ell$ in
		\begin{align*}
			\energy_{b}(u)\stackrel{\text{\eqref{unlocalconvergence}}}{\leq} \liminf_{n\to\infty}\energy_{b}(u_{n})
			\leq \liminf_{n\to\infty}\energy_{\ell_{n}}(u_{n})
			 \stackrel{\text{\eqref{consequenceofassumptions}}}{=}\liminf_{n\to\infty}\energy_{\ell_{n}}(v_{\ell_{n}})
			=\energy_{\ell}(v_{\ell}),
		\end{align*}
		together with the uniqueness (up to the sign) of the energy-optimal profile.

		In case $\ell=\infty$, we cut $u_{n}$ and $v_{\ell_{n}}$ at $\frac{\ell_{n}}{2}$, reflect the right-hand side about the horizontal axis and glue this function
		at the origin to the remaining part, i.e., we consider the mapping
		\begin{align*}
			\tilde f:\left(-\tfrac{\ell_{n}}{2},\tfrac{\ell_{n}}{2}\right)\to\R,\, x\mapsto
			\begin{cases}
				-f(\ell_{n}-x),&x\in \bigl(-\frac{\ell_{n}}{2},0\bigl)\\
				f(x),&x\in \bigl[0,\frac{\ell_{n}}{2}\bigr).
			\end{cases}
		\end{align*}
		The energy is invariant under this transformation.
		The  argument from above gives \eqref{unlocalconvergence} on compact subsets for $\tilde u_{n}, \tilde u$, and $\tilde v_{\infty}$.
%		Moreover, strong $L^2$ convergence, the $L^2$ convergence of $\tilde{v}_{\ell_n}$ to $v_\infty$, and $b\uparrow \infty$ give
%		\begin{align}\label{uvL2close}
%			\norm{\tilde u-v_{\infty}}_{L^{2}(\R)}^2\leq C_1.
%		\end{align}
		On the other hand, we estimate the energy via
		\begin{eqnarray*}
			\energy_{(-b,b)}(\tilde u)&\stackrel{\text{\eqref{unlocalconvergence}}}{\leq} & \liminf_{n\to\infty}\energy_{(-b,b)}(\tilde u_{n})
			\leq \liminf_{n\to\infty}\energy_{(-\frac{\ell_{n}}{2},\frac{\ell_{n}}{2})}(\tilde u_{n})\\ &\stackrel{\text{\eqref{consequenceofassumptions}}}{=}&\liminf_{n\to\infty}\energy_{(-\frac{\ell_{n}}{2},\frac{\ell_{n}}{2})}(\tilde v_{\ell_{n}})
			=\energy_{(-\infty,\infty)}(v_{\infty})=c_{0}.
		\end{eqnarray*}
		Taking $b\uparrow \infty$, we see that the energy of $\tilde u$ is bounded by that of a kink. Finite energy gives $\abs{\tilde u(\pm\infty)}=1$.
		Recalling $\tilde u(0)=0$ and considering the uniform energy bound on $\tilde{u}_n$ and the method of construction, we infer $\tilde{u}=\pm v_\infty$.\\
		%The possibility that $\tilde{u}$ consists of any other combination of $\pm v_{\infty}$ would imply that $u_{n}$ has a zero that is order one away from both $0$ and $\ell_{n}$.
%		This situation is excluded by \eqref{unlocalconvergence}, the energy bound, and the technique of Modica and Mortola.

	\step{Step 2.}
		In this step we will derive a contradiction.
		In case $\ell<\infty$, the uniform convergence from \eqref{unlocalconvergence} and the uniform convergence of $v_{\ell_n}$ to $v_\ell$ rule out the second item in \eqref{consequenceofassumptions}.
		For the case $\ell=\infty$, we will use the uniform convergence of $\tilde u_{n}$ to $\pm v_{\infty}$ on compact sets from step 1 to reach a contradiction.
		Indeed, we deduce:
		\begin{align}
			&\tilde{u}_n(\pm b) \text{ is arbitrarily close to $\pm 1$ or $\mp 1$ for $b$ and $n$ large enough},\label{abc}\\
			&\min_{\mp}\norm{\tilde{u}_{n}\mp \tilde{v}_{\ell_{n}}}_{L^{\infty}((-b,b))}< \frac{\epsilon}{2} \text{ for any $b$ and for $n$ large enough},\label{unifromconvergenceb}\\
			& E_{(-b,b)}(\tilde{u}_n)\geq c_0-\delta\text{ for any $\delta>0$, for $b$ and $n$ large enough},\label{okdk}
		\end{align}
		where in the last item we have used \eqref{abc} and the trick of Modica and Mortola. From \eqref{unifromconvergenceb} and
		\eqref{consequenceofassumptions}, we deduce the existence of a point $x_n\in(b,\frac{\ell_n}{2})\cup (-\frac{\ell_n}{2},-b)$ such that
		\begin{align}
			\min_{\mp}\abs{\tilde{u}_n(x_n)\mp\tilde{v}_{\ell_n}(x_n)}\geq \epsilon.\notag
		\end{align}
		Without loss of generality, suppose that $x_n\in(b,\frac{\ell_n}{2})$ so that for $b$ large there holds in addition
		\begin{align}
			\min_{\mp}\abs{\tilde{u}_n(x_n)\mp 1}\geq \frac{\epsilon}{2}.  \label{bigtrouble}
		\end{align}
		Combining \eqref{abc} and \eqref{bigtrouble} and again applying the Modica-Mortola trick, we deduce the existence of $\delta>0$ such that
		\begin{align*}
			E_{(b,\frac{\ell_n}{2})}(\tilde{u}_n)\geq 2\delta.
		\end{align*}
		But together with \eqref{okdk}, this contradicts  the first item in \eqref{consequenceofassumptions} via
		\begin{align*}
			E_{(-\frac{\ell_n}{2},\frac{\ell_n}{2})}(\tilde{u}_n)=E_{\ell_n}(u_n)\geq c_0+\delta \qquad \text{for $\delta>0$ and all }n.
		\end{align*}
\end{proof}
\begin{proof}[Proof of lemma \ref{eginequsmall}]
	With lemmas \ref{l:linf}, \ref{l:line}, and \ref{uniformcloseness} in hand, the result follows directly from Taylor's formula (cf. \cite[lemma 3.7]{OR}).
%	With lemma \ref{uniformcloseness} at hand, we can use Taylor and the linearized energy gap to prove the proposition as in \cite[lemma 3.7]{OR}.
%	Note, that the linearized energy gap \cite[proposition 3.1]{OR} does not assume positivity of the functions.
\end{proof}
\begin{proof}[Proof of lemma \ref{egilarge}]
%	We closely follow the proof in \cite{OR}. From lemma \ref{l:uv}, we know that
%	\begin{align*}
%		\norm{u-v}_{L^{2}}\leqsim C_{H}^{\frac{1}{3}}+C_{H}^{\frac{1}{2}}(C_{E}^{\frac{1}{2}}+1)=:C_{*}.
%	\end{align*}
	We use the simplistic estimate
	\begin{align*}
		\norm{u-v}_{H^{1}}^{2}&\leq 2\norm{u-v}_{L^2}^2+4\bigl(\energy(u)+\energy(v)\bigr)
		\leq 2C_{1}+8E(v)+4\bigl(\energy(u)-\energy(v)\bigr)\\
		&\leq \left(\frac{C}{\gamma}+4\right)\bigl(\energy(u)-\energy(v)\bigr).
	\end{align*}
\end{proof}

\subsection{Dissipation estimate}\label{ss:app_diss}

Here we prove \eqref{L1b.2} of lemma \ref{l:eedraw}. As in the previous subsection, we collect the ingredients, show how they combine to establish \eqref{L1b.2}, and finish with the proofs of the lemmas.
We begin with a variant of \cite[Lemma 3.4]{OW} for the half space $\R_{+}$.
%A variant of the following lemma %describes the kernel of the linearized dissipation operator and was already featured in \cite[Lemma 3.4]{Otto2014a}.
%was already shown in \cite[Lemma 3.4]{OW}.
%(By a minor modification, we remove the assumption $f_{xx}\in L^{2}$.)

\begin{lemma}[Kernel of linear dissipation operator]\label{kernellineardisspationhalfspace}
	If $f\in C^{3}(\R_{+})\cap H^{1}(\R_{+})$ satisfies
	\begin{align}\label{operatorLequalzerohalfspace}
		\operatorL f=\bigl(-f_{xx}+G''(v_{\infty})f\bigr)_{x}=0,
	\end{align}
	then $f=\alpha v_{\infty x}$ for some $\alpha\in\R$.
\end{lemma}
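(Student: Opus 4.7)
The plan is to reduce the third-order equation $\operatorL f=0$ to a second-order one, show the integration constant vanishes, and then identify $f$ as a multiple of $v_{\infty x}$ via a standard two-dimensional ODE kernel argument.

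First I would integrate once. Since $f\in C^3(\R_+)$ satisfies $(-f_{xx}+G''(v_\infty)f)_x=0$ pointwise on $\R_+$, there is a constant $c\in\R$ with
\begin{align*}
-f_{xx}+G''(v_\infty)f=c\qquad\text{on }\R_+.
\end{align*}
The substantive step is to show $c=0$. Because $f\in H^1(\R_+)$, Sobolev embedding gives $f\in C_0(\R_+)$, so $f$ is bounded and $f(x)\to 0$ as $x\to\infty$. The equation then forces $f_{xx}=G''(v_\infty)f-c\in L^\infty(\R_+)$, and the Landau interpolation inequality on $(M,\infty)$ applied to the bounded function $f$ yields
\begin{align*}
\norm{f_x}_{L^\infty((M,\infty))}^2\lesssim \norm{f}_{L^\infty((M,\infty))}\norm{f_{xx}}_{L^\infty((M,\infty))}\longrightarrow 0\quad\text{as }M\to\infty.
\end{align*}
Hence $f_x(x)\to 0$ as $x\to\infty$ as well. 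Passing to the limit $x\to\infty$ in the equation and using $G''(v_\infty(x))\to G''(\pm 1)=C_G^2$ and $f(x)\to 0$ yields $\lim_{x\to\infty}f_{xx}(x)=-c$. If $c\neq 0$, then $f_x$ would behave like $-c\cdot x$ near infinity, contradicting $f_x\to 0$. Therefore $c=0$.

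It remains to identify solutions of the second order equation $L_0 f:=-f_{xx}+G''(v_\infty)f=0$ on $\R_+$ in $H^1(\R_+)$. Differentiating the profile equation $-v_{\infty xx}+G'(v_\infty)=0$ shows $L_0 v_{\infty x}=0$, so $v_{\infty x}$ is one solution; since $G''(\pm 1)=C_G^2>0$, it decays like $e^{-C_G\abs{x}}$. Reduction of order produces a second linearly independent solution
\begin{align*}
g_2(x)=v_{\infty x}(x)\int_{x_0}^{x}\frac{\dd y}{v_{\infty x}(y)^2},
\end{align*}
which, since $v_{\infty x}(y)\sim e^{-C_G y}$ at infinity, grows like $e^{C_G x}$ and in particular is not in $L^2(\R_+)$. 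Writing the general solution as $f=\alpha v_{\infty x}+\beta g_2$, the condition $f\in H^1(\R_+)\subset L^2(\R_+)$ forces $\beta=0$, giving $f=\alpha v_{\infty x}$ as claimed.

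The main obstacle is the step $c=0$: the half-line setting precludes any boundary condition at $x=0$, so one cannot conclude directly by integrating the equation against a test function. Instead one must extract decay of $f_x$ at infinity from mere $H^1$-membership of $f$, which is why I route through the Landau interpolation inequality and the $L^\infty$ bound on $f_{xx}$ coming from the equation itself. Once this asymptotic information is in hand, the remainder is a routine ODE analysis using the exponential dichotomy of solutions of $L_0 g=0$ at infinity.
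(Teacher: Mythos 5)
Your proof is correct, and it follows the same overall structure as the paper's: integrate the third-order equation once, show the constant of integration vanishes using decay at infinity, and then identify the one-dimensional $L^{2}$-kernel of the second-order operator. The differences are in the technical implementation of the two middle steps. To kill the constant, the paper first bootstraps from $f\in H^{1}(\R_{+})$ and $\operatorL f=0$ to $f\in H^{3}(\R_{+})$, so that $f,f_{x},f_{xx}\to 0$ at infinity all at once, and then integrates $\operatorL f = 0$ from $x$ to $\infty$ to obtain $-f_{xx}+G''(v_{\infty})f=0$ directly (no constant ever appears); you instead introduce the constant $c$, get $f_{x}\to 0$ via the Landau--Kolmogorov inequality combined with the $L^{\infty}$ bound on $f_{xx}$ from the equation, and deduce $c=0$ from $f_{xx}\to -c$. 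To identify the kernel, the paper uses the Wronskian $f_{x}v_{\infty x}-fv_{\infty xx}$, shows it is constant and vanishes at infinity, and integrates $(f/v_{\infty x})'=0$; you use reduction of order to exhibit the second solution $g_{2}\sim e^{C_{G}x}$ and discard it because it is not square-integrable. Both pairs of arguments are equivalent in substance, so there is no real gap; your version is perhaps slightly more explicit about why $c=0$, while the paper's $H^{3}$-bootstrap is marginally more compact.
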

Next we deduce a linearized dissipation estimate (similar to \cite[Lemma 3.2]{OW}).
\begin{lemma}[Linearized dissipation estimate]\label{linearizeddiss}
	For every $C_{1}$ there exist $\ell_{1},C_{2}\in (0,\infty)$ such that for all $\ell\geq \ell_{1}$ and all smooth functions $f$ with
	\begin{align*}
		f(0)=f(\ell)=0,\qquad \text{and}\qquad \int_{0}^{\ell}f^{2}\,\dd x\leq C_{1},
	\end{align*}
	there holds
	\begin{align}\label{linearizeddissipationestimate}
		\int_{0}^{\ell}\frac{f^{2}}{1+x^{2}}+f_{x}^{2}+f_{xx}^{2}+f_{xxx}^{2}\,\dd x
		\leq C_{2}\int_{0}^{\ell}\left(\bigl(f_{xx}-G''(v_{\ell})f\bigr)_{x}\right)^{2}\,\dd x.
	\end{align}
\end{lemma}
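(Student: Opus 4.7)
The plan is to argue by contradiction via compactness together with the kernel characterization in lemma~\ref{kernellineardisspationhalfspace}. Suppose the estimate fails. Then there exist sequences $\ell_{n}\to \ell_{\infty}\in[\ell_{1},\infty]$ and smooth $f_{n}$ with $f_{n}(0)=f_{n}(\ell_{n})=0$ and $\norm{f_{n}}_{L^{2}(0,\ell_{n})}^{2}\leq C_{1}$ for which the ratio of the left-hand side to the right-hand side of \eqref{linearizeddissipationestimate} tends to infinity. Normalize so that the left-hand side equals $1$; then $\norm{\operatorL f_n}_{L^2(0,\ell_n)}^2 \to 0$, where (with a slight abuse of notation) $\operatorL$ denotes the linearization around $v_{\ell_n}$.

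The main work is to extract a limit and to upgrade convergence enough to reach a contradiction. Since the left-hand side norm controls $f_{nx}, f_{nxx}, f_{nxxx}$ in $L^{2}$ and $f_{n}/(1+x)$ in $L^{2}$ uniformly, a diagonal subsequence converges strongly in $C^{2}_{\mathrm{loc}}$ and weakly in the corresponding weighted $H^{3}$ space to a limit $f$ defined on $(0,\ell_{\infty})$ (interpreted as $\R_+$ if $\ell_{\infty}=\infty$). In the half-line case, extend $f_n$ by zero past $\ell_n$; the exponential convergence of $v_{\ell_n}$ to $v_\infty$ on compacts (together with the uniform $L^\infty$ control on $f_n$ coming from the elementary bound \eqref{elementa}) lets us pass to the limit in the equation $\operatorL f_n \to 0$, so that the limit satisfies $(-f_{xx}+G''(v_{\infty})f)_{x}=0$. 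Lemma~\ref{kernellineardisspationhalfspace} then forces $f=\alpha v_{\infty x}$, and the boundary condition $f(0)=0$ combined with $v_{\infty x}(0)>0$ gives $\alpha=0$, hence $f\equiv 0$. In the finite case, $v_{\ell_n}$ converges to $v_{\ell_\infty}$ in $C^2$ on $[0,\ell_\infty]$ and the limit $f$ solves $-f_{xx}+G''(v_{\ell_\infty})f = c$ for some constant $c$ with $f(0)=f(\ell_\infty)=0$; since $-\partial_{xx}+G''(v_{\ell_\infty})$ with Dirichlet conditions is invertible (by Sturm--Liouville applied to $v_{\ell_\infty,x}$, which does not satisfy Dirichlet conditions and is the first eigenfunction of the unconstrained problem), one checks directly that only $f\equiv 0$ is possible.

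The hard part is to upgrade $f_n \to 0$ from local to global, so as to contradict the normalization that the left-hand side equals $1$. For the weighted $L^2$ piece, the tail $\int_{(R,\ell_n)} f_n^2/(1+x^2)\,\dd x$ is made small uniformly in $n$ by choosing $R$ large, using the uniform bound on $\int f_{nx}^{2}\,\dd x$ together with $1/(1+x^{2}) \leq 1/(1+R^{2})$; on $(0,R)$ strong convergence follows from Rellich. Given strong convergence of $f_n \to 0$ in the weighted $L^2$, the differential inequality is used to upgrade to strong convergence in $H^3$: multiplying $\operatorL f_n = g_n$ by $f_{nxx}$ and integrating by parts (plus the coercivity coming from $G''(\pm 1)>0$ away from a bounded region where $v$ transitions) produces an estimate of the form $\int f_{nxxx}^2 \lesssim \int g_n^2 + \int (\text{lower order}) f_n^2/(1+x^2)\,\dd x$, both of which tend to zero. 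This yields the contradiction $0 = 1$.

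The principal obstacle is thus the uniform tail control needed for compactness in the weighted $L^2$ space and the subsequent upgrade to strong $H^3$ convergence; once these are established, the kernel lemma closes the argument.
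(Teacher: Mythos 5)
Your overall strategy is the same as the paper's---a compactness/contradiction argument in which the local limit is killed by the kernel characterization of lemma~\ref{kernellineardisspationhalfspace}---and the reduction to $\ell_n\to\infty$, the extraction of $C^2_{\mathrm{loc}}$ limits, and the use of the kernel lemma are all in order. But the step where you claim to ``upgrade to strong $H^3$ convergence'' contains a genuine gap, and it is precisely the heart of the lemma.

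First, the estimate you assert, $\int f_{nxxx}^2 \lesssim \int g_n^2 + \int (\text{lower order})\,f_n^2/(1+x^2)\,\dd x$, is not correct as stated. Writing $g_n=\operatorL_n f_n$ out gives $f_{nxxx}=g_n+G''(v_{\ell_n})f_{nx}+G'''(v_{\ell_n})v_{\ell_n x}f_n$, so any $L^2$ bound on $f_{nxxx}$ must pass through a bound on $\int f_{nx}^2$, which is exactly the quantity the lemma is trying to control. Multiplying $\operatorL_n f_n = g_n$ by $f_{nxx}$ and integrating by parts does not produce coercivity in $\int f_{nx}^2$ either: the cross terms change sign in the transition region and there is no positive-definite structure to exploit at the level of a single multiplier. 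In fact, the right reduction (the paper's step~1) goes the other way: once $\int f_{nx}^2\lesssim\int(\operatorL_n f_n)^2$ is known, the weighted $L^2$, $\int f_{nxx}^2$, and $\int f_{nxxx}^2$ pieces all follow by Hardy and interpolation. So the whole burden lies in the $\int f_x^2$ bound, which your sketch never isolates.

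Second, your argument makes no mention of the boundary at $x=\ell_n$. The paper's proof relies crucially on extracting $C^2_{\mathrm{loc}}$ convergence to zero of both $f_n$ and the reflected sequence $\tilde f_n(x)=f_n(\ell_n-x)$; this double control is needed twice: (i) to replace $G''(v_{\ell_n})$ by the constant $G''(1)$ in the operator (step~4), since near either endpoint the potential coefficient differs from $G''(1)$ but $f_n$ itself is small there; and (ii) to kill the boundary term $\bigl[2G''(1)f_{nx}f_{nxx}\bigr]_0^{\ell_n}$ that arises when one expands $\int\bigl((f_{nxx}-G''(1)f_n)_x\bigr)^2$ as a sum of nonnegative terms plus a boundary contribution (step~5). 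That final algebraic identity is what gives the coercivity $G''(1)^2\int f_{nx}^2\leq\int(\operatorL_{\mathrm{const}}f_n)^2+\text{bdry}$, and it only works for the constant-coefficient operator. Without the constant-coefficient replacement and the two-sided $C^2_{\mathrm{loc}}$ control, the contradiction cannot be closed, and your sketch supplies neither.
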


The next goal is to show that small dissipation yields an $L^\infty$ bound.
\begin{lemma}[Small dissipation implies uniform closeness]\label{smalldissipationuniformclose}
	For every $\epsilon>0$ and every $C_{E},C_{1}>0$, there is an $\ell_{*}>0$ and $\gamma>0$ such that for all $\ell\in (\ell_{*},\infty)$ and
	\begin{align}\label{hypnonlindis}
		u\in H_{0}^{3}\bigl((0,\ell)\bigr)\qquad\text{with}\qquad
		\energy(u)\leq C_{E}\qquad\text{and}\qquad
		\int_{0}^{\ell}(u-v_{\ell})^{2}\,\dd x\leq C_{1},
	\end{align}
	there holds
	\begin{align}\label{smalldissipationlinfty}
		\dissipation(u)\leq \gamma\qquad\Rightarrow\qquad
		\sup_{[0,\ell]}\abs{u-v_{\ell}}\leq \epsilon.
	\end{align}
\end{lemma}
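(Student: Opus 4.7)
The plan is to argue by contradiction, following the strategy of the proof of lemma \ref{uniformcloseness}. Suppose the claim fails: there exist $\epsilon>0$, $C_E,C_1>0$, sequences $\ell_n\to\ell_\infty\in[\ell_*,\infty]$, and $u_n\in H_0^3((0,\ell_n))$ satisfying \eqref{hypnonlindis} with $D(u_n)\to 0$ while $\sup_{[0,\ell_n]}|u_n-v_{\ell_n}|\geq\epsilon$. The first task is to extract a suitable limit. The energy bound together with lemma \ref{l:linf} furnishes a uniform $L^\infty$ bound on $u_n$, so $G'(u_n)$ is uniformly bounded and $u_{nx}$ is uniformly bounded in $L^2$. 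Setting $w_n:=-u_{nxx}+G'(u_n)$, the hypothesis $D(u_n)\to 0$ reads $\norm{w_{nx}}_{L^2}\to 0$, so on each compact subinterval $w_n-\bar w_n\to 0$ in $L^\infty$ by the one-dimensional Sobolev embedding, where $\bar w_n$ denotes the local mean; combined with the $L^\infty$ bound on $G'(u_n)$, this also controls $u_{nxx}$ locally in $L^2$. A diagonal subsequence then yields $u_n\to u$ locally in $C^1$ and weakly in $H^2_{\mathrm{loc}}$, and the local means $\bar w_n$ converge to a single constant $c\in\R$ (their differences across subintervals are controlled by $\norm{w_{nx}}_{L^2}$), so that $u$ solves
\begin{equation*}
-u_{xx}+G'(u)=c
\end{equation*}
on the limiting interval, with $u(0)=0$ and also $u(\ell_\infty)=0$ when $\ell_\infty<\infty$.

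Next I would identify the limit and reach a contradiction. For $\ell_\infty<\infty$ the convergence is global on $[0,\ell_\infty]$, so the Dirichlet conditions pass to the limit, $E(u)\leq C_E$ holds by lower semicontinuity, and $\norm{u-v_{\ell_\infty}}_{L^2}^2\leq C_1$ by Fatou. The first integral $\tfrac12 u_x^2-G(u)+cu\equiv\mathrm{const}$ of the ODE together with Dirichlet data and the energy bound restricts $u$ to a discrete family of phase-plane orbits; the $L^2$-proximity to $v_{\ell_\infty}$, Modica--Mortola bookkeeping, and positivity of $v_{\ell_\infty}$ then force $c=0$ and $u\equiv v_{\ell_\infty}$, contradicting $\sup|u_n-v_{\ell_n}|\geq\epsilon$ via the $L^\infty_{\mathrm{loc}}$ convergence. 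For $\ell_\infty=\infty$ I would use the antisymmetric reflection trick from step 1 of lemma \ref{uniformcloseness}: reflect $u_n$ and $v_{\ell_n}$ about $x=\ell_n/2$ to extend them to $(-\ell_n,\ell_n)$ and pass to the limit on $\R$ to obtain a bounded entire solution of $-u_{xx}+G'(u)=c$. Finiteness of total energy compels $u(\pm\infty)\in\{\pm 1\}$, hence $c=G'(\pm 1)=0$, and then $u(0)=0$ selects $u=\pm v_\infty$; the Modica--Mortola argument at the end of the proof of lemma \ref{uniformcloseness} rules out the wrong sign, yielding the contradiction.

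The main obstacle I expect is the case $\ell_\infty=\infty$, where one must show that the limiting chemical potential $c$ vanishes and that the limit is the kink rather than a periodic orbit or a trivial constant. This is handled by Modica--Mortola energy quantization combined with the boundary value $u(0)=0$, exactly as in the proof of lemma \ref{uniformcloseness}. A secondary technical point is the consistency of the local means $\bar w_n$ across compact pieces, which follows immediately from $\norm{w_{nx}}_{L^2}\to 0$, and the justification that $\sup|u_n - v_{\ell_n}|$ is realized on a compact subinterval in the limit, which uses the exponential tail estimates comparing $v_{\ell_n}$ to $v_\infty$.
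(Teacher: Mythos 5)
Your overall strategy of contradiction with a compactness/normalization argument is the same as the paper's, and your handling of the convergence (passing to a limit satisfying $-u_{xx}+G'(u)=c$, showing $c=0$, and identifying the limit with $\pm v_\infty$ near the endpoints) is a sound, if slightly different, route: the paper instead derives a global $H^3$ bound from the dissipation and concludes $D(u)=0$ by lower semicontinuity, whereas you track local means of $w_n=-u_{nxx}+G'(u_n)$; both work. Your treatment of finite $\ell_\infty$ is harmless but unnecessary, since negating the statement already forces $\ell_n\to\infty$.

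The genuine gap is in the final contradiction. You propose to conclude with ``the Modica--Mortola argument at the end of the proof of lemma~\ref{uniformcloseness},'' but that argument is an energy-quantization argument: it derives a contradiction from the assumption that $E(u_n)-E(v_{\ell_n})\to 0$, by exhibiting an excess energy of a fixed positive amount in the interior. In the present lemma you only have the bound $E(u_n)\leq C_E$, not a vanishing energy gap, so an extra quantum of energy in the interior is perfectly consistent with the hypotheses and yields no contradiction. Thus the argument you cite simply does not apply. The paper closes the gap differently: after establishing that $u_n\to v_\infty$ locally near both endpoints (the positive sign being forced by the $L^2$ bound $\|u_n-v_{\ell_n}\|_{L^2}\leq C_1$), it uses the contradiction hypothesis to extract interior points $x_n\to\infty$, $\ell_n-x_n\to\infty$, with $|u_n(x_n)-1|\geq\epsilon/2$ \emph{and} $u_{nx}(x_n)=0$ (a critical point in the region of deviation). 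Translating by $x_n$ and passing to the limit gives $\tilde u$ with bounded energy, $D(\tilde u)=0$, and $\tilde u_x(0)=0$; the only bounded-energy solutions of $-\tilde u_{xx}+G'(\tilde u)=0$ on $\R_+$ with $\tilde u_x(0)=0$ are the constants $\pm 1$, and $|\tilde u(0)-1|\geq\epsilon/2$ forces $\tilde u\equiv -1$. This finally contradicts the $L^2$ bound $\|u_n-v_{\ell_n}\|_{L^2}\leq C_1$, which in the limit gives $\|\tilde u-1\|_{L^2(\R_+)}\leq C_1$. Your proof is missing this translation-to-a-critical-point step, which is the essential new idea compared to lemma~\ref{uniformcloseness}.
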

These tools suffice for the nonlinear dissipation estimate \eqref{L1b.2}.
%\begin{proposition}[Nonlinear dissipation estimate]
%	For any $C_{E}, C_{H}\in (0,\infty)$, there are $\ell_{1},C_{3}>0$ such that for $\ell\geq \ell_{1}$, $N\in\mathbb{N}$ and $u\in \mathcal{M}_{N}(\ell,C_{H},C_{E})$ we have that
%	\begin{align*}
%		\int {\ff}_{x}^{2}\,\dd x\leq C_{3}D.
%	\end{align*}
%\end{proposition}
\begin{proof}[Proof of \eqref{L1b.2} of lemma \ref{l:eedraw}]
	We consider a subinterval $I=(x_i,x_{i+1})$ between two zeros.
	By $D_I$ we denote the restriction of the dissipation to an interval:
	\begin{align*}
		D_I:=\int_I \left(\big(-u_{xx}+G'(u)\big)_x\right)^2\,\dd x.
	\end{align*}
	It suffices to show
	\begin{align*}
		\int_I {\ff}_{x}^2\,\dd x\lesssim D_I.
	\end{align*}
	Trivially, $D_I\gtrsim 1$ and $E(u)\leq C_E$ imply
	\begin{align*}
		\int_I {\ff}_{x}^2 \,\dd x\lesssim 1+C_E\lesssim_{\depE} D_I.
	\end{align*}
	Hence, it suffices to show that there exist $\gamma\in(0,\infty)$ such that, for
	\begin{align}
		D\leq \gamma,\label{dsmall}
	\end{align}
	there holds $\int_I {\ff}_{x}^2 \,\dd x\lesssim D_I.$ A useful identity in this part is
	\begin{align}
		\MoveEqLeft
		-u_{xx}+G'(u)=-{\ff}_{xx}+G'(u)-G'(v)\notag\\
		&=\big(-{\ff}_{xx}+G''(v){\ff}\big)+\big(G'(u)-G'(v)-G''(v){\ff}\big).\label{iddis}
	\end{align}
	We will use that small dissipation results in small $\norm{{\ff}}_{L^\infty(I)}$ (see lemma \ref{smalldissipationuniformclose}), together with the linearized dissipation estimate from lemma \ref{linearizeddiss}.
	Note that the energy of $u$ is bounded by assumption, so that \eqref{L1b.1} ensures the $L^{2}$ bound on $f$ that is necessary to apply lemmas \ref{linearizeddiss} and \ref{smalldissipationuniformclose}.
	Using \eqref{iddis} together with the triangle inequality, the linear dissipation estimate, and Taylor's formula leads to the lower bound
	\begin{align*}
		D_I\geq \frac{1}{2{C_{\mathrm{ed}}}}\int_I \frac{{\ff}^{2}}{1+x^{2}}+{\ff}_{x}^{2}\,\dd x-C\norm{{\ff}}_{L^\infty(I)}\int_I \frac{{\ff}^{2}}{1+x^{2}}+{\ff}_{x}^{2}\,\dd x,
	\end{align*}
	where ${C_{\mathrm{ed}}}$ is the constant from the linearized estimate \eqref{linearizeddissipationestimate} (see equation (3.34) in \cite{OW}).
	We now use lemma \ref{smalldissipationuniformclose} to choose $\gamma>0$ such that \eqref{dsmall} implies
	\begin{align*}
		\norm{{\ff}}_{L^\infty(I)}\leq \frac{1}{4 {C_{\mathrm{ed}}}C}.
	\end{align*}
\end{proof}
\begin{proof}[Proof of lemma \ref{kernellineardisspationhalfspace}]
	We first note that from the regularity of $f$ and \eqref{operatorLequalzerohalfspace} it follows that $f\in H^{3}(\R_{+})$ and hence
	\begin{align}\label{btoinfty}
		\lim_{x\to\infty}f(x)=\lim_{x\to\infty}f_{x}(x)=\lim_{x\to\infty}f_{xx}(x)=0.
	\end{align}
	Integrating \eqref{operatorLequalzerohalfspace} from $x$ to $\infty$ gives
	\begin{align}\label{lambdaequalzerohalfspace}
		-f_{xx}+G''(v_{\infty})f=0.
	\end{align}
	According to \eqref{lambdaequalzerohalfspace} and the equation for $v$, we have
	\begin{align*}
		(f_{x}v_{\infty x}-fv_{\infty xx})_{x}=f_{xx}v_{\infty x}-fv_{\infty xxx}
		=G''(v_{\infty})fv_{\infty x}-fG''(v_{\infty})v_{\infty x}=0,
	\end{align*}
	so that
	\begin{align*}
		f_{x}v_{\infty x}-fv_{\infty xx}=\lambda\qquad\text{for some}\quad\lambda\in\R.
	\end{align*}
	From \eqref{btoinfty} and the properties of $v_{\infty}$, we deduce $\lambda=0$. Hence $f=\alpha v_{\infty x}$ follows from
	\begin{align*}
		\frac{\dd}{\dd x}\left(\frac{f}{v_{\infty x}}\right)=\frac{f_{x}v_{\infty x}-fv_{\infty xx}}{v_{\infty x}^{2}}=0.
	\end{align*}
\end{proof}

\begin{proof}[Proof of lemma \ref{linearizeddiss}]
	\step{Step 1.}
		We start by showing that it is sufficient to establish
		\begin{align}\label{keyest}
			\int_{0}^{\ell}f_{x}^{2}\,\dd x\lesssim  \int_{0}^{\ell}\Bigl(\bigl(f_{xx}-G''(v_{\ell})f\bigr)_{x}\Bigr)^{2}\,\dd x.
		\end{align}
		Indeed, the first summand in \eqref{linearizeddissipationestimate} is bounded by Hardy's inequality together with \eqref{keyest}.
		By interpolation, it suffices to bound the third derivatives.
		On the one hand, we obtain
		\begin{align*}
			\int_{0}^{\ell}f_{xxx}^{2}\,\dd x
			\lesssim  \int_{0}^{\ell}\Bigl(f_{xxx}-\bigl(G''(v_{\ell})f\bigr)_{x}\Bigr)^{2}\,\dd x+\int_{0}^{\ell}\Bigl(\bigl(G''(v_{\ell})f\bigr)_{x}\Bigr)^{2}\,\dd x.
		\end{align*}
		On the other hand, we have that
		\begin{align*}
			\MoveEqLeft
			\int_{0}^{\ell}\Bigl(\bigl(G''(v_{\ell})f\bigr)_{x}\Bigr)^{2}\,\dd x
			\lesssim \int_{0}^{\ell}\Bigl(G'''(v_{\ell})v_{\ell x}f\Bigr)^{2}\,\dd x+\int_{0}^{\ell}\bigl(G''(v_{\ell})f_{x}\bigr)^{2}\,\dd x\\
			&\lesssim \sup_{(0,\ell)}\abs*{\left(1+x^{2}\right)\left(G'''(v_{\ell})v_{\ell x}\right)^{2}}\int_{0}^{\ell}\frac{f^{2}}{1+x^{2}}\,\dd x
			+ \int_{0}^{\ell}f_{x}^{2}\,\dd x,
		\end{align*}
		which is bounded by the Hardy inequality and the properties of $v_{\ell}$.\\
%		Hence,
%		\begin{align}\label{estimatethirdderivatives}
%			\int_{0}^{\ell}f_{xxx}^{2}\,\dd x
%			\leq C\biggl(\int_{0}^{\ell}\Bigl(f_{xxx}-\bigl(G''(v_{\ell})f\bigr)_{x}\Bigr)^{2}\,\dd x
%			+\int_{0}^{\ell}f_{x}^{2}\,\dd x\biggr).
%		\end{align}

	\step{Step 2.}
		Suppose to the contrary that there is a sequence $\ell_{n}\to\infty$ and a corresponding
		sequence of functions $f_{n}:[0,\ell_{n}]\to\R$ and a constant $C_{1}>1$ such that
		\begin{align}\label{propertiesoffn}
			\int_{0}^{\ell_{n}}f_{nx}^{2}\,\dd x=1,\quad\int_{0}^{\ell_{n}}f_{n}^{2}\,\dd x\leq C_{1}\quad
			f_{n}(0)=0,\quad\text{and}\quad
			\int_{0}^{\ell_{n}}(\operatorL_{n} f_{n})^{2}\,\dd x\to 0.
		\end{align}
		Here, we set
		\begin{align*}
			\operatorL_{n}f_{n}:=\bigl(-f_{nxx}+G''(v_{\ell_{n}}) f_{n}\bigr)_{x}
		\end{align*}
		As in step 1, we obtain the improved uniform bound
		\begin{align}\label{uniformboundfn}
			 \int_{0}^{\ell_{n}}\frac{f_{n}^{2}}{1+x^{2}}+f_{nx}^{2}+f_{nxx}^{2}+f_{nxxx}^{2}\,\dd x\lesssim 1.
		\end{align}

	\step{Step 3.}
		We claim that there is a subsequence and a limit function $f$ such that on compact sets we have that
		\begin{align}
			\begin{split}\label{propertiessubsequence}
			f_{n}\rightharpoonup f\text{ in } H^{3},\quad
			f_{n}\to f\text{ in } C^{2},\quad f(0)=0,\quad
			\text{and}\quad
			\operatorL f=0\quad\text{on}\quad\R_+,
			\end{split}
		\end{align}
		for $\operatorL f:=\bigl(-f_{xx}+G''(v_{\infty}) f\bigr)_{x}$.
		The weak convergence follows from the uniform bound \eqref{uniformboundfn} and the strong convergence is a consequence of the
		compact embedding $H^{3}\Subset C^{2}$. % for every bounded interval $I$.
		The strong convergence yields $f(0)=0$.
		For $\operatorL f=0$ we use \eqref{propertiesoffn} and weak lower semi-continuity.
		Hence $f\in C^2(\R_+)$ improves to $f\in C^{3}(\R_{+})$.
		A similar argument using the first and second item in \eqref{propertiesoffn} yields $f\in H^{1}(\R_{+})$. Hence, according to lemma \ref{kernellineardisspationhalfspace}, we conclude that
		$
			f=\alpha v_{x}
		$
		for some $\alpha\in\R$. The condition $f(0)=0$ then forces $f\equiv 0$.

		Note that by passing to a subsequence, we can additionally guarantee that \eqref{propertiessubsequence} holds with $f_{n}$ replaced by $\tilde f_{n}(x):=f_{n}(\ell_{n}-x)$.\\

	\step{Step 4.}
		We will now use the $L^{2}$ control on $\operatorL f$ and $f_{x}$ to contradict the first item in \eqref{propertiesoffn}.
		As a starting point we use the local $C^{2}$ convergence to zero to pass from $\operatorL f$ to a spatially constant operator in the sense that we show
		\begin{align}\label{convergencespatiallyconstant}
			\int_{0}^{\ell_{n}}\Bigl(\bigl(f_{nxx}-G''(1)f_{n}\bigr)_{x}\Bigr)^{2}\,\dd x\to 0.
		\end{align}
		According to the triangle inequality
		\begin{align*}
			\MoveEqLeft
			 \abs*{\left(\int_{0}^{\ell_{n}}\Bigl(\bigl(f_{nxx}-G''(v_{\ell_{n}})f_{n}\bigr)_{x}\Bigr)^{2}\,\dd x\right)^{\frac{1}{2}}
			-\left(\int_{0}^{\ell_{n}}\Bigl(\bigl(f_{nxx}-G''(1)f_{n}\bigr)_{x}\Bigr)^{2}\,\dd x\right)^{\frac{1}{2}}}^{2}\\
			&\leq \int_{0}^{\ell_{n}}\biggl(\Bigl(\bigl(G''(v_{\ell_{n}})-G''(1)\bigr)f_{n}\Bigr)_{x}\biggr)^{2}\,\dd x,
		\end{align*}
		it suffices to show that the right-hand side converges to zero.
		From the local $C^{2}$ convergence of of $f_{n}$ and $\tilde f_{n}$, we see that it is enough to show that, for any $\epsilon>0$, there exists an $X<\infty$ such that
		\begin{align*}
			 \limsup_{n\to\infty}\int_{X}^{\ell_{n}-X}\biggl(\Bigl(\bigl(G''(v_{\ell_{n}})-G''(1)\bigr)f_{n}\Bigr)_{x}\biggr)^{2}\,\dd x\leq\epsilon.
		\end{align*}
		This can be seen by using the uniform bound \eqref{uniformboundfn} and the fact that
		\begin{align*}
			\sup_{x\in [X,\ell_{n}-X]}\abs{G''(v_{\ell_{n}})-G''(1)}^{2}\qquad\text{and}\qquad
			\sup_{x\in [X,\ell_{n}-X]}\left(1+x^{2}\right)\abs{G'''(v_{\ell_{n}})v_{\ell_{n},x}}^{2}
		\end{align*}
		can be made small by choosing $X$ large.\\

	\step{Step 5.}
		We note that
		\begin{align*}
			\MoveEqLeft
			\int_{0}^{\ell_{n}}\Bigl(\bigl(f_{nxx}-G''(1)f_{n}\bigr)_{x}\Bigr)^{2}\,\dd x
			 =\int_{0}^{\ell_{n}}\left(f_{nxxx}^{2}-2G''(1)f_{nx}f_{nxxx}+G''(1)^{2}f_{nx}^{2}\right)\,\dd x\\
			 &=\int_{0}^{\ell_{n}}\left(f_{nxxx}^{2}+2G''(1)f_{nxx}^{2}+G''(1)^{2}f_{nx}^{2}\right)\,\dd x-\left[2G''(1)f_{nx}f_{nxx}\right]_{0}^{\ell_{n}},
		\end{align*}
		and hence
		\begin{align*}
			G''(1)^{2}\int_{0}^{\ell_{n}}f_{nx}^{2}\,\dd x\leq
			\int_{0}^{\ell_{n}}\Bigl(\bigl(f_{nxx}-G''(1)f_{n}\bigr)_{x}\Bigr)^{2}\,\dd x+\left[2G''(1)f_{nx}f_{nxx}\right]_{0}^{\ell_{n}}
			\to 0
		\end{align*}
		by \eqref{convergencespatiallyconstant} and local $C^{2}$ convergence of $f_{n}$, $\tilde f_{n}$ to $0$.
		This contradicts the first item in \eqref{propertiesoffn}.
\end{proof}
\begin{proof}[Proof of lemma \ref{smalldissipationuniformclose}]
	\step{Step 1.}
		Assume to the contrary that there are interval lengths $\ell_{n}\to\infty$ and functions $u_{n}\in H_{0}^{1}((0,\ell_{n}))$ such that
		\begin{align}\label{consequenceofassumptionsdissipation}
			\energy(u_{n})\leq C_{E},\quad
			\norm{u_{n}-v_{n}}_{L^{2}}\leq C_{1},\quad
			\dissipation(u_{n})\to 0,\;\text{ and }\;
			\lim_{n\to\infty}\sup_{[0,\ell_{n}]}\abs{u_{n}-v_{\ell_{n}}}\geq \epsilon>0.
		\end{align}
		According to the bound on the energy and lemma \ref{l:linf}, we have that
		\begin{align}\label{energyubounded}
			\sup_{[0,\ell_{n}]}\abs{u_{n}}\leq C\qquad\text{and}\qquad \int_{0}^{\ell_{n}}u_{nx}^{2}\,\dd x\leq C_{E}.
		\end{align}
		Additionally, we have
		\begin{align*}
			\MoveEqLeft
			\biggl(\int_{0}^{\ell_{n}}u_{nxxx}^{2}\,\dd x\biggr)^{\frac{1}{2}}
			\leq \biggl(\int_{0}^{\ell_{n}}\bigl(u_{nxx}-G'(u)\bigr)_{x}^{2}\,\dd x\biggr)^{\frac{1}{2}}+\biggl(\int_{0}^{\ell_{n}}\big(G''(u_{n})u_{nx})^{2}\,\dd x\biggr)^{\frac{1}{2}}\\
			&\leq \bigl(\dissipation(u_{n})\bigr)^{\frac{1}{2}}+\sup_{[-C,C]}\abs{G''}\bigl(\energy(u_{n})\bigr)^{\frac{1}{2}}\leq C^{\frac{1}{2}}+\sup_{[-C,C]}\abs{G''}C_{E}^{\frac{1}{2}}.
		\end{align*}
		By interpolation
		\begin{align*}
			 \sup_{[0,\ell_{n}]}\abs{u_{n}}+\int_{0}^{\ell_{n}}u_{nx}^{2}+u_{nxx}^{2}+u_{nxxx}^{2}\,\dd x\leq C,
		\end{align*}
		so that
%		so that for every $b\in(0,\ell)$ and $n$ large enough, we have
%		\begin{align*}
%			\norm{u_{n}}_{H^{3}((0,b))}\leq C(b).
%		\end{align*}
%		Thus,
		we find a subsequence such that
		\begin{align}\label{unlocalconvergencedissipation}
			u_{n}\rightharpoonup u\quad\text{in }H_{\mathrm{loc}}^{3}\qquad\text{and}\qquad
			u_{n}\rightarrow u\quad\text{in }C_{\mathrm{loc}}^{2}.
		\end{align}
%		The strong $C^{2}$ convergence yields
%		\begin{align*}
%			\energy_{b}(u)=\lim_{n\to\infty}\energy_{b}(u_{n})\leq \liminf_{n\to\infty}\energy(u_{n})\leq C_{E},
%		\end{align*}
%		and weak convergence combined with the weak lower semi-continuity of the $L^{2}$-norm gives
%		\begin{align*}
%			0=\dissipation_{b}(u)
%			\leq \liminf_{n\to\infty}\dissipation_{b}(u_{n})
%			\leq \liminf_{n\to\infty}\dissipation_{\ell_{n}}(u_{n})=0.
%		\end{align*}
%		Here, we denote
%		\begin{align*}
%			E_{b}(w)=E(w|_{(0,b)})\qquad\text{and}\qquad
%			D_{b}(w)=D(w|_{(0,b)})
%		\end{align*}
%		for $w:(0,\ell)\to\R$ and $0<b<\ell$.
		As in the proof of the linearized dissipation estimate, we obtain
		\begin{align}\label{uvL2close2}
			\norm{u-v_{\infty}}_{L^{2}((0,\infty))}\leq C_{1},\qquad
			\energy(u)\leq C_{E},\qquad\text{and}\qquad\dissipation(u)=0.
		\end{align}

	\step{Step 2.}
		We will show that $u$ is a equal to $v_{\infty}$.
		The last item in \eqref{uvL2close2} implies
%		\begin{align*}
%			\bigr(u_{xx}-G'(u)\bigl)_{x}=u_{xxx}-G''(u)u_{x}=0%\qquad\text{a.e. on}\quad (0,\ell).
%		\end{align*}
%		Consequently,
		\begin{align}\label{eqc1}
			u_{xx}-G'(u)=\lambda.
		\end{align}
%		By multiplying with $u_{x}$ we see
%		\begin{align*}
%			\biggl(\frac{u_{x}^{2}}{2}-G(u)-c_{1}u\biggr)_{x}=0
%		\end{align*}
%		and therefore
%		\begin{align}\label{eqc1c2}
%			\frac{u_{x}^{2}}{2}-G(u)-c_{1}u=c_{2}.
%		\end{align}
		From $E(u)\leq C_{E}$ and the first item in \eqref{uvL2close2}, we infer $u\to +1$ for $x\to\infty$.
		Using this information in \eqref{eqc1} yields
%		Using this information in \eqref{eqc1c2} yields
%		\begin{align*}
%			\lim_{x\to\infty}u_{x}^{2}(x)=2(c_{1}+c_{2}).
%		\end{align*}
%		As the energy is bounded, we must have
%		\begin{align}\label{c1andc2}
%			c_{1}+c_{2}=0.
%		\end{align}
%		Furthermore, $u\to +1$ for $x\to\infty$ in \eqref{eqc1} gives
		\begin{align*}
			\lim_{x\to\infty}u_{xx}(x)=\lambda.
		\end{align*}
		Boundedness of the energy of $u$ gives $\lambda=0$.
%		This means, if $c_{1}\not=0$ then $u_{x}$ and hence the energy are unbounded. Hence \eqref{energyubounded} and \eqref{c1andc2} imply
%		\begin{align*}
%			c_{1}=c_{2}=0.
%		\end{align*}
		Since the only bounded energy solutions to \eqref{eqc1} on $\R_{+}$ with $\lambda=0$ and $u(0)=0$ are $\pm v_{\infty}$,
		we get $u=v_{\infty}$ from the first item in \eqref{uvL2close2}.\\

	\step{Step 3.}
		We will use the second and fourth conditions in \eqref{consequenceofassumptionsdissipation} to arrive at a contradiction.
		According to step 1, $u_{n}$ is uniformly close to a hyperbolic tangent  on compact intervals around $0$ and $\ell_{n}$.
		By the second and fourth conditions in \eqref{consequenceofassumptionsdissipation} we know that both these hyperbolic tangent profiles must be positive, so that $u_{n}$ is also uniformly close to $v_{\ell_{n}}$ on these intervals.
		Passing to a subsequence if necessary, this means that the last condition in \eqref{consequenceofassumptionsdissipation} implies the existence of $x_{n}$ with
		\begin{align*}
			 x_{n}\to\infty,\qquad
			 \ell_{n}-x_{n}\to\infty,\qquad
			 \abs{u_{n}(x_{n})-1}\geq \tfrac{\epsilon}{2},\qquad\text{and}\qquad
			 u_{nx}(x_{n})=0.
		\end{align*}
		Repeating the argument of the first step for $\tilde u_{n}(x)=u_{n}(x_{n}+x)$
%		\begin{align*}
%			\tilde u_{n}(x)=u_{n}(x_{n}+x)\qquad\text{on}\qquad (0,\tilde \ell_{n})\quad\text{for}\quad\tilde\ell_{n}=\ell_{n}-x_{n},
%		\end{align*}
		we find $\tilde u$ such that
		\begin{align*}
			\tilde u_{n}\rightharpoonup \tilde u\quad\text{in }H_{\mathrm{loc}}^{3}\qquad\text{and}\qquad
			\tilde u_{n}\rightarrow \tilde u\quad\text{in }C_{\mathrm{loc}}^{2}
		\end{align*}
		as well as
		\begin{align}\label{conditions_tilde_u}
			\energy(\tilde u)\leq C_{E},\qquad D(\tilde u)=0,\qquad
			\abs{\tilde u(0)-1}\geq \tfrac{\epsilon}{2},\qquad\text{and}\qquad
			\tilde u_{x}(0)=0.
		\end{align}
		Additionally, the last item in \eqref{hypnonlindis} gives
		\begin{align}\label{tildeu1}
			\norm{\tilde u-1}_{L^{2}}\leq C_{1}.
		\end{align}
		As in the previous step, we deduce that $\tilde u$ satisfies \eqref{eqc1} with $\lambda=0$.
		Since the only bounded energy solutions to \eqref{eqc1} on $\R_{+}$ with $\lambda=0$ and $u_{x}(0)=0$ are $\pm 1$, the third item in \eqref{conditions_tilde_u} implies $\tilde u\equiv -1$.
		This contradicts \eqref{tildeu1}.
%		By multiplying with $u_{x}$ we see
%		\begin{align}\label{eqc1c2}
%			\frac{\tilde u_{x}^{2}}{2}-G(\tilde u)=\mu.
%		\end{align}
%		Again $\tilde u\to \pm 1$ for $x\to\infty$ gives
%		\begin{align*}
%			\lim_{x\to\infty}\tilde u_{x}^{2}(x)=2\mu
%		\end{align*}
%		so that boundedness of the energy of $\tilde u$ immediately gives $\mu=0$. Hence, the only possible critical points of $\tilde u$ are $\pm 1$ and by \eqref{conditions_tilde_u}
%		we have $\tilde u(0)=-1$. Therefore $\tilde u$ solves
%		\begin{align*}
%			\tilde u_{x}=\pm\sqrt{G(\tilde u)}\qquad\text{with}\qquad \tilde u(0)=-1\qquad\text{and}\qquad \tilde u_{x}(0)=0,
%		\end{align*}
%		which has the unique solution $\tilde u\equiv -1$.
%		Now the uniform convergence of $\tilde u_{n}\to -1$ on bounded intervals contradicts \eqref{hypnonlindis}.
\end{proof}

\subsection{Dissipation proofs from section \ref{ss:post}}\label{ss:disspf}
To show lemma \ref{l:diss1}, we will use comparison with energy minimizers on $\R$ and the following linearized energy gap estimate.
\begin{lemma}\label{l:oyh}
  For all $f\in H^1(\R)$  such that
\begin{align*}
 \int_{\R}
f\,v_{\infty x}\,\dd x=0, \quad\text{there holds}\quad \int_{\R} f^2+f_x^2\,\dd x\lesssim\int_{\R}
f_x^2+G''(v_\infty)f^2\,\dd x.
\end{align*}
\end{lemma}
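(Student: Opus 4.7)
The approach is to recognize lemma~\ref{l:oyh} as a spectral gap statement for the linearized operator $\mathcal{L} := -\partial_{xx} + G''(v_\infty)$ on $\R$, modulo its one-dimensional kernel spanned by $v_{\infty x}$. Differentiating the profile equation $-v_{\infty xx} + G'(v_\infty) = 0$ shows that $\mathcal{L} v_{\infty x} = 0$. Because $v_\infty$ is strictly monotone, $v_{\infty x}>0$ is a positive eigenfunction, which by Sturm--Liouville theory is the ground state of $\mathcal{L}$; since $G''(\pm 1)>0$, the essential spectrum of $\mathcal{L}$ is contained in $[\min G''(\pm 1),\infty)$, so below it $\mathcal{L}$ has at most finitely many eigenvalues, and $0$ is isolated. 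Hence a quantitative spectral gap should hold: there is $c_0>0$ such that
\begin{equation*}
\int_{\R} f_x^2 + G''(v_\infty)\,f^2\,\dd x \;\geq\; c_0 \int_{\R} f^2\,\dd x
\qquad\text{whenever}\qquad
\int_\R f\,v_{\infty x}\,\dd x=0.
\end{equation*}

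I would establish this spectral gap by a compactness--contradiction argument. Assume the inequality fails and take a sequence $f_n\in H^1(\R)$ with $\|f_n\|_{L^2}=1$, $\int f_n v_{\infty x}\,\dd x=0$, and $\int f_{nx}^2 + G''(v_\infty)f_n^2\,\dd x\to 0$. The key device preventing escape of mass to infinity is the pointwise bound $G''(v_\infty(x))\geq c>0$ for $|x|\geq R$, which follows from $v_\infty(x)\to\pm 1$ and $G''(\pm 1)>0$; therefore $\int_{|x|\geq R} f_n^2\,\dd x\to 0$ uniformly. Combined with $\int f_{nx}^2\,\dd x\to 0$ and the compact embedding $H^1([-R,R])\hookrightarrow L^2([-R,R])$, a diagonal/extraction argument produces strong $L^2(\R)$ convergence to a limit $f$ with $\|f\|_{L^2}=1$. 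Weak lower semicontinuity of $\int f_x^2 + G''(v_\infty)f^2\,\dd x$ forces $f_x=0$ where $G''(v_\infty)=0$ accounting is handled and in fact $f$ lies in the kernel of $\mathcal{L}$, hence $f=\alpha v_{\infty x}$; the orthogonality passes to the limit and gives $\alpha=0$, contradicting $\|f\|_{L^2}=1$.

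Once the spectral gap is in hand, the lemma follows in one line: the $L^2$ bound is immediate, and for the $\dot H^1$ part one writes
\begin{equation*}
\int_\R f_x^2\,\dd x \;=\; \int_\R \bigl(f_x^2 + G''(v_\infty)f^2\bigr)\,\dd x \;-\; \int_\R G''(v_\infty)f^2\,\dd x
\;\leq\; \Bigl(1+\tfrac{\|G''(v_\infty)\|_{L^\infty}}{c_0}\Bigr) \int_\R \bigl(f_x^2 + G''(v_\infty)f^2\bigr)\,\dd x,
\end{equation*}
using $\|G''(v_\infty)\|_{L^\infty}\lesssim 1$ from the boundedness of $v_\infty$ and the $C^2$ assumption on $G$.

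The main obstacle is the non-compactness of $\R$ in the contradiction argument: I must rule out both escape of mass to infinity (handled by the uniform positivity of $G''(v_\infty)$ outside a large ball) and the possibility that the weak $L^2$ limit is zero (handled by combining that uniform tail decay with compact embedding on each bounded interval to promote weak to strong $L^2$ convergence). Everything else is routine.
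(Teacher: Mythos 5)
Your strategy is the standard one and it matches the paper's, which simply delegates the $L^2$ spectral gap to Lemma~3.1 of \cite{OW} and the $L^2\to H^1$ upgrade to the bootstrap in the proof of Proposition~3.1 of \cite{OR}; your concluding one-line bootstrap reproduces the latter correctly. However, two intermediate claims in your compactness argument are not correct as stated, and both stem from not accounting for the fact that $G''(v_\infty)$ changes sign (e.g.\ $G''(0)=-1$ for the canonical potential). First, $Q(f_n):=\int f_{nx}^2+G''(v_\infty)f_n^2\,\dd x\to 0$ does \emph{not} imply $\int f_{nx}^2\,\dd x\to 0$: the contribution $\int G''(v_\infty)f_n^2\,\dd x$ can be negative and of order one, so all one gets is that $\int f_{nx}^2\,\dd x$ is bounded (which is, fortunately, all the compact embedding needs). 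Second, the tail smallness $\int_{|x|\geq R}f_n^2\,\dd x\to 0$ does not follow directly from $G''(v_\infty)\geq c>0$ outside $[-R,R]$ together with $Q(f_n)\to 0$: isolating the tail gives only
\begin{align*}
c\int_{|x|\geq R}f_n^2\,\dd x\leq Q(f_n)+\norm*{\bigl(G''(v_\infty)\bigr)^-}_{L^\infty}\int_{|x|<R}f_n^2\,\dd x,
\end{align*}
and the last term is bounded by $1$, not small, so the "therefore" does not close.

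The fix is standard and does not change your plan. After extracting $f_n\rightharpoonup f$ weakly in $H^1$ (hence strongly in $L^2_{\mathrm{loc}}$), write $G''(v_\infty)=\mu+W$ with $\mu:=G''(\pm 1)>0$ and $W$ decaying to zero at $\pm\infty$. Then $\int W f_n^2\,\dd x\to\int W f^2\,\dd x$ (split at a large $R$; the tail of $W$ is uniformly small and the compact part converges by local strong convergence), so
\begin{align*}
0=\lim Q(f_n)\geq \int f_x^2\,\dd x+\int W f^2\,\dd x+\mu\cdot 1
=\Bigl(\int f_x^2+G''(v_\infty)f^2\,\dd x\Bigr)+\mu\bigl(1-\norm{f}_{L^2}^2\bigr),
\end{align*}
using weak lower semicontinuity of $\int f_{nx}^2\,\dd x$. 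Since $v_{\infty x}>0$ is the ground state, $Q\geq 0$ on all of $H^1(\R)$, and $\norm{f}_{L^2}\leq 1$ by weak lower semicontinuity; hence both summands on the right vanish, i.e.\ $Q(f)=0$ and $\norm{f}_{L^2}=1$. The latter upgrades weak to strong $L^2(\R)$ convergence (so the tails do vanish, but as a \emph{consequence}), and the former forces $f\in\ker\mathcal{L}=\mathrm{span}\{v_{\infty x}\}$; orthogonality then gives $f=0$, a contradiction. With that repair your argument is sound and is essentially the same proof as in the cited references.
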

This lemma is easy to prove by combining Lemma 3.1 in \cite{OW} with a bootstrapping argument as in the proof of proposition 3.1 in \cite{OR}.
The proof of lemma \ref{l:diss1} now follows by adapting the proof of the differential inequality for $D$ from \cite{OW}.
\begin{proof}[Proof of lemma \ref{l:diss1}.]
A straightforward calculation and periodic boundary conditions give
\begin{align*}
  \frac{\dd}{\dd t}\frac{1}{2}D=-\int h_x^2+G''(u)h^2\,\dd x,
\end{align*}
where $h:=(u_{xx}-G'(u))_{xx}$, so that it suffices to show
\begin{align*}
  \int h_x^2+G''(u)h^2\,\dd x\gtrsim -D.
\end{align*}
The idea is to use the linearized energy gap estimate on a large interval around zeros of $v$. It suffices to give the details around one zero; without loss of generality consider $x_i=0$ and recall that the  closest neighboring element of $c$ is at least $\ell\geq\ell_1$ away.

We introduce a partition of unity $\eta_-$, $\eta $, $\eta_+$ such that
\begin{align}
  &\eta \equiv 1\quad\text{on}\quad \left[\frac{x_{i-1}}{4},\frac{x_{i+1}}{4}\right],\label{eta.1}\\
  &\sppt(\eta )\Subset\left(\frac{3x_{i-1}}{4},\frac{3x_{i+1}}{4}\right),\notag\\
  &\abs{\eta_{xx}}\leq \frac{2}{\ell^2},\label{eta.3}
\end{align}
while setting
$
  \eta_-=(1-\eta )\textbf{1}_{x\leq 0},\;\eta_+=(1-\eta )\textbf{1}_{x\geq 0}.
$
We claim that
\begin{align}
  %\int \eta^2 h_x^2\,dx&\geq \int\big((\eta  h)_x\big)^2\,dx-\frac{2}{\ell^2}\int \eta h^2\,dx,\label{sec.1}\\
  \int h_x^2\,\dd x&\geq \int \bigl((\eta_- h)_x\bigr)^2+ \bigl((\eta  h)_x\bigr)^2+ \bigl((\eta_+ h)_x\bigr)^2\,\dd x\notag\\
  &\qquad-\frac{2}{\ell^2}\int (\eta_-+\eta+\eta_+)h^2\,\dd x,\label{sec.1}\\
  \int G''(u)h^2\,\dd x&\geq\int G''(u)\left((\eta_- h)^2+(\eta  h)^2+(\eta_+ h)^2\right)\,\dd x.\label{sec.2}
\end{align}
Indeed, the inequality of the integrands in both cases is clear whenever two of the partition functions vanish and it suffices to check
the ``overlap regions'' $I_-:=[\frac{3x_{i-1}}{4},\frac{x_{i-1}}{4}]$ and $I_+:=[\frac{x_{i+1}}{4},\frac{3x_{i+1}}{4}]$. We choose
$\ell_1$ sufficiently large and $\eps$ sufficiently small so that
\begin{align}
G''\bigl(u(x)\bigr)\geq \frac{1}{ C_1}>0\quad\text{for }x\in I_-\cup I_+.\label{L1}
\end{align}
From this choice and
\begin{align}
\eta_-^2+\eta^2+\eta_+^2\leq 1,\label{eta123}
\end{align}
we deduce \eqref{sec.2}. For \eqref{sec.1}, we use \eqref{eta123}, \eqref{eta.1}, and \eqref{eta.3} together with the identity
\begin{align*}
  \int \big((\eta h)_x\big)^2\,\dd x&=\int \eta^2h_x^2+2\eta\eta_x h\,h_x +\eta_x^2 h^2\,\dd x=\int \eta^2h_x^2-\eta\eta_{xx}h^2\,\dd x
\end{align*}
for $\eta_-,\,\eta$, and $\eta_+$.

We have hence localized the estimates. It suffices to show
\begin{align}
  \int \Big((\eta h)_x\Big)^2 +G''(u)(\eta h)^2\,\dd x-\frac{2}{\ell^2}\int \eta h^2\,\dd x\gtrsim - D.\label{suff.1.f}
\end{align}
On the support of $\eta$, we project $\eta h$ onto $v_\infty$ (centered at zero) via
\begin{align}
  \eta h= h_0 +\alpha v_{\infty x},\quad\text{with }\alpha:=\frac{\int \eta h v_{\infty x}\,\dd x}{\int v_{\infty x}^2\,\dd x}.\label{deco}
\end{align}
Via integration by parts and the decay of $v_{\infty x},\,v_{\infty xx}$, we observe that
\begin{align}
  \alpha^2\lesssim D.\label{alphad}
\end{align}
Substituting the decomposition for $\eta h$ and using the equation for $v_\infty$ gives
\begin{align*}
  \lefteqn{\int \Big((\eta h)_x\Big)^2 +G''(u)(\eta h)^2\,\dd x}\\
  &=\int \Big((\eta h)_x\Big)^2 +G''(v_\infty)(\eta h)^2\,\dd x+\int\Big( G''(u)-G''(v_\infty)\Big)(\eta h)^2\,\dd x\\
  &=\int h_{0x}^2 +G''(v_\infty)h_0^2\,\dd x+\int\Big( G''(u)-G''(v_\infty)\Big)(\eta h)^2\,\dd x.
\end{align*}
On the one hand, $h_0\in H^1(\R)$ and is orthogonal to $v_{\infty x}$, so that we can use the positivity of the energy gap from lemma \ref{l:oyh}.
On the other hand, from the $L^\infty$ bound on $u-v$ and the convergence of $v$ to $v_\infty$ with $\ell$, we deduce
\begin{align}
  \sup_{\left[\frac{3x_{i-1}}{4},\frac{3x_{i+1}}{4}\right]}\abs{G''(u)-G''(v_\infty)}\lesssim \eps+o(1)_{\ell\to\infty}\leq 2\eps\label{linfuv}
\end{align}
for $\ell_1$ large enough. Applying these facts yields
\begin{align}
 	\int \Big((\eta h)_x\Big)^2 +G''(u)(\eta h)^2\,\dd x
	\gtrsim \int h_0^2+h_{0 x}^2\,\dd x -C\eps \int (\eta h)^2\,\dd x.\label{suff.1.f2}
\end{align}
According to \eqref{suff.1.f2}, $\abs{\eta}\leq 1$, and Young's inequality, it suffices for \eqref{suff.1.f} to show
\begin{align}
  \int \eta h^2\,\dd x\lesssim D^{1/2}\left(\int h_{0 x}^2\,\dd x\right)^{1/2}+D.\label{suff.g}
\end{align}
Turning again to the decomposition \eqref{deco} and the definition of $h$, we find
\begin{align*}
  \int \eta h^2\,\dd x=\int g_{xx}\eta h\,\dd x=-\int g_x (\eta h)_x\,\dd x=-\int g_x (h_{0 x}+\alpha v_{\infty xx})\,\dd x,
\end{align*}
so that \eqref{suff.g} follows from
the Cauchy-Schwarz inequality and \eqref{alphad}.
\end{proof}

We now show that a bound on $\eb$ induces a bound on $D$.
\begin{proof}[Proof of lemma \ref{l:diss3}]
  For the statement to be nontrivial, we assume that $t\geq s+1$. According to \eqref{L1b.5} and lemma \ref{l:eedraw}, we may assume $\ell_1$ and $\gamma$ to be such that \eqref{d1.down} holds. Let
  \begin{align*}
    t_1:={\rm argmin}_{[s,s+1]}D,\qquad t_2:={\rm argmax}_{[s+1,t]}D.
  \end{align*}
  Then on the one hand, the minimum on $[s,s+1]$ is bounded above via
   \begin{align}
   D(t_1)\leq \int_{s}^{s+1} D(\tau)\,\dd\tau\overset{\eqref{L2b.1}}{=} \int_{s}^{s+1}-\frac{\dd}{\dd t}\eb(\tau)\,\dd\tau\leq \abs*{\eb(s)}+\abs*{\eb(s+1)}\leq 2\gamma.\label{d1.1}
  \end{align}
  On the other hand, the maximum on $[s+1,t]$ is bounded via
  \begin{align*}
    \max_{[s+1,t]}D= D(t_1)+\int_{t_1}^{t_2}\frac{\dd}{\dd\tau}D(\tau)\,\dd t\overset{\eqref{d1.1},\eqref{d1.down}}
    \lesssim \gamma+\int_{t_1}^{t_2} D(\tau)\,\dd\tau\overset{\eqref{L2b.1}}{\lesssim}\gamma.
  \end{align*}
%   Similarly, by \eqref{t.2b} and the definition of $s_{2}$, the dissipation on $[s_2,T]$ is bounded by $\delta$.
\end{proof}
\begin{proof}[Proof of lemma \ref{l:diss2}]
As in the proof of \cite[equation (1.19)]{OW}, it suffices to show that $\norm{\xi}_{L^\infty(\torus)}$ is small, where $\xi:=\frac{u_{x}^{2}}{2}- G(u)$ is the so-called discrepancy. Indeed, on a neighborhood of any zero $x_i$, $u$ satisfies
\begin{align}
  u_x=\pm \sqrt{2(G(u)+\xi)}\quad\text{with}\quad u(x_i)=0.\label{gode}
\end{align}
Smallness of the discrepancy hence forces $u$ to take on values near $\pm 1$ in between any pair of zeros, and smallness of the energy gap and the observation of Modica and Mortola then rules out more than $N$ zeros.

Fix $y\in\torus$ and let $\eta$ be a cut-off function such that $\eta\equiv 1$ on $(y-L,y+L)$ and $\sppt(\eta)\Subset (y-2L,y+2L)$.
 Letting $g:=u_{xx}-G'(u)$ we observe
\begin{align}
	\label{linftyboundg}
	\sup_{[y-L,y+L]}|g|\leq \sup\abs{\eta g}\lesssim L^{-\frac{1}{2}}+(LD)^{\frac{1}{2}}\lesssim D^{\frac{1}{4}}+\Lambda^{-\frac{1}{2}},
\end{align}
where in the last inequality we have optimized in $L$ subject to $L\lesssim\Lambda$.
Since $y$ was arbitrary, we obtain \eqref{linftyboundg} for $\abs{g}$ on all of $\torus$.
Again fixing $y\in\torus$ and using this estimate together with $\xi_{x}=gu_{x}$ and the energy bound gives
\begin{align*}
	\sup_{[y-L,y+L]}\abs{\xi}\leq \abs*{\frac{1}{L}\int_{[y-L,y+L]}\xi\,\dd x}+\int_{[y-L,y+L]}\abs{\xi_{x}}\,\dd x
	\lesssim L^{-1}+ \left(D^{\frac{1}{4}}+\Lambda^{-\frac{1}{2}}\right)L^{\frac{1}{2}}.
\end{align*}
Optimizing in the interval size and recalling that $y$ was arbitrary, we obtain
\begin{align*}
	\norm{\xi}_{L^\infty(\torus)}\lesssim D^{\frac{1}{6}}+\Lambda^{-\frac{1}{3}}.
\end{align*}
\end{proof}

\section{Metastability proofs}\label{S:meta}

In the first proposition, we establish metastability under weak norm conditions without assuming integrability of $t\mapsto E(v(t))$.
\begin{proof}[Proof of proposition \ref{prop:orweak2}]
\step{Step 1: Preliminaries}.
	We introduce the two quantities
	\begin{align*}
	\mathcal{E}(t):=E\bigl(u(t)\bigr)-E\bigl(v(t)\bigr),\qquad \mathcal{E}_s(t):=E\bigl(u(t)\bigr)-E\bigl(v(s)\bigr).
	\end{align*}
	Notice that $\mathcal{E}(t)\geq 0$. Although $\mathcal{E}_s$ is not a positive quantity, we will control the negative part via
	\begin{align}
	 \mathcal{E}_s(t)\geq -\abs*{E\bigl(v(t)\bigr)-E\bigl(v(s)\bigr)}.\label{u6}
	\end{align}
	Also notice that from the energy $E$, $\mathcal{E}_s$ inherits continuity as well as monotonicity:
	\begin{align}
	 \mathcal{E}_s(t_{2})\leq \mathcal{E}_s(t_{1})\qquad\text{for}\quad 0\leq t_{1}\leq t_{2}.\label{u1.1}
	\end{align}
Finally, as in the proposition we define
\begin{align*}
  e_0:=\mathcal{E}(0)=\mathcal{E}_0(0) \quad\text{and note that }\quad e_0\geq 0.
\end{align*}

We want to use \eqref{eed2.2} and integration in time to develop an integral equation that gives the decay of $\mathcal{E}_0$ (up to error terms). Our task in this step is to control changes along the slow manifold as measured in terms of $\mathcal{E}_0$. To this end, we use the gradient flow inequality with $w\equiv 1$ and $0\leq s<t<T$ as in \cite{OR} to bound
\begin{align}
 \norm{u(t)-u(s)}_1^{2}&\leq (t-s)\Big(E\bigl(u(s)\bigr)-E\bigl(u(t)\bigr)\Big)\label{alsou1}\\
 &\leq (t-s) \Big( E\bigl(u(s)\bigr)-E\bigl(v(s)\bigr)+E\bigl(v(s)\bigr)-E\bigl(v(t)\bigr)+E\bigl(v(t)\bigr)-E\bigl(u(t)\bigr)\Big)\notag\\
 &\leq (t-s) \Big( \mathcal{E}(s)+\abs*{E\bigl(v(s)\bigr)-E\bigl(v(t)\bigr)}\Big),\label{u1}
\end{align}
where we have dropped the negative term.
With the triangle inequality, this gives control on changes along the slow manifold as
\begin{eqnarray}
\lefteqn{  \norm{v(t)-v(s)}_0}\notag\\
 &\leq&  \norm{v(t)-u(t)}_0 +\norm{u(t)-u(s)}_0+\norm {u(s)-v(s)}_0 \label{u.tri}\\
 &\overset{\eqref{eed2.2},\eqref{u3.4},\eqref{u1}}\leq&  \sqrt{2\mathcal{E}(t)}+\sqrt{2\mathcal{E}(s)}+\biggl((t-s)\left( \mathcal{E}(s)+\abs*{E\bigl(v(t)\bigr)-E\bigl(v(s)\bigr)}\right)\biggr)^{\frac{1}{2}}. \label{u.vchg}
\end{eqnarray}
Plugging into the Lipschitz condition \eqref{lipschitz.2} gives
\begin{eqnarray}
\lefteqn{ \abs*{E\bigl(v(t)\bigr)-E\bigl(v(s)\bigr)}}\notag\\
 &\overset{\eqref{eed2.2},\eqref{u.vchg}}\leq& \delta\left(  \sqrt{2\mathcal{E}(t)}+\sqrt{2\mathcal{E}(s)}+\biggl((t-s)\left( \mathcal{E}(s)+\abs*{E\bigl(v(t)\bigr)-E\bigl(v(s)\bigr)}\right)\biggr)^{\frac{1}{2}}\right).\label{u2}
\end{eqnarray}
From the positivity of $\mathcal{E}(\cdot)$, we observe
\begin{align*}
  \mathcal{E}(t)\leq \mathcal{E}_s^+(t)+\abs*{E\bigl(v(t)\bigr)-E\bigl(v(s)\bigr)},
\end{align*}
where we have denoted the positive part by $f^+:=\max\{f,0\}$. Using this and the elementary inequality $\sqrt{A+B}\leq \sqrt{A}+\sqrt{B}$ for $A,\,B\geq 0$, we obtain from \eqref{u2} the bound
\begin{align}
  \lefteqn{ \abs*{E\bigl(v(t)\bigr)-E\bigl(v(s)\bigr)}}\notag\\
  &\leq \delta\Biggl(\sqrt{2\mathcal{E}_s^+(t)}+\sqrt{2\abs*{E\bigl(v(t)\bigr)-E\bigl(v(s)\bigr)}}+\sqrt{2\mathcal{E}(s)}+
  \sqrt{t-s}\left(\sqrt{\mathcal{E}(s)}+\sqrt{\abs*{E\bigl(v(t)\bigr)-E\bigl(v(s)\bigr)}}\right)\Biggr).\notag
\end{align}
Using $\delta(t-s)\leq\delta t\leq 1$, $\delta\leq 1$ and Young's inequality, we deduce
\begin{align}
\abs*{E\bigl(v(t)\bigr)-E\bigl(v(s)\bigr)}
\leq 2\delta\sqrt{2\mathcal{E}_s^+(t)}+ C\left(\sqrt{\delta\mathcal{E}(s)}+\delta\right).\label{u3.b}
\end{align}

\step{Step 2: Rough bound on energy decay.}
On the one hand, we derive from \eqref{u6} and \eqref{u3.b} with $s=0$ a bound on negative values of $\mathcal{E}_0$:
\begin{align}
 - \mathcal{E}_0(t)\lesssim \sqrt{\delta e_0}+\delta.\label{u1.4}
\end{align}
On the other hand, we now derive our initial integral inequality. Using \eqref{eed2.2} in the form
\begin{align}
  2\mathcal{E}_0(t)+\frac{\dd}{\dd t}E\bigl(u(t)\bigr)\leq 2\abs*{E\bigl(v(0)\bigr)-E\bigl(v(t)\bigr)}\label{original}
\end{align}
and applying \eqref{u3.b} with $s=0$, we obtain after an integration over $(t_1,t_2)$  the inequality
\begin{eqnarray}
  \lefteqn{\mathcal{E}_0(t_2)-\mathcal{E}_0(t_1)+(2-\delta)\int_{t_1}^{t_2} \mathcal{E}_0(\tau)\,\dd\tau}\notag\\
&\lesssim& \left(\sqrt{\delta e_0}+\delta\right)(t_2-t_1)-\delta\int_{t_1}^{t_2} \mathcal{E}_0(\tau)\textbf{1}_{\{\mathcal{E}_0(\tau)<0\}}\,\dd\tau\notag\\
&\overset{\eqref{u1.4}}\lesssim &\left(\sqrt{\delta e_0}+\delta\right)(t_2-t_1).\label{u1.2}
\end{eqnarray}

This is the differential inequality for the gap $e_0$ from which we will deduce exponential in time decay (up to error terms).
Indeed, let
\[F(t):=\int_{t-1}^t\mathcal{E}_0(\tau)\,\dd\tau.\]
According to \eqref{u1.1}, $F$ satisfies the initial condition
\[F(1)\leq e_0.\]
On the other hand, recalling that $\delta$ is small enough so that $2-\delta\geq 1$, we have from \eqref{u1.2} that
\begin{align*}
 F'(t)+ F(t)\lesssim \sqrt{\delta e_0}+\delta.
\end{align*}
Integrating this differential inequality yields
\begin{align*}
F(t)= \int_{t-1}^t\mathcal{E}_0(\tau)\,\dd\tau\lesssim \exp(- t)e_0+\sqrt{\delta e_0}+\delta.
\end{align*}
Recalling the monotonicity \eqref{u1.1}, we deduce
\begin{align*}
  \mathcal{E}_0(t)\lesssim \exp(-t)e_0+\sqrt{\delta e_0}+\delta.
\end{align*}
and hence, in light of the lower bound \eqref{u1.4}, we have arrived at
\begin{align}
\abs{ \mathcal{E}_0(t)}\lesssim \exp(-t)e_0+\sqrt{\delta e_0}+\delta.\label{u2.2}
\end{align}
%In light of the lower bound \eqref{u1.4}, we obtain moreover
%\begin{align}
%\left| \int_{t-1}^t\mathcal{E}_0(\tau)\,d\tau\right|\lesssim e^{- t}e_0+\sqrt{\delta e_0}+\delta.\label{u1.3}
%\end{align}
%Recalling the monotonicity \eqref{u1.1} and considering \eqref{u1.3} on $[t-1,t]$ and $[t,t+1]$ in turn, we deduce
%\begin{align}
%\abs{ \mathcal{E}_0(t)}\lesssim e^{- t}e_0+\sqrt{\delta e_0}+\delta.\label{u2.2}
%\end{align}

\step{Step 3: The initial layer and better energy decay.}
The goal of this step is to identify $T_1\geq 0$ such that \eqref{13.s1} and \eqref{13.s2} hold.
Along the way we obtain an improved decay rate for the initial energy gap; cf. \eqref{u.u.dec}, below.

We separate into cases.
\uline{Case 1}: If $e_0\leq \delta^2$, then $e_0\leq \delta e_0^{\frac{1}{2}}$ and we set $T_1=0$.

\uline{Case 2}:
If $e_0> \delta^2,$ we define the initial relaxation time via
\begin{align*}
  T_1:=\ln\left( \frac{e_0^{\frac{1}{2}}}{\delta}\right).
  %\inf\{t>0\colon \exp(-t)e_0\leq {e_0^{1/2}}\delta\}.
\end{align*}
By assumption on $\delta$, there holds $T_1\leq \delta^{-1}$. On $[0,t]$ for $t\leq T_1$ we obtain from the gradient flow inequality with weight $w(s):=\exp(\frac{s}{4})$:
\begin{align*}
  \norm{u(t)-u(0)}_1^2&\lesssim E\bigl(u(0)\bigr)-\exp\left(\tfrac{t}{4}\right)E\bigl(u(t)\bigr)+\int_0^t\exp\left(\tfrac{s}{4}\right)E\bigl(u(s)\bigr)\,\dd s.
\end{align*}
We rewrite this in terms of $\mathcal{E}_0(\cdot)$ and $\mathcal{E}(\cdot)$ as:
\begin{align}
 \norm{u(t)-u(0)}_1^2&\lesssim e_0+E\bigl(v(0)\bigr)-\exp\left(\tfrac{t}{4}\right)\mathcal{E}(t)-\exp\left(\tfrac{t}{4}\right)E\bigl(v(t)\bigr)\notag\\
 &\quad +\int_0^t\exp\left(\tfrac{s}{4}\right)\mathcal{E}_0(s)\,\dd s+\int_0^t\exp\left(\tfrac{s}{4}\right)E\bigl(v(0)\bigr)\,\dd s\notag\\
 &\leq e_0+E\bigl(v(0)\bigr)-\exp\left(\tfrac{t}{4}\right)E\bigl(v(t)\bigr)\notag\\
 &\quad +\int_0^t\exp\left(\tfrac{s}{4}\right)\mathcal{E}_0(s)\,\dd s+\int_0^t\exp\left(\tfrac{s}{4}\right)E\bigl(v(0)\bigr)\,\dd s.\label{u2.1}
\end{align}
Notice that by redefining the energy (at the expense of giving up positivity), we may assume $E(v(0))=0$, and that the Lipschitz condition then gives
$
  \abs*{E\bigl(v(t)\bigr)}\leq \delta \norm{v(t)-v(0)}_0.
$
Using these facts in \eqref{u2.1}, we observe
\begin{align*}
  \norm{u(t)-u(0)}_1^2&\lesssim e_0+\delta\exp\left(\tfrac{t}{4}\right)\norm{v(t)-v(0)}_0+\int_0^t\exp\left(\tfrac{s}{4}\right)\mathcal{E}_0(s)\,\dd s.
\end{align*}
Now we use \eqref{u2.2}, $t\leq T_1$, and $\delta\lesssim {e_0^{\frac{1}{2}}}$ to estimate
\begin{align}
  \norm{u(t)-u(0)}_1^2&\lesssim {e_0}+e_0^{\frac{1}{2}}+\delta\exp\left(\tfrac{T_1}{4}\right)\norm{v(t)-v(0)}_0.\label{u2.3}
\end{align}
Using the triangle inequality as in \eqref{u.tri} together with this estimate, we find
\begin{eqnarray*}
  \norm{v(t)-v(0)}_0&\lesssim& \sqrt{\mathcal{E}(t)}+{e_0^{\frac{1}{2}}}+e_0^{\frac{1}{4}}+\biggl(\delta\exp\left(\tfrac{T_1}{4}\right)\norm{v(t)-v(0)}_0\biggr)^{\frac{1}{2}}\\
  &\lesssim& \sqrt{\abs{\mathcal{E}_0(t)}}+{e_0^{\frac{1}{2}}}+e_0^{\frac{1}{4}}+\biggl(\delta\exp\left(\tfrac{T_1}{4}\right)\norm{v(t)-v(0)}_0\biggr)^{\frac{1}{2}}\\
  &\overset{\eqref{u2.2}}\lesssim&{e_0^{\frac{1}{2}}}+e_0^{\frac{1}{4}}+\biggl(\delta\exp\left(\tfrac{T_1}{4}\right)\norm{v(t)-v(0)}_0\biggr)^{\frac{1}{2}}.
\end{eqnarray*}
From Young's inequality, we deduce
\begin{align}
  \norm{v(t)-v(0)}_0\lesssim {e_0^{\frac{1}{2}}}+e_0^{\frac{1}{4}}+\delta\exp\left(\tfrac{T_1}{4}\right)\lesssim {e_0^{\frac{1}{2}}}+e_{0}^{\frac{1}{4}}.\label{u5.1}
\end{align}
Plugging back into \eqref{u2.3} yields in addition
\begin{align*}
   \norm{u(t)-u(0)}_1^2\lesssim e_0+e_0^{\frac{1}{2}},
\end{align*}
which completes the proof of \eqref{13.s2}. We remark that \eqref{u5.1} leads to a better energy decay rate on $[0,T_1]$. Indeed, recalling \eqref{original} and using \eqref{lipschitz.2} and \eqref{u5.1}, we obtain the improved integral inequality
\begin{align}
  2\int_s^t\mathcal{E}_0(\tau)\,\dd\tau+\mathcal{E}_0(t)-\mathcal{E}_0(s)\lesssim \delta\left({e_0^{\frac{1}{2}}}+e_0^{\frac{1}{4}}\right)(t-s).\notag
\end{align}
Proceeding as in step 2 returns
%\begin{align}
%\abs{ \mathcal{E}_0(t)}\lesssim e^{- 2t}e_0+\delta{e_0^{1/2}},\notag
%\end{align}
%and hence, considering both case 1 and case 2, we conclude
\begin{align}
\abs{ \mathcal{E}_0(t)}\lesssim \exp(-2t)e_0+\delta\left({e_0^{\frac{1}{2}}}+e_0^{\frac{1}{4}}\right)\quad\text{for all }t\in[0,T_1].\label{u.u.dec}
\end{align}
The bound on $\mathcal{E}(t)$ follows from this estimate, the Lipschitz condition \eqref{lipschitz.2}, and \eqref{u5.1}.\\

\step{Step 4: Order one changes for long times.}
Now we consider the evolution for $t\in [T_1,1/\delta]$. Using the estimates from step 1 with $s=T_1$ as in step 2 gives
\begin{align}
\abs{  \mathcal{E}_{T_1}(t)}\lesssim  \delta\left({e_0^{\frac{1}{2}}}+e_0^{\frac{1}{4}}\right)\qquad\text{ for all $T_1\leq t\leq \delta^{-1}$},\label{u3.1}
\end{align}
and hence, according to \eqref{lipschitz.2} and the previous step, also
\begin{align}
 \abs{ \mathcal{E}_0(t)}\lesssim \delta\left({e_0^{\frac{1}{2}}}+e_0^{\frac{1}{4}}\right)\qquad\text{ for all $T_1\leq t\leq \delta^{-1}$}.\label{u.thishere}
\end{align}
The combination of \eqref{u.u.dec} and \eqref{u.thishere} completes the proof of \eqref{u.first}.
From here it also follows that
\begin{eqnarray}
\frac{1}{2}\norm{u(t)-v(t)}_0^2\leq  \mathcal{E}(t)\lesssim \delta\left({e_0^{\frac{1}{2}}}+e_0^{\frac{1}{4}}\right)+\delta\norm{v(t)-v(T_1)}_0
\label{u3.2}
\end{eqnarray}
for all $t\geq T_1$. Proceeding similarly to \eqref{alsou1}, we deduce
\begin{eqnarray}
 \norm{u(t)-u(T_1)}_1^2&\lesssim& \bigl(\abs{\mathcal{E}_{T_1}(T_1)}+\abs{\mathcal{E}_{T_1}(t)}\bigr)(t-T_1)\notag\\
 &\overset{\eqref{u3.1}}\lesssim& \delta\left({e_0^{\frac{1}{2}}}+e_0^{\frac{1}{4}}\right)(t-T_1).\label{u3.3}
 \end{eqnarray}
This shows \eqref{diffu}.
Using the triangle inequality as in \eqref{u.tri} (with $s=T_1$) together with \eqref{eed2.2} and this estimate yields
\begin{eqnarray*}
\norm{v(t)-v(T_1)}_0^2 &\lesssim&  \mathcal{E}(t)+\mathcal{E}(T_1)+\delta\left({e_0^{\frac{1}{2}}}+e_0^{\frac{1}{4}}\right)(t-T_1)\notag\\
 &\overset{\eqref{u3.1}}\lesssim& \delta\left({e_0^{\frac{1}{2}}}+e_0^{\frac{1}{4}}\right)(1+t-T_1)+\delta\norm{v(t)-v(T_1)}_0 .
\end{eqnarray*}
From Young's inequality and $\delta\lesssim 1$ we obtain \eqref{diffv}.
\end{proof}

In the second proposition, we verify that the abstract result from \cite{OR} holds true if one considers a weaker norm. We indicate the (small) changes in the original proof for the convenience of the reader.
\begin{proof}[Proof of proposition \ref{prop:orweak}]
	The proof of lemma 2.1 in \cite{OR} still works in the modified setting.
	To see this, we notice that according to \eqref{lipschitz.2} it is enough to estimate the weak norm via
	\begin{align*}
		\norm{v(s)-v(t)}_{0}\overset{\eqref{eed2.2}}\leq \sqrt{2e(s)}+\sqrt{2e(t)}+\norm{u(t)-u(s)}_{0}
	\end{align*}
	in order to control the energy gap on the slow manifold $\mathcal{N}$. The last summand can be estimated in the stronger norm $\norm{\cdot}_{1}$ as in \cite[(2.6)]{OR} by
	the gradient flow inequality \cite[(1.8)]{OR}. After this, the proof makes use only of clever but elementary calculations and does not employ any of the assumptions.

	For the first part, i.e. $t\geq t_{1}$, in the proof of lemma 2.2 in \cite{OR}, we use \cite[(2.5) and (2.6)]{OR} to estimate
	\begin{align*}
		\MoveEqLeft
		\norm{u(t)-u(t_{1})}_{1}^{2}\leq (t-t_{1})\left(e(t_{1})+\delta\Big(\sqrt{2e(t_{1})}+\sqrt{2e(t)}+\norm{u(t)-u(t_{1})}_{0}\Big)\right)\\
		&\overset{\mbox{\scriptsize\cite[(2.20)]{OR}}}\lesssim (t-t_{1})\delta^{2}+(t-t_{1})\delta\norm{u(t)-u(t_{1})}_{0}.
	\end{align*}
	Young's inequality gives
	\begin{align*}
			\norm{u(t)-u(t_{1})}_{1}\lesssim \delta(1+t-t_{1}).
	\end{align*}
	For the second part, everything carries through as in the original proof if one measures the difference between $u(t)$ and $u(0)$ in the strong
	norm associated to the gradient flow and the remaining quantities in the weak norm.
\end{proof}

\section*{Acknowledgements}
It is a pleasure to acknowledge valuable input from Felix Otto. In addition we gratefully acknowledge the hospitality of the Max Planck Institute for Mathematics in the Sciences and the Institut des Hautes \'Etudes Scientifiques, as well as useful discussions with Lia Bronsard and Wadim Gerner. S.\ Scholtes was
partially supported by DFG Grant WE 5760/1-1.

%
%\bibliography{bib}{}
%\bibliographystyle{amsalpha}

% \bib, bibdiv, biblist are defined by the amsrefs package.
\begin{bibdiv}
\begin{biblist}

\bib{ABF}{article}{
      author={Alikakos, Nicholas},
      author={Bates, Peter~W.},
      author={Fusco, Giorgio},
       title={Slow motion for the {C}ahn-{H}illiard equation in one space
  dimension},
        date={1991},
     journal={J. Differential Equations},
      volume={90},
      number={1},
       pages={81\ndash 135},
}

\bib{num}{article}{
      author={Argentina, Mederic},
      author={Clerc, MG},
      author={Rojas, R},
      author={Tirapegui, E},
       title={Coarsening dynamics of the one-dimensional Cahn-Hilliard model},
        date={2005},
     journal={Physical Review E},
      volume={71},
      number={4},
       pages={046210},
}

\bib{BX1}{article}{
      author={Bates, Peter~W.},
      author={Xun, Jian~Ping},
       title={Metastable patterns for the {C}ahn-{H}illiard equation. {I}},
        date={1994},
     journal={J. Differential Equations},
      volume={111},
      number={2},
       pages={421\ndash 457},
}

\bib{BX2}{article}{
      author={Bates, Peter~W.},
      author={Xun, Jian~Ping},
       title={Metastable patterns for the {C}ahn-{H}illiard equation. {II}.
  {L}ayer dynamics and slow invariant manifold},
        date={1995},
     journal={J. Differential Equations},
      volume={117},
      number={1},
       pages={165\ndash 216},
}

\bib{BH}{article}{
      author={Bronsard, Lia},
      author={Hilhorst, Danielle},
       title={On the slow dynamics for the {C}ahn-{H}illiard equation in one
  space dimension},
        date={1992},
     journal={Proc. Roy. Soc. London Ser. A},
      volume={439},
      number={1907},
       pages={669\ndash 682},
}

\bib{BK}{article}{
      author={Bronsard, Lia},
      author={Kohn, Robert~V.},
       title={On the slowness of phase boundary motion in one space dimension},
        date={1990},
     journal={Comm. Pure Appl. Math.},
      volume={43},
      number={8},
       pages={983\ndash 997},
}

\bib{CP}{article}{
      author={Carr, J.},
      author={Pego, R.~L.},
       title={Metastable patterns in solutions of
  {$u_t=\epsilon^2u_{xx}-f(u)$}},
        date={1989},
     journal={Comm. Pure Appl. Math.},
      volume={42},
      number={5},
       pages={523\ndash 576},
}

\bib{C}{article}{
      author={Chen, Xinfu},
       title={Generation, propagation, and annihilation of metastable
  patterns},
        date={2004},
     journal={J. Differential Equations},
      volume={206},
      number={2},
       pages={399\ndash 437},
}

\bib{EF}{article}{
      author={Elliott, Charles~M.},
      author={French, Donald~A.},
       title={Numerical studies of the {C}ahn-{H}illiard equation for phase
  separation},
        date={1987},
     journal={IMA J. Appl. Math.},
      volume={38},
      number={2},
       pages={97\ndash 128},
}

\bib{FH}{article}{
      author={Fusco, G.},
      author={Hale, J.~K.},
       title={Slow-motion manifolds, dormant instability, and singular
  perturbations},
        date={1989},
     journal={J. Dynam. Differential Equations},
      volume={1},
      number={1},
       pages={75\ndash 94},
}

\bib{G}{article}{
      author={Grant, Christopher~P.},
       title={Slow motion in one-dimensional {C}ahn-{M}orral systems},
        date={1995},
     journal={SIAM J. Math. Anal.},
      volume={26},
      number={1},
       pages={21\ndash 34},
}

\bib{OR}{article}{
      author={Otto, Felix},
      author={Reznikoff, Maria~G.},
       title={Slow motion of gradient flows},
        date={2007},
     journal={J. Differential Equations},
      volume={237},
      number={2},
       pages={372\ndash 420},
}

\bib{OSW}{article}{
	author={Otto, Felix},
	author={Scholtes, Sebastian},
	author={Westdickenberg, Maria G.},
	journal={work in progress},
	title={{Relaxation to equilibrium in the one-dimensional Cahn--Hilliard equation, Part II}},
	year={2017},
}

\bib{OW}{article}{
      author={Otto, Felix},
      author={Westdickenberg, Maria~G.},
       title={Relaxation to equilibrium in the one-dimensional
  {C}ahn-{H}illiard equation},
        date={2014},
     journal={SIAM J. Math. Anal.},
      volume={46},
      number={1},
       pages={720\ndash 756},
}

\bib{SW}{article}{
      author={Sun, Xiaodi},
      author={Ward, Michael~J.},
       title={Dynamics and coarsening of interfaces for the viscous
  {C}ahn-{H}illiard equation in one spatial dimension},
        date={2000},
     journal={Stud. Appl. Math.},
      volume={105},
      number={3},
       pages={203\ndash 234},
}

\end{biblist}
\end{bibdiv}

\Addresses

\end{document}